%% This is file `elsarticle-template-1a-num.tex',
%%
%% Copyright 2009 Elsevier Ltd
%%
%% This file is part of the 'Elsarticle Bundle'.
%% ---------------------------------------------
%%
%% It may be distributed under the conditions of the LaTeX Project Public
%% License, either version 1.2 of this license or (at your option) any
%% later version.  The latest version of this license is in
%%    http://www.latex-project.org/lppl.txt
%% and version 1.2 or later is part of all distributions of LaTeX
%% version 1999/12/01 or later.
%%
%% The list of all files belonging to the 'Elsarticle Bundle' is
%% given in the file `manifest.txt'.
%%
%% Template article for Elsevier's document class `elsarticle'
%% with numbered style bibliographic references
%%
%% $Id: elsarticle-template-1a-num.tex 151 2009-10-08 05:18:25Z rishi $
%% $URL: http://lenova.river-valley.com/svn/elsbst/trunk/elsarticle-template-1a-num.tex $
%%
%%\documentclass[preprint,12pt]{elsarticle}

%% Use the option review to obtain double line spacing
%%\documentclass[preprint,review,12pt]{elsarticle}

%% Use the options 1p,twocolumn; 3p; 3p,twocolumn; 5p; or 5p,twocolumn
%% for a journal layout:
%%\documentclass[final,1p,times]{elsarticle}
%% \documentclass[final,1p,times,twocolumn]{elsarticle}
\documentclass[final,3p,times]{elsarticle}
%% \documentclass[final,3p,times,twocolumn]{elsarticle}
%% \documentclass[final,5p,times]{elsarticle}
%% \documentclass[final,5p,times,twocolumn]{elsarticle}

%% if you use PostScript figures in your article
%% use the graphics package for simple commands
%% \usepackage{graphics}
%% or use the graphicx package for more complicated commands
%% \usepackage{graphicx}
%% or use the epsfig package if you prefer to use the old commands
%% \usepackage{epsfig}

%% The amssymb package provides various useful mathematical symbols
%%\usepackage{amssymb}
%% The amsthm package provides extended theorem environments
%% \usepackage{amsthm}
%%%%%%%%%%%%%%%%%%%%%%%%%%%%%%%%%%%%%%%%%%%%%%%%%%%%%%%%%%%%%%%%%%%%%%%%%%%%%%%%%%%%%%%%%%%%%%%%%%%%%%%%%%%%%%%%%%%%%%%%%%%%%%%%%%
%\usepackage{amscd,bbm,amsmath,amsthm,graphicx,amssymb,mathrsfs,latexsym,dsfont,amsfonts,paralist,epsfig,graphics,exscale}

\usepackage{amsthm,amsmath,amssymb,mathrsfs,amsfonts,graphicx,graphics,latexsym,exscale,cmmib57,dsfont,amscd,ulem}
\usepackage{mathrsfs}
\usepackage{fancybox}
%\usepackage{diagrams}
%\usepackage[small,nohug,heads=vee]{diagrams}
%\diagramstyle[labelstyle=\scriptstyle]
%%%%%%%%%%%%%%%%%%%%%%%%%%%%%%
\usepackage{color,soul}
\usepackage[colorlinks=true]{hyperref}
%%%%%%%%%%%%%%%%%%%%%%%%%%%%%%%%%%%%%%%%%%%%%%%%%%%%%%%%%%%%%%%%%%%%%%%%%%%%%%%%%%%%%%%%%%%%%%%%%%%%%%%%%%%%%%%%%%%%%%%%%%%%%%%%%%%
%% The lineno packages adds line numbers. Start line numbering with
%% \begin{linenumbers}, end it with \end{linenumbers}. Or switch it on
%% for the whole article with \linenumbers after \end{frontmatter}.
%% \usepackage{lineno}

%% natbib.sty is loaded by default. However, natbib options can be
%% provided with \biboptions{...} command. Following options are
%% valid:

%%   round  -  round parentheses are used (default)
%%   square -  square brackets are used   [option]
%%   curly  -  curly braces are used      {option}
%%   angle  -  angle brackets are used    <option>
%%   semicolon  -  multiple citations separated by semi-colon
%%   colon  - same as semicolon, an earlier confusion
%%   comma  -  separated by comma
%%   numbers-  selects numerical citations
%%   super  -  numerical citations as superscripts
%%   sort   -  sorts multiple citations according to order in ref. list
%%   sort&compress   -  like sort, but also compresses numerical citations
%%   compress - compresses without sorting
%%
%% \biboptions{comma,round}

% \biboptions{}
%%%%%%%%%%%%%%%%%%%%%%%%%%%%%%%%%%%%%%%%%%%%%%%%%%%%%%%%%%%%%%
\frenchspacing
%%%%%%%%%%%%%%%%%%%%%%%%%%%

\newcommand{\Z}{\boldsymbol{Z}}
\newcommand{\X}{\boldsymbol{X}}
\newcommand{\Y}{\boldsymbol{Y}}

\newcommand{\B}{\boldsymbol{B}}

\newcommand{\bea}{\begin{eqnarray}}
\newcommand{\eea}{\end{eqnarray}}
\newcommand{\bean}{\begin{eqnarray*}}
\newcommand{\eean}{\end{eqnarray*}}
%%%%%%%%%%%%%%%%%%%%%%%%%%%
\newtheorem*{Fthm}{Furstenberg's Structure Theorem}
\newtheorem*{FK}{Furstenberg-Katznelson Structure Theorem}
\newtheorem*{Thm0.2}{Theorem~0.2}

\newtheorem{Thm}{Theorem}[section]
\newtheorem{cor}[Thm]{Corollary}
\newtheorem{prop}[Thm]{Proposition}
\newtheorem{Lem}[Thm]{Lemma}

\theoremstyle{definition}
\newtheorem*{rem}{Remark}
\newtheorem{defn}[Thm]{Definition}
\newtheorem*{defn0.1}{Definition~0.1}
\newtheorem{remark}[Thm]{Remark}

\newtheorem*{Step1}{Step 1}
\newtheorem*{Step2}{Step 2}
\newtheorem*{Claim}{Assertion}

\numberwithin{equation}{section}

%%%%%%%%%%%%%%%%%%%%%%%%%%%%%%%%%
%%\setcounter{section}{-1}
%%%%%%%%%%%%%%%%%%%%%%%%%%%%%%%%%%%%%%%%%%%%%%%%%%%%%%%%%%%%%%
\journal{xxx}%%%
%%\journal{Linear Algebra and its Applications}%

\begin{document}
\begin{frontmatter}

\title{An extension of Furstenberg's structure theorem for Noetherian modules and multiple recurrence theorems I}

%    Information for first author
%\author[C]{Goong Chen}
%\ead{gchen@math.tamu.edu}
\author{Xiongping Dai}
\ead{xpdai@nju.edu.cn}
%    Information for second author
%\author[TH]{Tingwen Huang}
%\ead{tingwen.huang@qatar.tamu.edu}
%\author[YH]{Yu Huang}
%\ead{stshyu@mail.sysu.edu.cn}
%\address[C]{Department of Mathematics, Texas A$\&$M University, College Station, TX 77843, USA}
\address{Department of Mathematics, Nanjing University, Nanjing 210093, People's Republic of China}
%\address[TH]{Department of Mathematics, Texas A$\&$M University at Qatar, PO Box 23874, Doha, Qatar}
%\address[YH]{Department of Mathematics, Zhongshan (Sun Yat-Sen) University, Guangzhou 510275, People's Republic of China}
%%%%%%%%%%%%%%%%%%%%%%%%%%%%%%%%%%%%%%%%%%%%%%%%%

\begin{abstract}
We extend Furstenberg's structure theorem to any standard Borel $G$-space, where $G$ is any locally compact second countable Noetherian module over a syndetic ring.
\end{abstract}

\begin{keyword}
Furstenberg theory $\cdot$ Structure theorem $\cdot$ Noetherian module.

\medskip
\MSC[2010] Primary 37A15\sep 37A45 Secondary 37B20\sep 37P99\sep 22F05
\end{keyword}
\end{frontmatter}
%\tableofcontents
%%%%%%%%%%%%%%%%%%%%%%%%%%%%%%%%%%%%%%%%%%
%%%%%%%%%%%%%%%%%%%%%%%%%%%%%%%%%%%%%%%%%%
%%\begin{diagram}
%%  &         & c  & \\
%%  &\ruTo^{\pi_1} &\dTo^{\pi_2 } \\
%%a &\rTo(4,2)^\pi         &b
%%\end{diagram}
%%\begin{diagram}
%%  &         & c  & \\
%%  &\ruTo^{\pi_1}& &\rdTo^{\pi_2 } \\
%%a &&   \rTo(4,2)^\pi      &&b
%%\end{diagram}

\section*{0.\ Introduction}\label{sec0}%%%
We start this paper with recalling the celebrated Furstenberg structure theorem, which asserts the following statement:

\begin{Fthm}[1963~\cite{F63}]
Let $G\curvearrowright X$ be a topologically action of a locally compact group $G$ on a compact metric space $X$, which is minimal distal. Then there exists an ordinal $\eta$ such that to each ordinal $\xi\le\eta$ there is associated a factor $\pi_\xi\colon (X,G)\rightarrow(X_\xi,G)$ so that the followings are satisfied:
\begin{enumerate}
\item[$(a)$] $(X_0,G)$ is the one-point $G$-system and $(X_\eta,G)=(X,G)$.
\item[$(b)$] If $0\le\xi<\theta\le\eta$, then there is a factor map $\pi_{\theta,\xi}\colon(X_{\theta},G)\rightarrow(X_\xi,G)$ with $\pi_\xi=\pi_{\theta,\xi}\circ\pi_\theta$.
\item[$(c)$] For each $\xi<\eta$, $\pi_{\xi+1,\xi}\colon(X_{\xi+1},G)\rightarrow(X_\xi,G)$ is an isometric extension.
\item[$(d)$] If $\theta$ is a limit ordinal $\le\eta$, then $\underleftarrow{\lim}_{\xi<\theta}(X_\xi,G)$.
\end{enumerate}
\end{Fthm}
\noindent
The above statement itself and Furstenberg's original proof both are purely topological, not involving any probability theory~\cite{F63}. There are other structure theorems to attempt to generalize or simply prove Furstenberg's in the literature (cf.,~e.g., \cite{Vee,Ell,Z76,Zim82,AT,BF,NZ,NZ02}).
Particularly, we will be concerned with the following important and convenient version of Furstenberg structure theorem:

\begin{FK}[1978 \cite{FK}]
Let $(X,\mathscr{B}_X,\mu)$ be a regular $\mathbb{Z}^d$-space, where $1\le d<\infty$. Then there exists an ordinal $\eta$ and a system of $\mathbb{Z}^d$-factors
\begin{gather*}
\big{\{}\pi_\xi\colon\X=(X,\mathscr{B}_X,\mu,\mathbb{Z}^d)\rightarrow\X_\xi=(X_\xi,\mathscr{X}_\xi,\mu_\xi,\mathbb{Z}^d)\big{\}}_{\xi\le\eta}
\end{gather*}
such that:
\begin{enumerate}
\item[$(1)$] $\X_0$ is the one-point $\mathbb{Z}^d$-system and $\X_\eta=\X$ ($\mu$-$\mathrm{mod}$ $0$).
\item[$(2)$] If $\theta<\xi\le\eta$, then there is a factor $\mathbb{Z}^d$-map $\pi_{\xi,\theta}\colon\X_\xi\rightarrow\X_\theta$ with $\pi_\theta=\pi_{\xi,\theta}\circ\pi_\xi$.
\item[$(3)$] For each $\xi<\eta$, $\pi_{\xi+1,\xi}\colon\X_{\xi+1}\rightarrow\X_\xi$ is a primitive extension.
\item[$(4)$] If $\xi$ is a limit ordinal $\le\eta$, then $\X_\xi=\underleftarrow{\lim}_{\theta<\xi}\X_\theta$.
\end{enumerate}
We refer to this system of factors
$$\X\rightarrow\dotsm\rightarrow\X_{\xi+1}\rightarrow\X_\xi\rightarrow\dotsm\rightarrow\X_1\rightarrow\X_0,$$
possibly transfinite, as a ``composition factors series'' of the regular $\mathbb{Z}^d$-system $\X$.
\end{FK}

Since $\mathbb{R}$ is not a free abelian group, the above two theorems do not include the very important $C^0$-flow.
This paper will be devoted to developing Furstenberg Theory for measure-preserving dynamical systems of Noetherian-module actions far beyond the $\mathbb{Z}$- or $\mathbb{Z}^d$-spaces.

\subsection*{0.1.\ Noetherian modules and syndetic rings}\label{sec0.1}%%%
Let $(R,+,\cdot)$ be a (not necessarily commutative) ring with the zero element $0$ for the commutative addition $+$, the identity element $1$ for the (not necessarily commutative) multiplication $\cdot$.

Let $(G,\circ)$, with the identity $I$, be an \textit{$R$-module}. By this we mean a group such that there exists a (left) scalar multiplication $(t,g)\mapsto tg$ from $R\times G$ to $G$
%, writing for our convenience as
%\begin{itemize}
%\item $g^t=tg\ \forall t\in R, g\in G$
%\end{itemize}
satisfying the property that for any two elements $S,T$ in $G$ and any two scalars $r,t$ in $R$,
\begin{itemize}
\item $t(S\circ T)=(tS)\circ(tT)$,\quad $(r+t)S=rS\circ tS$.
%\item $S^{r\cdot t}=(S^t)^r=(S^r)^t$.
\end{itemize}
It is easy to check that any $R$-module must be an abelian group. $(R,+)$ itself, by letting $(G,\circ)=(R,+)$ with the identity $I=0$, is of course an $R$-module. See \cite[$\S$III.1]{Lan}.

By a topological group we here mean a Hausdorff space with a continuous group structure. Further, an $R$-module $G$ is referred to as a \textit{locally compact second countable} (\textit{lcsc}) $R$-module in this paper if
\begin{itemize}
\item $(R,+)$ and $(G,\circ)$ both are locally compact second countable groups and
$(t,g)\mapsto tg$ is continuous from the product space $R\times G$ to the space $G$.
\end{itemize}
Moreover $(R,+,\cdot)$ is called an \textit{lcsc ring} if $(R,+)$ itself is a locally compact second countable $R$-module.

We note here that any \textit{lcsc} abelian group $(G,\circ)$ is an \textit{lcsc} $\mathbb{Z}$-module by defining the scalar multiplication of $\mathbb{Z}\times G$ to $G$: $(n,g)\mapsto g^n=g\circ\dotsm\circ g$ ($n$-powers), where $\mathbb{Z}$ is endowed with the discrete topology.

Let $G$ be an \textit{lcsc} $R$-module. By a \textit{standard Borel $G$-space}, we mean a standard Borel probability space $(X,\mathscr{X},\mu)$ together with a Borel $G$-action from left on $X$ by measure-preserving transformations of $(X,\mathscr{X},\mu)$; that is to say, every $g\in G$ is a measure-preserving transformation of $(X,\mathscr{X},\mu)$ to itself and the $G$-action map $(g,x)\mapsto g(x)$ is Borel measurable of $G\times X$ to $X$. We will simply write $\X=(X,\mathscr{X},\mu,G)$ later on. We should bear in mind that different group elements may define the same $\mu$-preserving transformation of $X$.

To precisely formulate our Furstenbergwise  structure theorem we will prove in this paper, we first need to introduce the basic notions/conditions.

\begin{defn0.1}\label{def0.1}%%%
\begin{enumerate}
\item[(1)]An $R$-module $G$ is said to be \textit{Noetherian} if it satisfies the ascending chain condition (ACC): for every sequence $G_0\subseteq G_1\subseteq G_2\subseteq\dotsm$ of $R$-submodules of $G$ we have $G_n=G_{n+1}$ for $n$ sufficiently large.
The ring $(R,+,\cdot)$ itself is called Noetherian if $(R,+)$ is just a Noetherian $R$-module over $(R,+,\cdot)$. See, e.g.,~\cite[$\S$VI.1]{Lan}.

\item[(2)]A subset $S$ of an \textit{lcsc} ring $(R,+,\cdot)$ is said to be \textit{syndetic} if one can find a compact subset $K$ of $R$ such that
$K+S=R$; cf.~e.g. \cite{GH, Fur}. Further $(R,+,\cdot)$ itself is called \textit{syndetic} if for each $r\in R$ with $r\not=0$, $rR$ is syndetic in $R$.
\end{enumerate}
\end{defn0.1}

For example, for $1\le n<\infty$ the $n$-dimensional lattice space $\mathbb{Z}^n$ thought of as a $\mathbb{Z}$-module and the $n$-dimensional euclidian space $\mathbb{R}^n$ as an $\mathbb{R}$-module both are Noetherian. In fact, every finitely generated module over a Noetherian ring is Noetherian (cf.~\cite[Proposition~VI.3]{Lan}). And by Hilbert's Basis Theorem it follows that the polynomial ring $\mathbb{F}[x_1,\dotsc,x_n]$ in $n$ variables over a field $\mathbb{F}$ is Noetherian and thus every finitely generated $\mathbb{F}[x_1,\dotsc,x_n]$-module is Noetherian (\cite[Theorem~VI.1]{Lan}).

Clearly, the integers ring $\mathbb{Z}$, the $p$-adic integers ring $\mathbb{Z}_p$, the rationals field $\mathbb{Q}$, the reals field $\mathbb{R}$ and the $p$-adic numbers field $\mathbb{Q}_p$ all are syndetic and Noetherian under the natural topologies and so all of the finitely generated topological $\mathbb{Q}$-modules, $\mathbb{R}$-modules, $\mathbb{Z}_p$-modules, and $\mathbb{Q}_p$-modules are Noetherian, which are \textit{lcsc} groups; see e.g. \cite{Lan}. Note that $(\mathbb{R}^n,+,\cdot)$ with $0=(0,\dotsc,0), 1=(1,\dotsc,1)$ is an \textit{lcsc} ring under the algebraic operations
\begin{gather*}
(x_1,\dotsc,x_n)+(y_1,\dotsc,y_n)=(x_1+y_1,\dotsc,x_n+y_n)\quad\textit{and}\quad (x_1,\dotsc,x_n)\cdot(y_1,\dotsc,y_n)=(x_1y_1,\dotsc,x_ny_n).
\end{gather*}
However, it is not a syndetic ring.
Particularly, it should be noted that $(\mathbb{R}^n,+)$ as an $\mathbb{R}$-module is Noetherian, but not as a $\mathbb{Z}$-module.

\subsection*{0.2.\ Structure theorem}\label{sec0.2}%%%
Motivated by \cite{F63, Z76, FK, Fur},  in this paper we shall mainly prove the following Furstenberg-wise structure theorem including $C^0$-flow, which generalizes the FK structure theorem stated before.

\begin{Thm0.2}[Structure Theorem~I]\label{thm0.2}%%%
Let $G$ be an \textit{lcsc} Noetherian $R$-module over a syndetic ring $(R,+,\cdot)$. Then for any nontrivial standard Borel $G$-space $\X$, there exists an ordinal $\eta$ and a system of $G$-factors $\left\{\pi_\xi\colon\X\rightarrow\X_\xi\right\}_{\xi\le\eta}$ such that:
\begin{enumerate}
\item[$(a)$] $\X_0$ is the one-point $G$-system and $\X_\eta=\X$ ($\mu$-$\mathrm{mod}$ $0$).
\item[$(b)$] If $0\le\theta<\xi\le\eta$, then there is a factor $G$-map $\pi_{\xi,\theta}\colon\X_\xi\rightarrow\X_\theta$ with $\pi_\theta=\pi_{\xi,\theta}\circ\pi_\xi$.
\item[$(c)$] For each ordinal $\xi$ with $0\le\xi<\eta$, $\pi_{\xi+1,\xi}\colon\X_{\xi+1}\rightarrow\X_\xi$ is a nontrivial ``primitive'' extension (cf.~Def.~\ref{def4.1} in $\S\ref{sec4.0}$).
\item[$(d)$] If $\xi$ is a limit ordinal $\le\eta$, then $\X_\xi=\underleftarrow{\lim}_{\theta<\xi}\X_\theta$.
\end{enumerate}
Moreover, the intermediate factors in our factors chain are of the form
\begin{gather*}
\X_\xi=(X,\mathscr{X}_\xi,\mu,G),\quad \{x\}\in\mathscr{X}_\xi\ \forall x\in X,\quad \pi_\xi=\textit{Id}_X\quad \textit{and}\quad \pi_{\xi+1,\xi}=\textit{Id}_X\quad (0<\xi<\eta).
\end{gather*}
We will call
\begin{gather*}
\X\rightarrow\dotsm\rightarrow\X_{\xi+1}\rightarrow\X_\xi\rightarrow\dotsm\rightarrow\X_1\rightarrow\X_0,
\end{gather*}
a ``Furstenberg factors chain'' of $\X$.
\end{Thm0.2}

This structure theorem claims that if regarding some dynamics, such as ``\textit{Sz}-'' and ``\textit{Kh}-properties'' we will consider in our subsequent paper~\cite{Dai-pre}, one interprets primitive extensions sufficiently broadly, then one can, by this procedure, describe all standard Borel $G$-systems acted by a Noetherian $R$-module.

As a byproduct Theorem~\ref{thm4.7} in $\S\ref{sec4.2}$ is another Zimmer-type structure theorem we will prove in this paper. The remainder of this paper will be organized as follows.

\tableofcontents
\subsection*{Acknowledgments}%%%
Finally, the author is deeply grateful to Professor Hillel Furstenberg for many helpful suggestions, comments, and carefully checking the details of the original manuscript.

\section{Basic dynamics notions}\label{sec1}%%%
In the sequel of this section, unless an explicit declaration, we let $G$ be an \textit{lcsc} $R$-module over any ring $(R,+,\cdot)$ not necessarily syndetic,
\begin{itemize}
\item $\X=(X,\mathscr{X},\mu,G)$ a standard Borel $G$-space so we may decompose $\mu$;
\end{itemize}
and let $(Y,\mathscr{Y},\nu)$ be another Borel $G$-space, not necessarily a standard Borel probability space, and simply write
\begin{itemize}
\item $\Y=(Y,\mathscr{Y},\nu,G)$.
\end{itemize}

We shall call $\X$ an \textit{extension} of $\Y$ or $\Y$ a \textit{factor} of $\X$ via a $G$-homomorphism $\pi$, usually written as $\pi\colon\X\rightarrow\Y$, if there is a measure-preserving map $\pi\colon(X,\mathscr{X},\mu)\rightarrow (Y,\mathscr{Y},\nu)$ satisfying
$\pi(g(x))=g(\pi(x))$ for all $g\in G$ and $x\in X$; i.e., the following commutative diagram holds:
$$
\begin{CD}
(X,\mathscr{X},\mu)@>{g}>>(X,\mathscr{X},\mu)\\
@V{\pi}VV @V{\pi}VV\\
(Y,\mathscr{Y},\nu)@>{g}>>(Y,\mathscr{Y},\nu)
\end{CD}\qquad \forall g\in G.
$$
Here $\pi$ is called a \textit{factor $G$-map} from $\X$ to $\Y$. Note here that $x\mapsto g(x)$ and $y\mapsto g(y)$ refer to the $G$-actions of the same $G$ on two different state spaces $X$ and $Y$.

An extension $\pi\colon\X\rightarrow\Y$ is said to be \textit{nontrivial} if $\pi^{-1}[\mathscr{Y}]\not=\mathscr{X}$ ($\mu\textrm{-mod }0$). In fact, $(Y,\mathscr{Y},\nu,G;\pi)=(X,\mathscr{Y},\mu,G;\textit{Id}_X)$ in our factors chain in Theorem~0.2.

For any $T\in G$, $\langle T\rangle_R$ will stand for the $R$-submodule of $G$ generated by the element $T$ over the same ring $(R,+,\cdot)$; that is to say,
\begin{gather*}
\langle T\rangle_R=\{tT; t\in R\}
\end{gather*}
that is the smallest $R$-submodule containing $T$ in $G$. Note that $\langle I\rangle_R=\{I\}$ and $0T=I$ for any $T\in G$, where we remind that $I$ is the identity of $G$.
%%%%%%%%%%%%%%%%%%%%%%%%%%%%%%%%%%%%%%%%%%%%%%%%%%%%%
\subsection{Totally relatively ergodic extensions}%%%
Let $\pi\colon\X\rightarrow\Y$ be an extension. From now on, let $\mathscr{X}^\prime$ be a $\sigma$-subalgebra of $\mathscr{X}$, where $(X,\mathscr{X}^\prime,\mu)$ is not necessarily a standard Borel probability space, such that
\begin{itemize}
\item $\pi^{-1}[\mathscr{Y}]\subseteq\mathscr{X}^\prime$, $\mathscr{X}^\prime$ is $G$-invariant (i.e. $g^{-1}[\mathscr{X}^\prime]\subseteq\mathscr{X}^\prime\ \forall g\in G$),
\end{itemize}
and write
\begin{itemize}
\item $\X^\prime=(X,\mathscr{X}^\prime,\mu, G)$.
\end{itemize}
Then $\X^\prime$ is a factor of $\X$ via the $G$-map $\textit{Id}_X\colon X\rightarrow X;\ x\mapsto x$ and it is also an extension of $\Y$ via the same $\pi\colon X\rightarrow Y$. We will call $\X\xrightarrow[]{\textit{Id}_X}\X^\prime\xrightarrow[]{\pi}\Y$ a short factors series.

\begin{defn}\label{def1.1}%%%
For $\X^\prime=(X,\mathscr{X}^\prime,\mu,G)$, we say the extension $\pi\colon\X^\prime\rightarrow\Y$ is to be
\begin{itemize}
\item \textit{relatively ergodic for an element $g$} in $G$ if every $\langle g\rangle_R$-invariant $f\in\mathfrak{L}^2(X,\mathscr{X}^\prime,\mu)$ is ($\mu$-\textit{a.e.}) a function on $\Y$ (or equivalently, every $\langle g\rangle_R$-invariant $f\in\mathfrak{L}^\infty(X,\mathscr{X}^\prime,\mu)$ is also $\pi^{-1}[\mathscr{Y}]$-measurable);
\begin{itemize}
\item further, to be \textit{totally relatively ergodic for an $R$-submodule $\varGamma$} of $G$ if it is relatively ergodic for each element $g$ in $\varGamma$ with $g\not=I$.
\end{itemize}
\item \textit{jointly relatively ergodic for an $R$-submodule $\varGamma$} of $G$ if every $\varGamma$-invariant $f\in\mathfrak{L}^\infty(X,\mathscr{X}^\prime,\mu)$ is ($\mu$-\textit{a.e.}) a function on $\Y$.
\end{itemize}
\end{defn}

Notice here that $\pi\colon\X^\prime\rightarrow\Y$ fails to be totally relatively ergodic for $G$ itself if some set in $\mathscr{X}^\prime$ that is not the preimage of a set in $\mathscr{Y}$ is $\langle T\rangle_R$-invariant for some $T\in G$ with $T\not=I$. Ordinarily when one speaks of ergodicity of a Borel $G$-space $(X,\mathscr{X}^\prime,\mu)$, one is concerned about $G$-invariance, not only $\langle T\rangle_R$-, of sets in $\mathscr{X}^\prime$.

Particularly, only from the viewpoint of group in \cite{FK,Fur}, $\pi\colon\X^\prime\rightarrow\Y$ fails to be (totally) relatively ergodic for $G$ if some set in $\mathscr{X}^\prime$ that is not the preimage of a set in $\mathscr{Y}$ is $T$ or equivalently $\langle T\rangle_\mathbb{Z}$, but not $\langle T\rangle_R$, -invariant for some $T\in G$ with $T\not=I$.

If $\Y$ is a one-point $G$-system, then the relative ergodicity of $\X^\prime$ for $g$ is just the classical ergodicity of the metric dynamical system $(X,\mathscr{X}^\prime,\mu, \langle g\rangle_R)$ for the $R$-submodule $\langle g\rangle_R$.
%%%%%%%%%%%%%%%%%%%%%%%%%%%%%%%%%%%%%%%%%%%%%%%%%%%%%%%%%%%%%%%
\subsection{Relative-product extensions}\label{sec1.2}%%%%
For any extension $\pi\colon\X\rightarrow\Y$ (not for $\pi\colon\X^\prime\rightarrow\Y$), by Doob's theorem, there exists a random measure on the standard Borel probability space $(X,\mathscr{X},\mu)$:
\begin{gather*}
\mu(\centerdot,\centerdot)\colon Y\times\mathscr{X}\rightarrow\mathbb{R}\quad\textrm{or write}\quad \{\mu_y\colon\mathscr{X}\rightarrow\mathbb{R}\}_{y\in Y}
\end{gather*}
such that for any $y\in Y, B\in\mathscr{X}$ and $\varphi\in\mathfrak{L}^1(X,\mathscr{X},\mu)$,
\begin{enumerate}
\item[(1)] $\mu_y(\centerdot)\colon\mathscr{X}\rightarrow\mathbb{R};\ A\mapsto\mu(y,A)$ is a probability measure on $(X,\mathscr{X})$;
\item[(2)] $\mu(\centerdot,B)\colon Y\rightarrow\mathbb{R};\ y\mapsto\mu(y,B)$ is a $\mathscr{Y}$-measurable function;
\item[(3)] $\mu_{\pi(x)}(B)=E_\mu(1_B|\pi^{-1}[\mathscr{Y}])(x)$ for $\mu$-\textit{a.e.} $x\in X$;
\item[(4)] $E_\mu(\varphi|\pi^{-1}[\mathscr{Y}])(x)=\int_X\varphi(z)\mu(\pi(x),dz)$ for $\mu$-\textit{a.e.} $x\in X$.\footnote{In fact, $(3)\Leftrightarrow(4)$. In addition, we set $E_\mu(\varphi|\Y)(\centerdot)=\int_X\varphi(x)\mu(\centerdot,dx)\in\mathfrak{L}^1(Y,\mathscr{Y},\nu)$ as in \cite[Chap.~5]{Fur}. In addition, if $(X,\mathscr{X},\mu)$ is not a standard Borel probability space, then such $\mu(\centerdot,\centerdot)$ does not need to exist (cf., e.g.,~\cite{Die}). This is just the reason why we will always regard $\X^\prime$ as a factor of $\X$, not individually an extension of $\Y$.}
\end{enumerate}
Here $\{\mu_y\}_{y\in Y}$ or write $\mu=\int_Y\mu_y\nu(dy)$ is called the \textit{disintegration} of $\mu$, over $\X\xrightarrow[]{\pi}\Y$, which is unique for $\nu$-\textit{a.e.} $y\in Y$. If $\mathscr{Y}$ is countably generated, then $\mu_y$ is supported on the $\pi$-preimage of the atom $[y]_\mathscr{Y}$ and $\mu_y=\mu_z$ for any $z\in[y]_\mathscr{Y}$.

%Over the factor $\Y$ and the $G$-map $\pi$, simply saying over $(\Y,\pi)$, we define the `fiber-product' space $X\times X$ as follows:
%\begin{gather*}
%X\times X={\bigcup}_{y\in Y}\pi^{-1}([y]_\mathscr{Y})\times\pi^{-1}([y]_\mathscr{Y})
%\end{gather*}
%on which we can define the $\sigma$-algebra as
%\begin{gather*}
%\mathscr{X}\otimes_{\Y}\mathscr{X}=\mathscr{X}\otimes\mathscr{X}|X\times X.
%\end{gather*}
Following \cite{F77, FK} and \cite[$\S5.5$]{Fur}, the \textit{condition-independent measure} $\mu\otimes_{\Y}\mu$ on $(X\times X,\mathscr{X}\otimes\mathscr{X})$ over $(\Y,\pi)$ is defined by the disintegration
\begin{gather*}
\mu\otimes_{\Y}\mu=\int_Y\mu_y\otimes\mu_y\nu(dy).
\end{gather*}
Then by \cite[Proposition~5.10]{Fur},
\begin{gather*}
\int_{X\times X}f_1\otimes f_2d\mu\otimes_{\Y}\mu=\int_YE_\mu(f_1|\Y)E_\mu(f_2|\Y)d\nu\quad\forall f_1,f_2\in\mathfrak{L}^2(X,\mathscr{X},\mu).
\end{gather*}
Clearly $\mu\otimes_{\Y}\mu$ is $G$-invariant where $G$ acts diagonally on $X\times X$ by the following standard way:
$$g(x,x^\prime)=(g(x),g(x^\prime))\quad \forall (x,x^\prime)\in X\times X\textrm{ and }g\in G;$$
hence $(X\times X,\mathscr{X}\otimes \mathscr{X},\mu\otimes_{\Y}\mu,G)$ is a measure-preserving $G$-system, which is called the \textit{relative-product extension of $(\Y,\pi)$}, write $(X,\mathscr{X},\mu,G)\times_{\Y}(X,\mathscr{X},\mu,G)$ or simply $\X\times_{\Y}\X$. Then $\{\mu_y\otimes\mu_y\}_{y\in Y}$ is exactly the classical disintegration of $\mu\otimes_{\Y}\mu$, over the factor $\Y$ via the $G$-map\footnote{It should be noted that $\pi_1^{-1}[A]=\pi_2^{-1}[A]\ (\mu\otimes_{\Y}\mu\textrm{-mod }0)$ for any $A\in\mathscr{Y}$; cf.~\cite[Proposition~5.11]{Fur}.}
\begin{gather*}
\pi\times_{\Y}\pi\colon (x,x^\prime)\mapsto\pi_1(x)=x\quad \textit{or}\quad \pi\times_{\Y}\pi\colon (x,x^\prime)\mapsto\pi_2(x^\prime)=x^\prime.
\end{gather*}
Then
\begin{gather*}
E_{\mu\otimes_{\Y}\mu}(f_1\otimes f_2|\Y)=E_\mu(f_1|\Y)E_\mu(f_2|\Y)\quad\forall f_1,f_2\in\mathfrak{L}^2(X,\mathscr{X},\mu).
\end{gather*}
See \cite[Propositions~5.12 and 5.14]{Fur}.

%It should be noted that the condition that $\{y\}\in\mathscr{Y}$ is necessary for the property that $\mu\otimes_{\Y}\mu$ is supported on $X\times X$. For example, let %$\Y=(X,\mathscr{X}_G,\mu)$ and $\pi\colon x\mapsto x$ from $X$ to $Y=X$ where
%\begin{gather*}
%\mathscr{X}_G=\{A\in\mathscr{X}\,|\,g^{-1}(A)=A\ \forall g\in G\}.
%\end{gather*}
%If $\{\mu_x\}_{x\in X}$ is the ergodic disintegration of an ergodic system $\X=(X,\mathscr{X},\mu,G)$, then $\mu\otimes_{\Y}\mu=\mu\otimes\mu$ does not distributed on %$X\times X=\varDelta(X)$ for $\mu_x=\mu$ for $\mu$-\textit{a.e.} $x\in X$.

Let $\{U_g\colon \phi\mapsto\phi\circ g\}_{g\in G}$ be the naturally induced Koopman unitary operators on the following $\mathfrak{L}^2$-spaces of complex-valued functions:
\begin{gather*}
\mathscr{H}=\mathfrak{L}^2(X,\mathscr{X},\mu),\quad \mathscr{H}\otimes_{\Y}\mathscr{H}=\mathfrak{L}^2(X\times X,\mathscr{X}\otimes\mathscr{X},\mu\otimes_{\Y}\mu),\intertext{and}
\mathscr{H}_y=\mathfrak{L}^2(X,\mathscr{X},\mu_y),\quad \mathscr{H}_y\otimes_{\Y}\mathscr{H}_y=\mathfrak{L}^2(X\times X,\mathscr{X}\otimes\mathscr{X},\mu_y\otimes\mu_y),\quad \forall y\in Y.
\end{gather*}
For any $\varphi\in\mathscr{H}$ or $\mathscr{H}\otimes_{\Y}\mathscr{H}$, we denote its $\mathfrak{L}^2$-norm by $\|\varphi\|_2$ (or by $\|\varphi\|_{2,\mu}, \|\varphi\|_{2,\mu\otimes_{\Y}\mu}$); if $\varphi\in\mathscr{H}_y$ or $\mathscr{H}_y\otimes_{\Y}\mathscr{H}_y$, we denote its $\mathfrak{L}^2$-norm by $\|\varphi\|_{2,y}$ or $\|\varphi\|_{2,\mu_y\otimes\mu_y}$. We say that $\varphi\in\mathscr{H}$ on $X$ or $\mathscr{H}\otimes_{\Y}\mathscr{H}$ on $X\times X$ is \textit{fiberwise $\mathfrak{L}^2$-bounded} if $\|\varphi\|_{2,y}$ or $\|\varphi\|_{2,\mu_y\otimes\mu_y}$ is bounded as a function of $y$ on $Y$.

We note that if $\phi,\psi\in\mathscr{H}$ then $\phi\otimes\psi\in \mathfrak{L}^1(X\times X,\mathscr{X}\otimes\mathscr{X},\mu\otimes_{\Y}\mu)$, since $\|\phi\|_{2,y}$ and $\|\psi\|_{2,y}$ as functions of the variable $y$ on $Y$ are in $\mathfrak{L}^2(Y,\mathscr{Y},\nu)$ and
\begin{equation*}\begin{split}
\int_{X\times X}|\phi\otimes\psi(x,x^\prime)|\mu\otimes_{\Y}\mu(d(x,x^\prime))&=\int_Y\int_{X\times X}|\phi\otimes\psi(x,x^\prime)|\mu_y\otimes\mu_y(d(x,x^\prime))\nu(dy)\\
&=\int_Y\left(\int_X|\phi(x)|\mu_y(dx)\int_X|\psi(x^\prime)|\mu_y(x^\prime)\right)\nu(dy)\\
&\le\int_Y\|\phi\|_{2,y}\|\psi\|_{2,y}\nu(dy)\\
&\le\|\phi\|_{2,\mu}\|\psi\|_{2,\mu}
\end{split}\end{equation*}
by
\begin{equation*}
\|\phi\|_{2,\mu}=\left(\int_Y\|\phi\|_{2,y}^2\nu(dy)\right)^{\frac12}\quad \textrm{and}\quad \|\phi\|_{2,y}=\left(E_\mu(|\phi|^2|\Y)(y)\right)^{\frac12}=\left(\int_X|\phi(x)|^2\mu_y(dx)\right)^{\frac12}
\end{equation*}
for $\nu$-\textit{a.e.} $y\in Y$.

Since $(X,\mathscr{X})$ is a standard Borel $G$-space and $G$ is \textit{lcsc}, by Varadarajan's isomorphism theorem~\cite[Theorem~3.2]{Var} or \cite[Theorem~2.1.19]{Zim} that is independent of the module structure of $G$, it follows that $(X,\mathscr{X})$ is $G$-isomorphic to some $G$-invariant Borel subset of certain compact metric $G$-space. Thus combined with the $G$-invariance of $\mu$ this yields the well-known fact that the function $g\mapsto U_g\phi$ of $G$ into $\mathscr{H}$ is continuous under the $\mathfrak{L}^2$-norm of $\mathscr{H}$ with respect to $\mu$, for any given $\phi$ in $\mathscr{H}$.

\begin{rem}
Given any short factors series $\X\xrightarrow[]{\textit{Id}_X}\X^\prime=(X,\mathscr{X}^\prime,\mu,G)\xrightarrow[]{\pi}\Y$, we will write
\begin{gather*}
\mathscr{H}^\prime=\mathfrak{L}^2(X,\mathscr{X}^\prime,\mu),\\
\mathscr{H}^\prime\otimes_{\Y}\mathscr{H}^\prime=\mathfrak{L}^2(X\times X,\mathscr{X}^\prime\otimes\mathscr{X}^\prime,\mu\otimes_{\Y}\mu),\intertext{and for all $y\in Y$}
\mathscr{H}_y^\prime=\mathfrak{L}^2(X,\mathscr{X}^\prime,\mu_y),\\
\mathscr{H}_y^\prime\otimes_{\Y}\mathscr{H}_y^\prime=\mathfrak{L}^2(X\times X,\mathscr{X}^\prime\otimes\mathscr{X}^\prime,\mu_y\otimes\mu_y).
\end{gather*}
Then, $\mathscr{H}^\prime, \mathscr{H}^\prime\otimes_{\Y}\mathscr{H}^\prime, \mathscr{H}_y^\prime$, and $\mathscr{H}_y^\prime\otimes_{\Y}\mathscr{H}_y^\prime$ are closed subspaces of $\mathscr{H}, \mathscr{H}\otimes_{\Y}\mathscr{H}, \mathscr{H}_y$, and $\mathscr{H}_y\otimes_{\Y}\mathscr{H}_y$, respectively.
It should be noted, however, that $\{\mu_y\}_{y\in Y}$ and then $\mu\otimes_{\Y}\mu$ are induced by $\pi\colon\X\rightarrow\Y$, never by $\pi\colon\X^\prime\rightarrow\Y$, for the intermediate factor $\X^\prime$ we consider later is not necessarily a standard Borel $G$-space.
\end{rem}
%%%%%%%%%%%%%%%%%%%%%%%%%%%%%%
\subsection{Relative convolutions of functions and precompactness}\label{sec1.3}%%%
In the sequel of this subsection, let there be any given a short factors series:
\begin{gather*}
\X\xrightarrow[]{\textit{Id}_X}\X^\prime=(X,\mathscr{X}^\prime,\mu,G)\xrightarrow[]{\pi}\Y.
\end{gather*}

Given any two functions $H\in\mathscr{H}^\prime\otimes_{\Y}\mathscr{H}^\prime$ and $\phi\in\mathscr{H}^\prime$, as in \cite{FK,Fur} for the special case $(R,+,\cdot)=(\mathbb{Z},+,\cdot)$, we now define the \textit{convolution}, written as $H*_{\Y}\phi$, of $H$ with $\phi$ relative to the factor $\pi\colon\X^\prime\rightarrow\Y$ by
\begin{equation}\label{eq1.1}%%5
H*_{\Y}\phi(x)=\int_XH(x,x^\prime)\phi(x^\prime)\mu_{\pi(x)}(dx^\prime).
\end{equation}
Since for $\nu$-\textit{a.e.} $y\in Y$ here $H(x,x^\prime)$ is a function in $\mathscr{H}_y^\prime\otimes_{\Y}\mathscr{H}_y^\prime$, so that for $\mu_y$-\textit{a.e.} $x\in X$ the integrand $H(x,\centerdot)\phi(\centerdot)$ in (\ref{eq1.1}) is the product of two functions of the variable $x^\prime\in X$ in $\mathscr{H}_y^\prime$, and hence $H(x,\centerdot)\phi(\centerdot)\in \mathfrak{L}^1(X,\mathscr{X}^\prime,\mu_{\pi(x)})$ by Fubini's theorem and so the integral exists. Thus (\ref{eq1.1}) is well defined for $\mu$-\textit{a.e.} $x\in X$, for $\mu=\int_Y\mu_y\nu(dy)$. Moreover,
\begin{equation}\label{eq1.2}%%5
\|H*_{\Y}\phi\|_{2,y}\le\|H\|_{2,\mu_y\otimes\mu_y}\|\phi\|_{2,y}\quad \nu\textit{-a.e. }y\in Y.
\end{equation}
From this we can conclude that if $H$ is fiberwise $\mathfrak{L}^2$-bounded (i.e. $\|H\|_{2,\mu_y\otimes\mu_y}\le M<\infty$ for $\nu$-\textit{a.e.} $y\in Y$), then
\begin{equation*}
\|H*_{\Y}\phi\|_2\le M\|\phi\|_2
\end{equation*}
so that $H*_{\Y}\phi\in\mathscr{H}^\prime$ and further
\begin{gather*}
H*_{\Y}\centerdot\colon\phi\mapsto H*_{\Y}\phi
\end{gather*}
is a bounded linear operator of $\mathscr{H}^\prime$ into itself. On the other hand, if $\phi\in \mathfrak{L}^\infty(X,\mathscr{X}^\prime,\mu)$, then for any $H\in\mathscr{H}^\prime\otimes_{\Y}\mathscr{H}^\prime$
\begin{equation}\label{eq1.3}%%%
\begin{split}
\|H*_{\Y}\phi\|_2&\le\|\phi\|_\infty\left(\int_X\int_X|H(x,x^\prime)|^2\mu_{\pi(x)}(dx^\prime)\mu(dx)\right)^{1/2}\\
&=\|\phi\|_\infty\left(\int_Y\int_X\int_X|H(x,x^\prime)|^2\mu_{\pi(x)}(dx^\prime)\mu_y(dx)\nu(dy)\right)^{1/2}\\
&=\|\phi\|_\infty\left(\int_Y\left(\iint_{X\times X}|H|^2d\mu_y\otimes\mu_y\right)\nu(dy)\right)^{1/2}\\
&=\|\phi\|_\infty\|H\|_{2,\mu\otimes_{\Y}\mu}
\end{split}\end{equation}
so that
\begin{gather*}
\Box*_{\Y}\phi\colon H\mapsto H*_{\Y}\phi
\end{gather*}
is a bounded operator of $\mathscr{H}^\prime\otimes_{\Y}\mathscr{H}^\prime$ into $\mathscr{H}^\prime$.

As usual, a subset $A$ in a topological space $M$ is called \textit{precompact} if its closure $\overline{A}$ relative to $M$ is compact. Recall that a subset $A$ in a complete metric space $M$ is precompact \textsl{iff} for any $\delta>0$ there are points $a_1,\dotsc,a_k\in A$ such that
$A$ is covered by the $k$ balls of centered at $a_i$ of radius $\delta$ and \textsl{iff} for any $\delta>0$ there are points $a_1,\dotsc,a_k\in M$ such that
$A$ is covered by the $k$ balls of centered at $a_i$ of radius $\delta$. If $A$ is precompact in a Hausdorff space, then any subset of $A$ is also precompact.

The following is a criterion of precompactness of subset of $(\mathfrak{L}^2(X,\mathscr{X}^\prime,\mu),\|\cdot\|_{2,y})$, which is of interest itself.

\begin{Thm}\label{thm1.2}%%%
Given any short factors series $\X\xrightarrow[]{\textit{Id}_X}\X^\prime\xrightarrow[]{\pi}\Y$, consider the relative convolution linear operator on $\left(\mathscr{H}^\prime\otimes_{\Y}\mathscr{H}^\prime\right)\times\mathscr{H}^\prime$ to $\mathscr{H}^\prime$:
\begin{gather*}
\Box{\Large{*}_{\Y}}\centerdot\colon \mathfrak{L}^2(X\times X,\mathscr{X}^\prime\otimes\mathscr{X}^\prime,\mu\otimes_{\Y}\mu)\times \mathfrak{L}^2(X,\mathscr{X}^\prime,\mu)\longrightarrow \mathfrak{L}^2(X,\mathscr{X}^\prime,\mu)
\end{gather*}
given by $(H,\phi)\mapsto H*_{\Y}\phi$ as in $(\ref{eq1.1})$; and for any $r>0$ and $H\in\mathscr{H}^\prime\otimes_{\Y}\mathscr{H}^\prime$, write
\begin{gather*}
\B_r^{2,\infty}=\left\{\phi\in \mathfrak{L}^2(X,\mathscr{X}^\prime,\mu)\colon\|\phi\|_\infty\le r\right\}\quad \textrm{and}\quad H*_{\Y}\B_r^{2,\infty}=\left\{H*_{\Y}\phi\,|\,\phi\in\B_r^{2,\infty}\right\}.
\end{gather*}
Then for any $H\in\mathscr{H}^\prime\otimes_{\Y}\mathscr{H}^\prime$ and $r>0$, the set $H*_{\Y}\B_r^{2,\infty}$ is precompact in the Hilbert space $(\mathfrak{L}^2(X,\mathscr{X}^\prime,\mu),\|\cdot\|_{2,y})$ for $\nu$-\textit{a.e.} $y\in Y$.
\end{Thm}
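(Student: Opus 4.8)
The plan is to fix a generic fibre $y$, recognise $\phi\mapsto H*_\Y\phi$ (measured in the seminorm $\|\cdot\|_{2,y}$) as a Hilbert--Schmidt integral operator on $\mathscr{H}_y^\prime$, and invoke the compactness of such operators. Concretely: since $\|H\|_{2,\mu\otimes_\Y\mu}^2=\int_Y\|H\|_{2,\mu_y\otimes\mu_y}^2\,\nu(dy)$ is finite, the set $Y_0$ of $y\in Y$ for which $H$ represents a genuine element of $\mathscr{H}_y^\prime\otimes_\Y\mathscr{H}_y^\prime=\mathfrak{L}^2(X\times X,\mathscr{X}^\prime\otimes\mathscr{X}^\prime,\mu_y\otimes\mu_y)$ is $\nu$-conull; fix $y\in Y_0$. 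By properties $(3)$--$(4)$ of the disintegration, $\mu_{\pi(x)}=\mu_y$ for $\mu_y$-a.e.\ $x$, so for $\phi\in\mathscr{H}^\prime$ the globally defined function $H*_\Y\phi$ of $(\ref{eq1.1})$ agrees, $\mu_y$-a.e., with $T_y\phi$, where $T_y\colon\mathscr{H}_y^\prime\to\mathscr{H}_y^\prime$ is the integral operator with kernel $H\in\mathfrak{L}^2(\mu_y\otimes\mu_y)$ and $T_y\phi\in\mathscr{H}_y^\prime$ by $(\ref{eq1.2})$. Since precompactness of a set in $(\mathfrak{L}^2(X,\mathscr{X}^\prime,\mu),\|\cdot\|_{2,y})$ amounts to precompactness of its image in the quotient Hilbert space $\mathscr{H}_y^\prime$, it suffices to prove that $T_y\big(\B_r^{2,\infty}\big)$ is precompact in $\mathscr{H}_y^\prime$.

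Two observations finish this. First, since $\mu_y$ is a probability measure (and $\mu$-null sets are $\mu_y$-null for $\nu$-a.e.\ $y$), one has $\|\phi\|_{2,y}\le\|\phi\|_\infty\le r$ for every $\phi\in\B_r^{2,\infty}$, so $\B_r^{2,\infty}$ lies in the closed ball of radius $r$ of $\mathscr{H}_y^\prime$. Second, $T_y$ is a norm-limit of finite-rank operators: approximating $H$ in $\mathfrak{L}^2(\mu_y\otimes\mu_y)$ by a simple function $\widetilde H=\sum_{i=1}^n 1_{A_i}(x)1_{B_i}(x^\prime)$ with $A_i,B_i\in\mathscr{X}^\prime$, the associated operator $\widetilde T_y\phi=\sum_{i=1}^n\big(\int_{B_i}\phi\,d\mu_y\big)1_{A_i}$ has range in the finite-dimensional space $\mathrm{span}\{1_{A_1},\dots,1_{A_n}\}$, and $(\ref{eq1.2})$ gives $\|T_y-\widetilde T_y\|_{\mathrm{op}}\le\|H-\widetilde H\|_{2,\mu_y\otimes\mu_y}$. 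Hence $T_y$ is compact, so it maps the bounded set $\B_r^{2,\infty}$ into a precompact subset of $\mathscr{H}_y^\prime$, and precompactness passes to $T_y(\B_r^{2,\infty})$ itself. (Equivalently, this reproduces the total-boundedness criterion recalled before the theorem: cover the bounded subset $\widetilde T_y(\B_r^{2,\infty})$ of a finite-dimensional space by finitely many $\varepsilon$-balls, enlarge to $2\varepsilon$-balls via $\|T_y\phi-\widetilde T_y\phi\|_{2,y}\le r\|H-\widetilde H\|_{2,\mu_y\otimes\mu_y}<\varepsilon$, and use completeness of $\mathscr{H}_y^\prime$.)

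I expect the finite-rank/Hilbert--Schmidt computation to be entirely routine. The one point needing care is the measure-theoretic bookkeeping of the first step: that a single $\mathfrak{L}^2(\mu\otimes_\Y\mu)$-class $H$ descends, for $\nu$-a.e.\ $y$, to a well-defined kernel in $\mathfrak{L}^2(\mu_y\otimes\mu_y)$, and that the globally defined convolution $x\mapsto\int_X H(x,x^\prime)\phi(x^\prime)\,\mu_{\pi(x)}(dx^\prime)$ restricts correctly to $T_y\phi$ modulo $\mu_y$-null sets. Both follow from the disintegration identities $\mu\otimes_\Y\mu=\int_Y\mu_y\otimes\mu_y\,\nu(dy)$ and $\mu_{\pi(x)}(B)=E_\mu(1_B|\pi^{-1}[\mathscr{Y}])(x)$ recorded in $\S\ref{sec1.2}$, and this is exactly the place where it matters that $\X^\prime$ is treated as a factor of the \emph{standard} Borel system $\X$ — so that $\mu_y$ and $\mu\otimes_\Y\mu$ exist — rather than as an extension of $\Y$ in its own right.
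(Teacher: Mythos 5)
Your proof is correct, but it takes a genuinely different route from the paper's. The paper argues by a two-step approximation: Step 1 treats kernels of the special form $\sum_{j}\psi_j\otimes\psi_j^\prime$ with bounded factors, proving precompactness by a weak-compactness/selection argument (Banach--Alaoglu on each fibre ball $E_y$, Tychonoff on $\prod_{y\in Y}E_y$, a selection lemma from \cite{Dai-f} to extract one subsequence $\phi_{n_\ell}$ converging weakly in every $\mathfrak{L}^2(X,\mathscr{X}^\prime,\mu_y)$ simultaneously, then dominated convergence to upgrade to $\|\cdot\|_{2,y}$-convergence); Step 2 approximates a general $H$ by such kernels and runs a $3\varepsilon$ argument. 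You instead fix a generic fibre, identify $\phi\mapsto H*_{\Y}\phi$ modulo $\mu_y$-null sets with the kernel operator $T_y$ on $\mathscr{H}_y^\prime$, and invoke the classical fact that an operator with kernel in $\mathfrak{L}^2(\mu_y\otimes\mu_y)$ is Hilbert--Schmidt, hence compact, so it maps the $r$-ball (which contains the image of $\B_r^{2,\infty}$ once representatives bounded by $r$ are chosen, exactly as the paper does by ``dropping a zero-measure set'') onto a precompact set. What each buys: the paper's Step 1 yields, for a given sequence in $\B_r^{2,\infty}$, a single subsequence along which $H*_{\Y}\phi_{n_\ell}$ is $\|\cdot\|_{2,y}$-convergent for \emph{all} $y$ at once --- a ``uniform in $y$'' selection your fibrewise argument does not provide; but the theorem only asserts precompactness for $\nu$-a.e.\ fixed $y$, and for that your argument is shorter, uses only standard operator theory, and, because the approximation by simple kernels is carried out in $\mathfrak{L}^2(\mu_y\otimes\mu_y)$ for the fixed fibre, it sidesteps the fibre-versus-global norm bookkeeping of the paper's Step 2 (where $\|H-H_n\|_{2,\mu\otimes_{\Y}\mu}$ is used to control $\|\cdot\|_{2,y}$-distances, which strictly requires first passing to a subsequence with $\|H-H_n\|_{2,\mu_y\otimes\mu_y}\to0$ for a.e.\ $y$). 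The two measure-theoretic points you flag --- that $H$ restricts to a kernel in $\mathfrak{L}^2(\mu_y\otimes\mu_y)$ for a.e.\ $y$, and that $\mu_{\pi(x)}=\mu_y$ for $\mu_y$-a.e.\ $x$ for $\nu$-a.e.\ $y$ (this uses the countable generation of $\mathscr{X}$, i.e.\ the standardness of $\X$, not just properties $(3)$--$(4)$ verbatim) --- are handled on a single conull set independent of $\phi$, so there is no hidden quantifier problem; also note that the paper's closing remark that $\phi\mapsto H*_{\Y}\phi$ is not a compact operator globally concerns the $\mu$-norm and does not conflict with compactness of the fibre operators $T_y$.
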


\begin{proof}
For our simplicity, assume all the $\mathfrak{L}^2$-spaces involved in the theorem consist of real-valued functions without loss of generality. Let $r>0$ and $H\in\mathscr{H}^\prime\otimes_{\Y}\mathscr{H}^\prime$ be any given. We will divide our proof of Theorem~\ref{thm1.2} into two steps.

\begin{Step1}
If $H$ is of the form $H(x,x^\prime)=\sum_{j=1}^J\psi_j(x)\psi_j^\prime(x^\prime)$ with $\psi_j(x),\psi_j^\prime(x^\prime)\in \mathfrak{L}^\infty(X,\mathscr{X}^\prime,\mu)$ and $1\le J<\infty$, then $H*_{\Y}\B_r^{2,\infty}$ is precompact in the Hilbert space $(\mathfrak{L}^2(X,\mathscr{X}^\prime,\mu),\|\cdot\|_{2,y})$ for $\nu$-\textit{a.e.} $y\in Y$.
\end{Step1}
\begin{proof}
To prove the above claim, we first note that by the Banach-Alaoglu Theorem, the closed ball of radius $r$
%\begin{gather*}
%E_\mu=\left\{\phi(x^\prime)\in \mathfrak{L}^2(X,\mathscr{X}^\prime,\mu)\colon\|\phi\|_2\le r\right\}
%\end{gather*}
%is weakly compact in $\mathfrak{L}^2(X,\mathscr{X}^\prime,\mu)$ and the closed balls of radius $r$
\begin{gather*}
E_y=\left\{\phi(x^\prime)\in \mathfrak{L}^2(X,\mathscr{X}^\prime,\mu_y)\colon \|\phi\|_{2,y}\le r\right\}
\end{gather*}
is weakly compact in $\mathfrak{L}^2(X,\mathscr{X}^\prime,\mu_y)$ for each $y\in Y$. Moreover, since $(X,\mathscr{X})$ is a standard Borel space, hence $\mathscr{H}_y$ and further $\mathscr{H}_y^\prime$ are separable. This implies that $E_y$ is metrizable with the weak topology.
Then by the Tychonoff Theorem it follows that $\prod_{y\in Y}E_y$ is a compact convex subset of the locally-convex Hausdorff product-topological vector space
$${\prod}_{y\in Y}\mathfrak{L}^2(X,\mathscr{X}^\prime,\mu_y)$$
where each of the factor spaces is under the natural weak topology.\footnote{Any separable Hilbert space is locally convex under the weak topology.}

Let $\{\phi_n\}_{n=1}^\infty$ be an arbitrary sequence of functions in $\B_r^{2,\infty}$. By dropping a zero-measure set if necessary, we may assume $|\phi_n(x^\prime)|\le r$ for all $n\ge1$ and any $x^\prime\in X$. Since $\phi_n\in E_y$ for each $y\in Y$, hence $\langle\phi_{n,y}\rangle_{y\in Y}\in\prod_{y\in Y}E_y$ where $\phi_{n,y}(x^\prime)=\phi_n(x^\prime)$ for all $n$. Therefore by \cite[Lemma~2]{Dai-f}, from the vector sequence $\left\{\langle\phi_{n,y}\rangle_{y\in Y}\right\}_{n=1}^\infty$ may be extracted a subsequence $\left\{\langle\phi_{n_{\ell},y}\rangle_{y\in Y}\right\}_{\ell=1}^\infty$ such that as $\ell\to\infty$ we have
\begin{gather*}
\phi_{n_\ell,y}\xrightarrow[]{\textit{weakly in }\mathfrak{L}^2(X,\mathscr{X}^\prime,\mu_y)}\phi_y\quad \forall y\in Y
\end{gather*}
for some point $\langle\phi_{y}\rangle_{y\in Y}\in\prod_{y\in Y}E_y$. Given any $\psi\in\mathfrak{L}^\infty(X,\mathscr{X}^\prime,\mu)$, $y\mapsto\langle\psi,\phi_y\rangle_y$ is $\mathscr{Y}$-measurable from $Y$ to $\mathbb{R}$ and so $x\mapsto\langle\psi,\phi_{\pi(x)}\rangle_{\pi(x)}$ is $\mathscr{X}^\prime$-measurable from $X$.

%Note here that since $\phi_{n,y}(x^\prime)$ as a function of the variables $y$ in $Y$ and $x^\prime$ in $X$ is $\mathscr{Y}\otimes\mathscr{X}^\prime$-measurable, the weak %limit $\phi_y(x^\prime)$ is $\mathscr{Y}\otimes\mathscr{X}^\prime$-measurable too.

%Next we can claim that $\phi(x^\prime)=\phi_y(x^\prime)$ for all $x^\prime\in\pi^{-1}(y)$ for $\nu$-\textit{a.e.} $y\in Y$. Indeed for any $\psi\in %\mathfrak{L}^\infty(X,\mathscr{X}^\prime)$, by $\mu=\int_Y\mu_y\nu(dy)$ we can obtain that
%\begin{equation*}\begin{split}
%\int_X\psi(x^\prime)\phi(x^\prime)\mu(dx^\prime)&=\lim_{\ell\to\infty}\int_X\psi(x^\prime)\phi_{n_\ell}(x^\prime)\mu(dx^\prime)
%=\lim_{\ell\to\infty}\int_Y\int_X\psi(x^\prime)\phi_{n_\ell}(x^\prime)\mu_y(dx^\prime)\nu(dy)\\
%&=\lim_{\ell\to\infty}\int_Y\int_X\psi(x^\prime)\phi_{n_\ell,y}(x^\prime)\mu_y(dx^\prime)\nu(dy)\\
%&=\int_Y\lim_{\ell\to\infty}\int_X\psi(x^\prime)\phi_{n_\ell,y}(x^\prime)\mu_y(dx^\prime)\nu(dy)\\
%&=\int_Y\int_X\psi(x^\prime)\phi_{y}(x^\prime)\mu_y(dx^\prime)\nu(dy)\\
%&=\int_Y\int_X\psi(x^\prime)\phi_{\pi(x^\prime)}(x^\prime)\mu_y(dx^\prime)\nu(dy)\\
%&=\int_X\psi(x^\prime)\phi_{\pi(x^\prime)}(x^\prime)\mu(dx^\prime);
%\end{split}\end{equation*}
%this means that $\phi(x^\prime)=\phi_y(x^\prime)\;\forall x^\prime\in\pi^{-1}(y)$ for $\nu$-\textit{a.e.} $y\in Y$.

Therefore for any function $H(x,x^\prime)=\sum_{j=1}^J\psi_j(x)\psi_j^\prime(x^\prime)$ with $\psi_j(x),\psi_j^\prime(x^\prime)$ in $\mathfrak{L}^\infty(X,\mathscr{X}^\prime,\mu)$ and $1\le J<\infty$, it holds that
$$
\sum_{j=1}^J\psi_j(x)\langle\psi_j^\prime,\phi_{n_\ell}\rangle_{\mathscr{H}_{\pi(x)}^\prime}=\sum_{j=1}^J\psi_j(x)\langle\psi_j^\prime,\phi_{n_\ell,\pi(x)}\rangle_{\mathscr{H}_{\pi(x)}^\prime}
\xrightarrow{\mu_y\textit{-a.e. }x\textit{ in }X}\sum_{j=1}^J\psi_j(x)\langle\psi_j^\prime,\phi_{\pi(x)}\rangle_{\mathscr{H}_{\pi(x)}^\prime}
$$
as $\ell\to\infty$. This implies that for $\mu_y\textit{-a.e.~}x\in X$,
\begin{gather*}
H*_{\Y}\phi_{n_\ell}(x)\to H*_{\Y}\phi_y(x) \quad \textit{as }\ell\to\infty.
\end{gather*}
So by $|H*_{\Y}\phi_{n_\ell}(x)|\le r\|H\|_\infty<\infty$ and the Lebesque Dominated Convergence Theorem, we see that $H*_{\Y}\phi_{n_\ell}\to H*_{\Y}\phi_y$ in the Hilbert space $(\mathfrak{L}^2(X,\mathscr{X}^\prime,\mu), \|\cdot\|_{2,y})$.

This proves that $H*_{\Y}\B_r^{2,\infty}$ is precompact in the Hilbert space $(\mathfrak{L}^2(X,\mathscr{X}^\prime,\mu), \|\cdot\|_{2,y})$.
\end{proof}

\begin{Step2}
Let $H$ be arbitrarily given in $\mathfrak{L}^2(X\times X,\mathscr{X}^\prime\otimes\mathscr{X}^\prime,\mu\otimes_{\Y}\mu)$; then $H*\B_r^{2,\infty}$ is precompact in $(\mathfrak{L}^2(X,\mathscr{X}^\prime,\mu), \|\cdot\|_{2,y})$ for $\nu$-\textit{a.e.} $y\in Y$.
\end{Step2}

\begin{proof}
Indeed since $\mathfrak{L}^\infty(X,\mathscr{X}^\prime,\mu)\otimes \mathfrak{L}^\infty(X,\mathscr{X}^\prime,\mu)$ is dense in $\mathfrak{L}^2(X\times X,\mathscr{X}^\prime\otimes\mathscr{X}^\prime,\mu\otimes_{\Y}\mu)$, hence there exists $H_n\to H$ in $\mathfrak{L}^2(X\times X,\mathscr{X}^\prime\otimes\mathscr{X}^\prime,\mu\otimes_{\Y}\mu)$ where each $H_n(x,x^\prime)$ has the special form of finite linear combination $\sum_j\psi_j\otimes\psi_j^\prime$ with $\psi_j,\psi_j^\prime\in \mathfrak{L}^\infty(X,\mathscr{X}^\prime,\mu)$.
Since by (\ref{eq1.3}) we have
\begin{gather*}
\|H_n*_{\Y}\phi-H*_{\Y}\phi\|_2\le r\|H_n-H\|_{2,\mu\otimes_{\Y}\mu}\quad\forall \phi\in\B_r^{2,\infty},
\end{gather*}
we see that as $n\to\infty$, under the $\mathfrak{L}^2$-norm $\|\cdot\|_2$
\begin{gather*}
H_n*_{\Y}\phi\to H*_{\Y}\phi
\end{gather*}
in $\mathfrak{L}^2(X,\mathscr{X}^\prime,\mu)$ uniformly for $\phi\in\B_r^{2,\infty}$. Let $\varepsilon>0$ be arbitrary. For $H_n*_{\Y}\B_r^{2,\infty}$ is precompact in $(\mathfrak{L}^2(X,\mathscr{X}^\prime,\mu),\|\cdot\|_{2,y})$ by Step~1, there exists a finite set of functions, say $\phi_1^{(n)},\dotsc,\phi_{k_n}^{(n)}$, in $\B_r^{2,\infty}$ so that
$$
\min_{1\le i\le k_n}\big{\|}H_n*_{\Y}\phi-H_n*_{\Y}\phi_i^{(n)}\big{\|}_{2,y}<\frac{\varepsilon}{3}\quad \forall \phi\in\B_r^{2,\infty}.
$$
Then by
\begin{equation*}\begin{split}
\big{\|}H*_{\Y}\phi-H*_{\Y}\phi_i^{(n)}\big{\|}_{2,y}&\le\|H*_{\Y}\phi-H_n*_{\Y}\phi\|_{2,y}+\big{\|}H_n*_{\Y}\phi-H_n*_{\Y}\phi_i^{(n)}\big{\|}_{2,y}
+\big{\|}H_n*_{\Y}\phi_i^{(n)}-H*_{\Y}\phi_i^{(n)}\big{\|}_{2,y}\\
&\le2r\|H-H_n\|_{2,\mu\otimes_{\Y}\mu}+\big{\|}H_n*_{\Y}\phi-H_n*_{\Y}\phi_i^{(n)}\big{\|}_{2,y}\\
\end{split}\end{equation*}
we see that as $n$ sufficiently large,
$$
\min_{1\le i\le k_n}\big{\|}H*_{\Y}\phi-H*_{\Y}\phi_i^{(n)}\big{\|}_{2,y}<\varepsilon\quad \forall \phi\in\B_r^{2,\infty}.
$$
This implies that $H*_{\Y}\B_r^{2,\infty}$ is precompact in $(\mathfrak{L}^2(X,\mathscr{X}^\prime,\mu),\|\cdot\|_{2,y})$, since $\varepsilon$ is arbitrary.
\end{proof}
The proof of Theorem~\ref{thm1.2} is therefore completed.
\end{proof}

Notice here that $\phi\mapsto H*_{\Y}\phi$ is not a compact operator from $(\mathfrak{L}^2(X,\mathscr{X}^\prime,\mu),\|\cdot\|_\infty)$ to $(\mathfrak{L}^2(X,\mathscr{X}^\prime,\mu),\|\cdot\|_2)$.

%%%%%%%%%%%%%%%%%%%%%%%%%%%%%%%%%%%%%%%%%%%%%%%%%%%%
\subsection{Relatively compact extensions}\label{sec1.4}%%%

The following concept is due to H.~Furstenberg and Y.~Katznelson 1978~\cite{FK}.

\begin{defn}[\cite{FK,Fur,EW}]\label{def1.3}%%%
Given any short factors series $\X\xrightarrow[]{\textit{Id}_X}\X^\prime=(X,\mathscr{X}^\prime,\mu,G)\xrightarrow[]{\pi}\Y$ and any subgroup $\varGamma$ of $G$, for $\pi\colon\X^\prime\rightarrow\Y$ a function $\varphi$ in $\mathfrak{L}^2(X,\mathscr{X}^\prime,\mu)$ is called
\begin{itemize}
\item \textit{FK almost periodic for $\varGamma$} (\textit{FK a.p. for $\varGamma$} for short) if for any $\varepsilon>0$ there exists a finite set of functions $\phi_1,\dotsc,\phi_k$ in $\mathfrak{L}^2(X,\mathscr{X},\mu)$ such that for every $g\in \varGamma$,
\begin{gather*}
{\min}_{1\le j\le k}\|U_g\varphi-\phi_j\|_{2,y}<\varepsilon\quad (\nu\textit{-a.e. }y\in Y).
\end{gather*}
\end{itemize}
We write $L_{\textit{FKap}}^2(X,\mathscr{X}^\prime,\mu,G\textrm{:}\varGamma)$ for the set of all the functions \textit{FK a.p. for $\varGamma$}.

Then $\pi\colon\X^\prime\rightarrow\Y$ is said to be
\begin{itemize}
\item \textit{relatively compact for $\varGamma$} if $L_{\textit{FKap}}^2(X,\mathscr{X}^\prime,\mu,G\textrm{:}\varGamma)$ is dense in $\big{(}\mathfrak{L}^2(X,\mathscr{X}^\prime,\mu),\|\cdot\|_{2,\mu}\big{)}$.
\end{itemize}
Note here that \textit{FK a.p. for $G$} is simply called AP in \cite[Def.~7.18]{EW}. Moreover, it is not required that $\phi_1,\dotsc,\phi_k$ belong to the $\varGamma$-orbit of $\varphi$. Here \textit{FK} is the abbreviation of Furstenberg-Katznelson.
\end{defn}

According to this definition, we easily see that:
\begin{itemize}
\item The relative compactness of $\pi\colon\X^\prime\rightarrow\Y$ appears to depend heavily upon the $G$-factor $(\Y,\pi)$ and also the standard Borel structure of $(X,\mathscr{X})$; this is because the disintegration $\{\mu_y\}_{y\in Y}$ of $\mu$ is closely related with the given condition $(\Y,\pi)$ and the standard Borel structure of $(X,\mathscr{X})$.
\item To get around some hard points appeared in the literature like \cite{FK,Fur,EW}, here we only require loosely that
$\phi_1,\dotsc,\phi_k\in\mathfrak{L}^2(X,\mathscr{X},\mu)$ instead of $\phi_1,\dotsc,\phi_k\in\mathfrak{L}^2(X,\mathscr{X}^\prime,\mu)$.
This point is different with the literature.
\end{itemize}
Clearly, if $\varphi\in\mathfrak{L}^2(X,\mathscr{X}^\prime,\mu)$ is \textit{FK a.p. for $\varGamma$}, then $\varGamma[\varphi]$ is precompact in $(\mathfrak{L}^2(X,\mathscr{X}^\prime,\mu_y),\|\cdot\|_{2,y})$ for $\nu$-\textit{a.e.} $y\in Y$. However, Theorem~\ref{thm1.2} together with Theorem~\ref{thm2.4} below shows that the converse is never true.

Then the following result is an important technique lemma for our arguments later.

\begin{Lem}\label{lem1.4}%%%
Let $\X\xrightarrow[]{\textit{Id}_X}\X^\prime=(X,\mathscr{X}^\prime,\mu,G)\xrightarrow[]{\pi}\Y$ be any short factors series. Given any $R$-submodule $G^\prime$ of the \textit{lcsc} $R$-module $G$, let $\varGamma$ be a countable dense subgroup of $G^\prime$, $\delta\ge0, \epsilon>0$, and $\varphi\in\mathscr{H}^\prime,\phi_1,\dotsc,\phi_k\in\mathscr{H}$ with the property that for any $\gamma\in\varGamma$,
$$
\min_{1\le i\le k}\|U_\gamma\varphi-\phi_i\|_{2,y}<\epsilon\quad \textrm{but for a set of }y\in Y\textrm{ of }\nu\textrm{-measure }\le\delta.
$$
Then for any $g\in G^\prime$,
$$
\min_{1\le i\le k}\|U_g\varphi-\phi_i\|_{2,y}<4\epsilon\quad \textrm{but for a set of }y\in Y\textrm{ of }\nu\textrm{-measure }\le4\delta.
$$
Hence,
\begin{gather*}
L_{\textit{FKap}}^2(X,\mathscr{X}^\prime,\mu,G\mathrm{:}G^\prime)=L_{\textit{FKap}}^2(X,\mathscr{X}^\prime,\mu,G\mathrm{:}\varGamma).
\end{gather*}
\end{Lem}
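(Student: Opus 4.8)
The plan is to fix $g\in G^\prime$ and bound the $\nu$-measure of the bad set
\[
E_g=\Big\{y\in Y:\ \min_{1\le i\le k}\|U_g\varphi-\phi_i\|_{2,y}\ge 2\epsilon\Big\},
\]
which is $\mathscr{Y}$-measurable because $\|\psi\|_{2,y}^2=E_\mu(|\psi|^2\,|\,\Y)(y)$ is $\mathscr{Y}$-measurable in $y$ for each $\psi\in\mathscr{H}$. Once we show $\nu(E_g)\le\delta$ for every $g\in G^\prime$, the first assertion follows a fortiori, since $\{y:\min_i\|U_g\varphi-\phi_i\|_{2,y}\ge 4\epsilon\}\subseteq E_g$; in fact one even obtains the sharper constants $2$ and $1$ in place of $4$ and $4$.

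The crux is to upgrade approximation in the coarse norm $\|\cdot\|_{2,\mu}$ to fiberwise approximation in $\|\cdot\|_{2,y}$ off a set of arbitrarily small $\nu$-measure. Fix $\tau>0$. Since $\varphi\in\mathscr{H}^\prime\subseteq\mathscr{H}$, the orbit map $h\mapsto U_h\varphi$ is continuous from $G$ into $\mathscr{H}$, so the preimage of the open $\tau$-ball about $U_g\varphi$ is a neighbourhood of $g$; as $\varGamma$ is dense in $G^\prime\ni g$, this neighbourhood contains some $\gamma\in\varGamma$, i.e.\ $\|U_\gamma\varphi-U_g\varphi\|_{2,\mu}<\tau$. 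Combining Markov's inequality with $\|\psi\|_{2,\mu}^2=\int_Y\|\psi\|_{2,y}^2\,\nu(dy)$ gives
\[
\nu\big(\{y:\|U_\gamma\varphi-U_g\varphi\|_{2,y}\ge\epsilon\}\big)\le\epsilon^{-2}\,\|U_\gamma\varphi-U_g\varphi\|_{2,\mu}^2<\tau^2\epsilon^{-2},
\]
while by hypothesis $\nu(B_\gamma)\le\delta$, where $B_\gamma=\{y:\min_i\|U_\gamma\varphi-\phi_i\|_{2,y}\ge\epsilon\}$. If $y$ lies outside both sets, then choosing $i$ realizing the minimum at $y$ yields $\|U_\gamma\varphi-\phi_i\|_{2,y}<\epsilon$ and $\|U_g\varphi-U_\gamma\varphi\|_{2,y}<\epsilon$, whence $\|U_g\varphi-\phi_i\|_{2,y}<2\epsilon$ by the triangle inequality, so $y\notin E_g$. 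Hence $E_g\subseteq B_\gamma\cup\{y:\|U_\gamma\varphi-U_g\varphi\|_{2,y}\ge\epsilon\}$ and $\nu(E_g)\le\delta+\tau^2\epsilon^{-2}$; since $E_g$ does not depend on $\tau$ and $\tau>0$ is arbitrary, letting $\tau\to0$ gives $\nu(E_g)\le\delta$.

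For the displayed identity, the inclusion $L_{\textit{FKap}}^2(X,\mathscr{X}^\prime,\mu,G{:}G^\prime)\subseteq L_{\textit{FKap}}^2(X,\mathscr{X}^\prime,\mu,G{:}\varGamma)$ is immediate since $\varGamma\subseteq G^\prime$. Conversely, if $\varphi$ is \textit{FK a.p. for $\varGamma$}, then given $\epsilon>0$ we may pick $\phi_1,\dots,\phi_k\in\mathfrak{L}^2(X,\mathscr{X},\mu)$ witnessing this at level $\epsilon/4$; this is precisely the hypothesis of the lemma with $\delta=0$ and $\epsilon$ replaced by $\epsilon/4$, so the first part yields $\min_i\|U_g\varphi-\phi_i\|_{2,y}<\epsilon$ for $\nu$-a.e.~$y$ and every $g\in G^\prime$, i.e.\ $\varphi$ is \textit{FK a.p. for $G^\prime$}. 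Thus the two spaces coincide.

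The one genuine obstacle is this transfer step: the hypothesis constrains $U_\gamma\varphi$ only for the countably many $\gamma\in\varGamma$, while the conclusion must cover every $g\in G^\prime$. Koopman continuity supplies the missing uniformity, but only in the global norm $\|\cdot\|_{2,\mu}$; Markov's inequality then converts this into fiberwise control up to a small exceptional set, the decisive point being that the target set $E_g$ is independent of the approximant $\gamma$, so the exceptional measure can be driven below $\tau^2\epsilon^{-2}$ for every $\tau>0$.
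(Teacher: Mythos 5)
Your argument is correct, and it reaches the conclusion by a genuinely different and more economical route than the paper. The paper's proof picks a sequence $g_n\to g$ in $\varGamma$, approximates $\varphi$ by bounded functions $\psi_\ell$ with $\|\varphi-\psi_\ell\|_{2,y}\to0$ for $\nu$-a.e.\ $y$, restricts to a $\varGamma\cup\{g\}$-invariant full-measure set, and then controls a four-term fiberwise triangle inequality (including the terms $\|\varphi-\psi_\ell\|_{2,g(y)}$, $\|U_g\psi_\ell-U_{g_n}\psi_\ell\|_{2,y}$, $\|\psi_\ell-\varphi\|_{2,g_n(y)}$) via Egoroff's theorem; the accumulation of these errors is exactly what produces the constants $4\epsilon$ and (roughly) $4\delta$. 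You instead use only the strong continuity of $h\mapsto U_h\varphi$ in the global norm $\|\cdot\|_{2,\mu}$ (which the paper establishes via Varadarajan's theorem, so you are entitled to it), convert the global estimate into fiberwise control by Chebyshev's inequality through the identity $\|\psi\|_{2,\mu}^2=\int_Y\|\psi\|_{2,y}^2\,\nu(dy)$, and exploit the decisive structural point that the bad set $E_g$ does not depend on the approximant $\gamma$, so the exceptional mass $\tau^2\epsilon^{-2}$ can be sent to $0$. This avoids Egoroff, the bounded approximants $\psi_\ell$, and the fiberwise comparison of $U_g\psi_\ell$ with $U_{g_n}\psi_\ell$ (the least transparent step of the paper's argument), and it yields the sharper constants $2\epsilon$ and $\delta$, which trivially imply the stated $4\epsilon$, $4\delta$; the deduction of $L_{\textit{FKap}}^2(X,\mathscr{X}^\prime,\mu,G{:}G^\prime)=L_{\textit{FKap}}^2(X,\mathscr{X}^\prime,\mu,G{:}\varGamma)$ with $\delta=0$ then goes through exactly as in the paper. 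The only points worth stating explicitly in a final write-up are that $y\mapsto\|U_g\varphi-\phi_i\|_{2,y}$ is taken as a fixed $\mathscr{Y}$-measurable version (so $E_g$ is well defined up to $\nu$-null sets) and that the a.e.\ exceptional set in the FK a.p.\ definition is allowed to depend on $g$, which is what makes the per-$g$ estimate sufficient.
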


\begin{proof}
Without loss of generality, we may suppose that $(X,\mathscr{X})$ is a Polish $G$-space by Varadarajan's theorem. Let $g\in G^\prime$ be any given and take any sequence in $\varGamma$, $g_n\to g$. Now select a sequence of bounded $\mathscr{X}^\prime$-functions $\psi_\ell\in\mathscr{H}^\prime$ with $\psi_\ell\to\varphi$ in $\mathscr{H}^\prime$. Since
\begin{gather*}
\|\varphi-\psi_\ell\|_{2,\mu}=\left(\int_Y\|\varphi-\psi_\ell\|_{2,y}^2\nu(dy)\right)^{1/2}\to0\quad \textit{as }\ell\to\infty,
\end{gather*}
hence by passing to a subsequence if necessary, we may assume
\begin{gather*}
\|\varphi-\psi_\ell\|_{2,y}\to0\quad \textit{as }\ell\to\infty\quad (\nu\textit{-a.e. }y\in Y).
\end{gather*}
Inasmuch as $\varGamma$ is countable, so we can choose some $Y_\infty\in\mathscr{Y}$ with $\nu(Y_\infty)=1$ such that
\begin{equation*}
\gamma(Y_\infty)=Y_\infty\ \forall \gamma\in\varGamma\cup\{g\}\quad\textit{and}\quad \lim_{\ell\to\infty}\|\varphi-\psi_\ell\|_{2,y}=0\ \forall y\in Y_\infty.
\end{equation*}
From the fact that for $\nu$-\textit{a.e.}~$y\in Y_\infty$
\begin{equation*}\begin{split}
\min_{1\le i\le k}\|U_g\varphi-\phi_i\|_{2,y}&\le\min_{1\le i\le k}\left(\|U_g\varphi-U_{g_n}\varphi\|_{2,y}+\|U_{g_n}\varphi-\phi_i\|_{2,y}\right)\\
&\le\min_{1\le i\le k}\|U_{g_n}\varphi-\phi_i\|_{2,y}+\|U_g\varphi-U_g\psi_\ell\|_{2,y}+\|U_g\psi_\ell-U_{g_n}\psi_\ell\|_{2,y}+\|U_{g_n}\psi_\ell-U_{g_n}\varphi\|_{2,y}\\
&\le\min_{1\le i\le k}\|U_{g_n}\varphi-\phi_i\|_{2,y}+\|\varphi-\psi_\ell\|_{2,g(y)}+\|U_g\psi_\ell-U_{g_n}\psi_\ell\|_{2,y}+\|\psi_\ell-\varphi\|_{2,g_n(y)}
\end{split}\end{equation*}
together with using Egoroff's theorem for the sequence of functions $\{\|\psi_\ell-\varphi\|_{2,y}\}_{\ell=1}^\infty$ of the variable $y$ on $(Y,\mathscr{Y},\nu)$, it follows that for any $\delta^\prime>3\delta$ we can find a set $U\in\mathscr{Y}$ with $U\subset Y_\infty, \nu(U)>1-\delta^\prime$ such that
\begin{gather*}
\min_{1\le i\le k}\|U_g\varphi-\phi_i\|_{2,y}<4\epsilon\quad\forall y\in U.
\end{gather*}
From this we can conclude the lemma for $\delta^\prime>3\delta$ is arbitrary.
\end{proof}

It should be noted here that the subgroup $\varGamma$ in Lemma~\ref{lem1.4} is not necessarily to be an $R$-submodule of $G$. For example, for $G=\mathbb{R}^r$, the rational subgroup $\mathbb{Q}^r$ is countable dense in $\mathbb{R}^r$ but it is not an $\mathbb{R}$-submodule; $\mathbb{Z}$ is dense in the $p$-adic integers ring $\mathbb{Z}_p$.

%%%%%%%%%%%%%%%%%%%%%%%%%%%%%%%%%%%%%%%%%%%%%%%
%%%%%%%%%%%%%%%%%%%%%%%%%%%%%%%%%%%%%%%%%%%%%%%
\section{Characterizations of relatively compact extension for amenable groups}\label{sec2}%%%
In this section, we will generalize Furstenberg and Katznelson's characterization theorem of relatively compact extensions to any amenable groups following the framework of \cite{Z76, FK,Fur} combining with Varadarajan's isomorphism theorem and the $\mathfrak{L}^2$-mean ergodic theorem.

Throughout this section, let $\X\xrightarrow[]{\textit{Id}_X}\X^\prime=(X,\mathscr{X}^\prime,\mu,G)\xrightarrow[]{\pi}\Y$ be any short factors series under the Borel actions by a same \textit{lcsc} $R$-module $G$ over a ring $(R,+,\cdot)$ not necessarily syndetic, where
\begin{itemize}
\item$(X,\mathscr{X},\mu)$ is a standard Borel probability space so we can disintegrate $\mu$ over $\Y$, $\mu=\int_Y\mu_yd\nu(y)$.
\end{itemize}
As in Remark of $\S\ref{sec1.2}$, write
\begin{gather*}
\mathscr{H}=\mathfrak{L}^2(X,\mathscr{X},\mu)\intertext{and}
\mathscr{H}^\prime=\mathfrak{L}^2(X,\mathscr{X}^\prime,\mu),\quad \mathscr{H}^\prime\otimes_{\Y}\mathscr{H}^\prime=\mathfrak{L}^2(X\times X,\mathscr{X}^\prime\otimes\mathscr{X}^\prime,\mu\otimes_{\Y}\mu),
\end{gather*}
where $\mu\otimes_{\Y}\mu$ is defined based on $\pi\colon\X\rightarrow\Y$ not on $\pi\colon\X^\prime\rightarrow\Y$.

We shall work on the intermediate factor $\X^\prime$ but not on $\X$, as in $\S\ref{sec1}$.

\subsection{Compactness of an extension}%%%
Following the ideas of \cite[Def.~2.1]{FK} and \cite[Sect.~6.3]{Fur}, we now equivalently describe the \textit{relative compactness} of ${\pi}\colon\X^\prime\rightarrow\Y$ respecting any given $R$-submodule $\varGamma$ of $G$.
\begin{itemize}
\item[$\mathbf{C}_1\mathrm{:}$] The set of functions $H*_{\Y}\phi$ spans a dense subset of $\mathscr{H}^\prime$ as $H(x,x^\prime)$ ranges over the bounded $\varGamma$-invariant functions in $\mathscr{H}^\prime\otimes_{\Y}\mathscr{H}^\prime$ and $\phi(x^\prime)$ ranges over the bounded functions in $\mathscr{H}^\prime$.

\item[$\mathbf{C}_2\mathrm{:}$] $\overline{L_{\textit{FKap}}^2(X,\mathscr{X}^\prime,\mu,G\textrm{:}\varGamma)}=\mathscr{H}^\prime$, where a function of \textit{FK a.p. for~$\varGamma$} is defined as in Def.~\ref{def1.3} in $\S\ref{sec1.4}$ following the idea of Furstenberg and Katznelson~\cite{FK}.

\item[$\mathbf{C}_3\mathrm{:}$] Each $\phi\in\mathscr{H}^\prime$ is such that for any $\delta>0$ there exists a $\mathscr{Y}$-set $B\subseteq Y$ with $\nu(B)>1-\delta$ such that the modification
\begin{gather*}
\phi_B(x)=1_{\pi^{-1}[B]}(x)\phi(x)\quad \forall x\in X
\end{gather*}
is \textit{FK a.p. for~$\varGamma$}; that is, for any $\varepsilon>0$ there exists a finite set of functions $\phi_1,\dotsc,\phi_k\in\mathscr{H}$ verifying that for each $g\in\varGamma$,
    \begin{gather*}
    \min_{1\le i\le k}\|U_g\phi_B-\phi_i\|_{2,y}<\varepsilon\quad (\nu\textit{-a.e. }y\in Y).
    \end{gather*}
It is different with $\mathbf{C}_3$ of \cite{FK, Fur} there the choice of $B\in\mathscr{Y}$ depends not only on $\delta$ but also on $\varepsilon$. This $\mathbf{C}_3$-condition implies \cite[Lemma~7.24]{EW} for $R=\mathbb{Z}$.

\item[$\mathbf{C}_4\mathrm{:}$] Any $\phi\in\mathscr{H}^\prime$ is \textit{FK $\nu$-almost almost-periodic for $\varGamma$} (\textit{FK a.a.p. for~$\varGamma$} for short) in the sense that for any $\delta>0$ and $\varepsilon>0$, there exists a finite set of functions $\phi_1,\dotsc,\phi_k$ in $\mathscr{H}$ such that for each $g\in\varGamma$,
    \begin{gather*}
    \min_{1\le i\le k}\|U_g\phi-\phi_i\|_{2,y}<\varepsilon\quad
    \textit{but for a set of }y\in Y\textrm{ of }\nu\textit{-measure less than }\delta.
    \end{gather*}
    We will write $L_{\textsl{FKaap}}^2(X,\mathscr{X}^\prime,\mu,G\textrm{:}\varGamma)$ for the set of all \textit{FK a.a.p. for~$\varGamma$} $\mathfrak{L}^2$-functions. Here `\textit{FK}' is for Furstenberg-Katznelson.

\item[$\mathbf{C}_5\mathrm{:}$] Let $\{F_n\}_{1}^\infty$ be a weak F{\o}lner sequence in $\varGamma$ with a left Haar measure $m_\varGamma$ or write $dg$.\footnote{Let $G$ be an amenable \textit{lcsc} group with a fixed left Haar measure $m_G$. Then there always exists a so-called \textit{weak F{\o}lner sequence} $\{F_n\}_{1}^\infty$ in $G$, i.e., a sequence of nonull compact subsets $F_n$ of positive Haar measure of $G$ such that
\begin{equation*}
\lim_{n\to\infty}\frac{m_G(gF_n\vartriangle F_n)}{m_G(F_n)}=0\quad\forall g\in G.
\end{equation*}
It is a well-known fact that every \textit{lcsc} abelian group is amenable; see, e.g.,~\cite{Pat,EW}. Thus an \textit{lcsc} $R$-module is amenable by our convention before.} For each $\varphi\in\mathscr{H}^\prime$, we define
   \begin{gather*}
   P(\varphi\otimes\bar{\varphi})=\textit{w-}\lim_{n\to\infty}\frac{1}{m_\varGamma(F_n)}\int_{F_n}U_g(\varphi\otimes\bar{\varphi})dg,
   \end{gather*}
   here the \textit{w-}limit means the convergence in the weak topology of $\mathscr{H}\otimes_{\Y}\mathscr{H}$. Then
   \begin{itemize}
   \item $P(\varphi\otimes\bar{\varphi})=0$ if and only if $\varphi=0$.
   \end{itemize}

\item[$\mathbf{C}_6\mathrm{:}$] Let $\{F_n\}_{1}^\infty$ be a weak F{\o}lner sequence in $\varGamma$ with a left Haar measure $m_\varGamma$ or $dg$. For each $\varphi\in\mathscr{H}^\prime$, set
   \begin{gather*}
   P(\varphi\otimes\bar{\varphi})=\textit{s-}\lim_{n\to\infty}\frac{1}{m_\varGamma(F_n)}\int_{F_n}U_g(\varphi\otimes\bar{\varphi})dg,
   \end{gather*}
   where the \textit{s-}limit exists in the $\mathfrak{L}^2$-norm of $\mathscr{H}\otimes_{\Y}\mathscr{H}$. Then
   \begin{itemize}
   \item $P(\varphi\otimes\bar{\varphi})=0$ if and only if $\varphi=0$.
   \end{itemize}
\end{itemize}
It should be noted that:
\begin{itemize}
\item In $\mathbf{C}_3$ and $\mathbf{C}_4$, we did not require that $\phi_1,\dotsc,\phi_k\in\mathscr{H}^\prime$ instead of $\phi_1,\dotsc,\phi_k\in\mathscr{H}$.
\item By the $\mathfrak{L}^2$-mean ergodic theorem~\cite{Dai-16}, $P(\varphi\otimes\bar{\varphi})$ is well defined in $\mathbf{C}_5$ and $\mathbf{C}_6$.
\end{itemize}

The following lemma is useful for us to understand the \textit{FK} $\nu$-almost almost-periodicity property of the extension $\pi\colon\X^\prime\rightarrow\Y$.

\begin{Lem}\label{lem2.1}%%%
Let $\varGamma$ be any $R$-submodule of $G$. If $\varphi\in\mathscr{H}^\prime$ is precompact for $\varGamma$ in $\mathscr{H}^\prime$, i.e., the $\varGamma$-orbit $\varGamma[\varphi]$ is precompact in $(\mathfrak{L}^2(X,\mathscr{X}^\prime,\mu),\|\cdot\|_2)$, then $\varphi\in L_{\textit{FKaap}}^2(X,\mathscr{X}^\prime,\mu,G\mathrm{:}\varGamma)$.
\end{Lem}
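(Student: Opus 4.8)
The plan is to read off what precompactness of $\varGamma[\varphi]$ in the \emph{global} $\mathfrak{L}^2$-norm $\|\cdot\|_{2,\mu}$ buys us, and then to convert a uniform global $\mathfrak{L}^2$-estimate into the fiberwise estimate required by $\mathbf{C}_4$ by a one-line Chebyshev argument based on the disintegration identity $\|\psi\|_{2,\mu}^2=\int_Y\|\psi\|_{2,y}^2\,\nu(dy)$.

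First I would fix $\delta>0$ and $\varepsilon>0$ and put $\eta:=\varepsilon\sqrt{\delta}$. Because $\big(\mathfrak{L}^2(X,\mathscr{X}^\prime,\mu),\|\cdot\|_{2,\mu}\big)$ is complete, the precompact (hence totally bounded) set $\varGamma[\varphi]=\{U_\gamma\varphi:\gamma\in\varGamma\}$ admits a finite $\eta$-net; that is, there are $\phi_1,\dots,\phi_k\in\mathfrak{L}^2(X,\mathscr{X}^\prime,\mu)\subseteq\mathscr{H}$, chosen once and for all independently of $\gamma$, with
\begin{gather*}
\min_{1\le i\le k}\|U_\gamma\varphi-\phi_i\|_{2,\mu}<\eta\qquad\forall\,\gamma\in\varGamma .
\end{gather*}
Here I use that $\mathscr{X}^\prime$ is $G$-invariant, so $U_\gamma\varphi\in\mathscr{H}^\prime$ for every $\gamma$, and that the definition of \textit{FK a.a.p.} in $\mathbf{C}_4$ allows the approximants to lie in $\mathscr{H}$ rather than in $\mathscr{H}^\prime$.

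Next, for each fixed $\gamma\in\varGamma$ I would choose an index $i_0=i_0(\gamma)$ realizing the above minimum and apply Chebyshev's inequality to the nonnegative $\mathscr{Y}$-measurable function $y\mapsto\|U_\gamma\varphi-\phi_{i_0}\|_{2,y}^2$:
\begin{gather*}
\nu\bigl(\{y\in Y:\|U_\gamma\varphi-\phi_{i_0}\|_{2,y}\ge\varepsilon\}\bigr)\le\frac{1}{\varepsilon^2}\int_Y\|U_\gamma\varphi-\phi_{i_0}\|_{2,y}^2\,\nu(dy)=\frac{\|U_\gamma\varphi-\phi_{i_0}\|_{2,\mu}^2}{\varepsilon^2}<\frac{\eta^2}{\varepsilon^2}=\delta .
\end{gather*}
On the complement of that $\nu$-small set one has $\min_{1\le i\le k}\|U_\gamma\varphi-\phi_i\|_{2,y}\le\|U_\gamma\varphi-\phi_{i_0}\|_{2,y}<\varepsilon$, and since $\phi_1,\dots,\phi_k$ do not depend on $\gamma$, this is precisely the assertion that $\varphi$ is \textit{FK a.a.p. for}~$\varGamma$, i.e.\ $\varphi\in L_{\textit{FKaap}}^2(X,\mathscr{X}^\prime,\mu,G\mathrm{:}\varGamma)$.

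I do not expect a genuine obstacle here: the lemma is the elementary implication ``small in $\mathfrak{L}^2(X,\mu)$ $\Rightarrow$ small on most fibers,'' packaged with the observation that precompactness upgrades a fiberwise approximation into a single $\gamma$-independent finite net. The only points that deserve a careful word are the completeness/total-boundedness equivalence used to extract the uniform net, the $G$-invariance of $\mathscr{X}^\prime$ (so that the orbit stays in $\mathscr{H}^\prime$), and the convention in $\mathbf{C}_4$ permitting $\phi_i\in\mathscr{H}$; all three are already built into the standing setup.
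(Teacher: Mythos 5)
Your proof is correct and follows essentially the same route as the paper's: a finite $\varepsilon\sqrt{\delta}$-scale net for the precompact orbit $\varGamma[\varphi]$ in the global norm $\|\cdot\|_{2,\mu}$, followed by the Chebyshev/disintegration step $\|\psi\|_{2,\mu}^2=\int_Y\|\psi\|_{2,y}^2\,\nu(dy)$ to bound the $\nu$-measure of the bad fibers by $\delta$. The only cosmetic difference is that the paper picks its net inside $\varGamma[\varphi]$ and uses the threshold $\tfrac12\varepsilon\sqrt{\delta}$, while you allow the net in $\mathscr{H}$ (which $\mathbf{C}_4$ permits) with threshold $\varepsilon\sqrt{\delta}$; both yield the same conclusion.
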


\begin{proof}
Let $\delta>0$ and $\varepsilon>0$ be any given. Since $\varGamma[\varphi]$ is precompact in $(\mathfrak{L}^2(X,\mathscr{X}^\prime,\mu),\|\cdot\|_2)$, there is a finite set of functions $\{\phi_1,\dotsc, \phi_\ell\}\subset\varGamma[\varphi]$ such that
$$
\min_{1\le j\le \ell}\|U_g\varphi-\phi_j\|_{2,\mu}<\frac{1}{2}\varepsilon\sqrt{\delta}\quad \forall g\in\varGamma.
$$
For any $g\in\varGamma$, we set
$$
B=\left\{y\in Y\,\big{|}\,\min_{1\le j\le \ell}\|U_g\varphi-\phi_j\|_{2,y}<\frac{1}{2}\varepsilon\right\}.
$$
Then
\begin{gather*}
\frac{1}{4}\varepsilon^2\delta>\min_{1\le j\le \ell}\int_Y\|U_g\varphi-\phi_j\|_{2,y}^2\nu(dy)\ge\int_Y\min_{1\le j\le \ell}\|U_g \varphi-\phi_j\|_{2,y}^2\nu(dy)\ge\frac{1}{4}\varepsilon^2\nu(Y-B)\intertext{and so}\nu(B)\ge1-\delta.
\end{gather*}
This implies that $\varphi\in L_{\textit{FKaap}}^2(X,\mathscr{X}^\prime,\mu,G\mathrm{:}\varGamma)$.

The proof of Lemma~\ref{lem2.1} is thus completed.
\end{proof}

For our convenience, we now introduce another standard notation, which is completely independent of the factor $\Y$ and the factor $G$-map $\pi$ from $\X^\prime$ to $\Y$.

\begin{defn}[{cf.~\cite{Z76,Fur}}]\label{def2.2}%%%
For the intermediate factor $\X^\prime=(X,\mathscr{X}^\prime,\mu,G)$, any function $\phi\in \mathfrak{L}^2(X,\mathscr{X}^\prime,\mu)$ is called
\begin{itemize}
\item (von Neumannian) \textit{almost periodic for an $R$-submodule $\varGamma$} of $G$ (\textit{a.p. for~$\varGamma$} for short) if the partial orbit $\varGamma[\phi]$ is precompact in the space $\big{(}\mathfrak{L}^2(X,\mathscr{X}^\prime,\mu),\|\cdot\|_2\big{)}$.
\end{itemize}
Write $L_{\textit{ap}}^2(X,\mathscr{X}^\prime,\mu,G\mathrm{:}\varGamma)$ for the set of all the $\mathfrak{L}^2$-functions of \textit{a.p. for~$\varGamma$} on $X$.
\end{defn}

Then by Lemma~\ref{lem2.1}, every \textit{a.p. for $\varGamma$} function is an \textit{FK a.a.p. for $\varGamma$} function. Clearly only from definitions, \textit{a.p.} $\not=$ \textit{FK a.a.p.} for a same submodule $\varGamma$. We will construct an example later; see Corollary~\ref{cor4.6} in $\S\ref{sec4.2}$.

The following lemma is obvious but it is very useful.

\begin{Lem}\label{lem2.3}
Let $G$ be an \textit{lcsc} $R$-module, $\X\xrightarrow[]{\textit{Id}_X}\X^\prime\xrightarrow[]{\pi}\Y$ a short factors series and $\varGamma\subseteq G$ an $R$-submodule. Then the following statements hold:
\begin{enumerate}
\item[$(1)$] $L_{\textit{FKaap}}^2(X,\mathscr{X}^\prime,\mu,G\mathrm{:}\varGamma)$ forms an algebra and it is a closed subspace of $\big{(}\mathfrak{L}^2(X,\mathscr{X}^\prime,\mu),\|\cdot\|_2\big{)}$.
\item[$(2)$] If $\varphi\in\mathscr{H}^\prime$ is \textit{FK a.a.p. for $G$}, then $U_g\varphi$ is also \textit{FK a.a.p. for $G$} for each $g\in G$.
\item[$(3)$] If $H(x,x^\prime)\in\mathscr{H}^\prime\otimes_{\Y}\mathscr{H}^\prime$ is $\varGamma$-invariant and $\phi(x^\prime)\in \mathfrak{L}^\infty(X,\mathscr{X}^\prime,\mu)$, then
    \begin{equation}\label{eq2.1}
    U_g(H*_{\Y}\phi)=H*_{\Y}(U_g\phi)
    \end{equation}
     for every $g\in\varGamma$.
\end{enumerate}
\end{Lem}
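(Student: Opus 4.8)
The plan is to verify the three parts of Lemma~\ref{lem2.3} essentially by unwinding definitions, since each is a soft consequence of properties already established.

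\textbf{Part (1).} To see that $L_{\textit{FKaap}}^2(X,\mathscr{X}^\prime,\mu,G\mathrm{:}\varGamma)$ is a closed subspace and an algebra, I would argue as follows. For closedness and the vector-space structure, suppose $\varphi,\psi$ are \textit{FK a.a.p.\ for $\varGamma$} and fix $\delta,\varepsilon>0$. Choose finite sets $\{\phi_i\}$ for $\varphi$ (with parameters $\delta/2,\varepsilon/2$) and $\{\psi_j\}$ for $\psi$ (with $\delta/2,\varepsilon/2$); then the finite set $\{a\phi_i + b\psi_j\}$ witnesses \textit{FK a.a.p.} for $a\varphi+b\psi$ with parameters $\delta,\varepsilon|a|+\varepsilon|b|$ rescaled appropriately, using the triangle inequality for $\|\cdot\|_{2,y}$ on the set where both minima are small (whose complement has measure $\le\delta$). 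For closedness under $\|\cdot\|_2$-limits, if $\varphi_m\to\varphi$ in $\mathscr{H}^\prime$ and each $\varphi_m$ is \textit{FK a.a.p.}, pick $m$ with $\|\varphi-\varphi_m\|_{2,\mu}<\tfrac{1}{2}\varepsilon\sqrt{\delta}$, which by Chebyshev forces $\|\varphi-\varphi_m\|_{2,y}<\tfrac12\varepsilon$ off a set of $\nu$-measure $\le\delta/2$ (this is the same trick used in the proof of Lemma~\ref{lem2.1}); combining with an approximating family for $\varphi_m$ with parameters $\delta/2,\varepsilon/2$ and using $\|U_g\varphi-U_g\varphi_m\|_{2,y}=\|\varphi-\varphi_m\|_{2,g(y)}$ gives a family for $\varphi$. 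For the algebra property, given bounded representatives one multiplies: if $\varphi,\psi$ are bounded and \textit{FK a.a.p.}, then $\{\phi_i\psi_j\}$ works for $\varphi\psi$, and a general pair is handled by truncation and the closedness just established. (One minor point to address: \textit{FK a.a.p.} is a priori stated for functions in $\mathscr{H}^\prime$, so truncations stay in $\mathscr{H}^\prime$ since $\mathscr{X}^\prime$ is a $\sigma$-algebra.)

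\textbf{Part (2).} If $\varphi\in\mathscr{H}^\prime$ is \textit{FK a.a.p.\ for $G$} and $g_0\in G$, fix $\delta,\varepsilon>0$ and take a finite set $\{\phi_i\}\subset\mathscr{H}$ witnessing \textit{FK a.a.p.} for $\varphi$ with parameters $\delta,\varepsilon$. For $g\in G$ we have $U_g(U_{g_0}\varphi)=U_{gg_0}\varphi$ (note $G$ is abelian), so $\min_i\|U_g(U_{g_0}\varphi)-\phi_i\|_{2,y}=\min_i\|U_{gg_0}\varphi-\phi_i\|_{2,y}$, which is $<\varepsilon$ except for $y$ in a set $E_g$ with $\nu(E_g)\le\delta$. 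But $E_g = g_0^{-1}$-translate data: more precisely the bad set for $U_{g_0}\varphi$ at the element $g$ equals $g_0(B_{gg_0})$ where $B_{gg_0}$ is the bad set for $\varphi$ at $gg_0$; since $\nu$ is $G$-invariant (push-forward under the factor map, using that the $\mu_y$ disintegration is $G$-equivariant and $\nu$ is $G$-invariant on $Y$), this has measure $\le\delta$. Replacing $\{\phi_i\}$ by $\{U_{g_0}^{-1}\phi_i\}$ if one prefers representatives adapted to $g_0$ is not even necessary. Hence $U_{g_0}\varphi$ is \textit{FK a.a.p.\ for $G$}.

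\textbf{Part (3).} This is the computational heart, though still routine. Starting from the definition \eqref{eq1.1}, for $g\in\varGamma$ and $\mu$-a.e.\ $x$,
\begin{equation*}
U_g(H*_{\Y}\phi)(x) = (H*_{\Y}\phi)(g(x)) = \int_X H(g(x),x^\prime)\,\phi(x^\prime)\,\mu_{\pi(g(x))}(dx^\prime).
\end{equation*}
Now $\pi(g(x))=g(\pi(x))$ and the disintegration is $G$-equivariant, i.e.\ $\mu_{g(y)} = g_*\mu_y$, so the change of variables $x^\prime = g(z)$ gives
\begin{equation*}
U_g(H*_{\Y}\phi)(x) = \int_X H(g(x),g(z))\,\phi(g(z))\,\mu_{\pi(x)}(dz) = \int_X (U_gH)(x,z)\,(U_g\phi)(z)\,\mu_{\pi(x)}(dz),
\end{equation*}
using that $G$ acts diagonally on $X\times X$. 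Since $H$ is $\varGamma$-invariant, $U_gH = H$ for $g\in\varGamma$, and the right-hand side is exactly $H*_{\Y}(U_g\phi)(x)$. I should note that the integrability needed to justify the change of variables is the same one used to make \eqref{eq1.1} well defined, and that $U_g\phi\in\mathfrak{L}^\infty$ with the same norm as $\phi$, so $H*_{\Y}(U_g\phi)\in\mathscr{H}^\prime$ by the bound \eqref{eq1.3}.

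\textbf{Main obstacle.} None of the three parts is genuinely hard; the only place to be careful is the measure-theoretic bookkeeping in (2) and (3), namely the $G$-equivariance of the disintegration $\mu_{g(y)}=g_*\mu_y$ and the $G$-invariance of $\nu$ on $Y$. These follow from uniqueness of the disintegration together with $G$-invariance of $\mu$ and the intertwining $\pi\circ g = g\circ\pi$: applying $g$ to $\mu=\int_Y\mu_y\,\nu(dy)$ gives another disintegration over $\Y$, which by a.e.-uniqueness must agree with the canonical one, forcing both $\nu=g_*\nu$ and $g_*\mu_y=\mu_{g(y)}$ for $\nu$-a.e.\ $y$. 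Once this is in hand, all three statements drop out by the elementary arguments above.
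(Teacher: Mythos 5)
Your argument is correct and follows essentially the same route as the paper, whose entire proof is the one-line remark that (1) and (2) follow from the definition $\mathbf{C}_4$ and (3) follows at once from (\ref{eq1.1}); you have merely supplied the routine details (triangle inequality plus the Chebyshev trick for (1), $U_gU_{g_0}=U_{g_0g}$ with the same spanning family for (2), and the equivariance $\mu_{g(y)}=g_*\mu_y$ of the disintegration for (3)). One immaterial slip in (2): the bad set for $U_{g_0}\varphi$ at $g$ is exactly the bad set $B_{gg_0}$ for $\varphi$ at $gg_0$, not its $g_0$-translate, but your preceding sentence already gives the correct direct argument, so the conclusion stands.
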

\begin{proof}
(1) and (2) follow from $\mathbf{C_4}$; (3) follows at once from (\ref{eq1.1}).
\end{proof}
\subsection{Characterizing compact extensions}\label{sec2.2}%%%
When $G$ is a finitely generated free abelian group, i.e., $G\approx\mathbb{Z}^d$ for some integer $d\ge1$, then it is known that $G$ is amenable and $\mathbf{C}_1\sim\mathbf{C}_5$ are equivalent for $\varGamma=G$; cf.~\cite[Theorem~2.1 and Proposition~2.5]{FK} and also~\cite[Theorem~6.13]{Fur}.
We now will extend Furstenberg and Katznelson's characterization theorem to an \textit{lcsc} module as follows.

\begin{Thm}\label{thm2.4}%%%
Let $\X\xrightarrow[]{\textit{Id}_X}\X^\prime=(X,\mathscr{X}^\prime,\mu,G)\xrightarrow[]{\pi}\Y$ be a short factors series and let $\varGamma$ be an $R$-submodule of the \textit{lcsc} $R$-module $G$. Then $\mathbf{C}_1\sim\mathbf{C}_6$ are equivalent to each other for $\varGamma$.
\end{Thm}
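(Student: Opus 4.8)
The plan is to establish the equivalence of $\mathbf{C}_1$ through $\mathbf{C}_6$ by proving a cycle of implications, following the architecture of \cite[Theorem~2.1, Proposition~2.5]{FK} and \cite[\S6.3]{Fur} but systematically replacing the use of a single generator by the $R$-submodule $\varGamma$, and replacing the $\mathbb{Z}$-ergodic averages by averages along a weak F\o{}lner sequence in the amenable \textit{lcsc} group $\varGamma$. Concretely I would run the chain
\begin{gather*}
\mathbf{C}_6\Longrightarrow\mathbf{C}_5\Longrightarrow\mathbf{C}_1\Longrightarrow\mathbf{C}_2\Longrightarrow\mathbf{C}_3\Longrightarrow\mathbf{C}_4\Longrightarrow\mathbf{C}_6,
\end{gather*}
with $\mathbf{C}_5\Leftrightarrow\mathbf{C}_6$ essentially free: the existence and coincidence of the weak-limit and strong-limit projections $P(\varphi\otimes\bar\varphi)$ is exactly the $\mathfrak{L}^2$-mean ergodic theorem \cite{Dai-16} quoted after the list of conditions, applied to the unitary $\varGamma$-representation $\{U_g\}$ on $\mathscr{H}\otimes_{\Y}\mathscr{H}$. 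Throughout, Lemma~\ref{lem1.4} lets me pass from $\varGamma$ to a countable dense subgroup whenever I need a countable exhaustion (e.g. to pick a single conull $\mathscr{Y}$-set on which various a.e. statements hold simultaneously), which is the technical device that makes the F\o{}lner averaging compatible with the fiberwise $\|\cdot\|_{2,y}$ norms.

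For $\mathbf{C}_5\Rightarrow\mathbf{C}_1$: given $\varphi\in\mathscr{H}'$, the operator $P$ projects $\varphi\otimes\bar\varphi$ onto the $\varGamma$-invariant vectors of $\mathscr{H}'\otimes_{\Y}\mathscr{H}'$, so $H:=P(\varphi\otimes\bar\varphi)$ is a $\varGamma$-invariant, fiberwise $\mathfrak{L}^2$-bounded kernel; one computes (as in \cite[Lemma~6.10]{Fur}) that $\langle H*_{\Y}\varphi,\varphi\rangle_{2,y}=\|\varphi\|_{2,y}^4$ or an analogous positive-definite identity, whence $H*_{\Y}\varphi\ne 0$ when $\varphi\ne 0$, giving that the span in $\mathbf{C}_1$ is dense (by a standard orthogonality argument: if $\psi\perp$ all $H*_{\Y}\phi$, project $\psi$ onto $\mathscr{H}'$ and test against its own invariant kernel). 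For $\mathbf{C}_1\Rightarrow\mathbf{C}_2$: each $H*_{\Y}\phi$ with $H$ bounded $\varGamma$-invariant and $\phi$ bounded lies in $\overline{L^2_{\textit{FKap}}}$ because $U_g(H*_{\Y}\phi)=H*_{\Y}(U_g\phi)$ by Lemma~\ref{lem2.3}(3), and $\{H*_{\Y}(U_g\phi)\mid g\in\varGamma\}\subseteq H*_{\Y}\mathbf{B}_r^{2,\infty}$, which is precompact in $(\mathscr{H}',\|\cdot\|_{2,y})$ for $\nu$-a.e. $y$ by Theorem~\ref{thm1.2}; precompactness fiberwise for a.e. $y$ plus a diagonal/countability argument over a F\o{}lner-cofinal countable set yields the finite $\varepsilon$-net required by \emph{FK a.p.}, so $L^2_{\textit{FKap}}$ is dense. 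The implications $\mathbf{C}_2\Rightarrow\mathbf{C}_3\Rightarrow\mathbf{C}_4$ are soft: $\mathbf{C}_3$ follows by truncating an $\mathscr{H}'$-function to a large-measure base set where an $L^2_{\textit{FKap}}$-approximant is uniformly close (using Markov/Chebyshev on the $\nu$-integral of $\|\cdot\|_{2,y}^2$), and $\mathbf{C}_4$ is immediate from $\mathbf{C}_3$ by absorbing the small-measure complement into the exceptional set. Finally $\mathbf{C}_4\Rightarrow\mathbf{C}_6$: an \emph{FK a.a.p.} function has fiberwise-precompact $\varGamma$-orbit off a small set, so the F\o{}lner averages of $\varphi\otimes\bar\varphi$ converge strongly by mean ergodicity, and $P(\varphi\otimes\bar\varphi)=0$ would force $E_\mu(|\varphi|^2\mid\Y)=0$ hence $\varphi=0$.

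The main obstacle I expect is the \textbf{$\mathbf{C}_1\Rightarrow\mathbf{C}_2$ step combined with the fiberwise bookkeeping}: passing from "precompact in $(\mathscr{H}',\|\cdot\|_{2,y})$ for $\nu$-a.e. $y$" (which is what Theorem~\ref{thm1.2} delivers, one $y$ at a time) to a \emph{single} finite set $\phi_1,\dots,\phi_k\in\mathscr{H}$ serving as an $\varepsilon$-net \emph{simultaneously for all $g\in\varGamma$ and $\nu$-a.e. $y$} is delicate, because the null exceptional set of $y$ could a priori depend on $g$ and on the approximating kernel. This is precisely where one uses that $\varGamma$ contains a countable dense subgroup $\varGamma_0$ (Lemma~\ref{lem1.4}) so that only countably many $g$ need be controlled, together with the continuity of $g\mapsto U_g\varphi$ in $\|\cdot\|_{2,\mu}$ and a further Egoroff-type extraction to make the fiberwise convergence uniform off a set of small $\nu$-measure — and then $\mathbf{C}_4$-style tolerance of a small exceptional set, fed back through Lemma~\ref{lem1.4}, closes the gap. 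The uniformity of $H_n*_{\Y}\phi\to H*_{\Y}\phi$ over $\phi\in\mathbf{B}_r^{2,\infty}$ established in Step~2 of Theorem~\ref{thm1.2} is the key quantitative input that keeps this argument from circularity. The amenability of $G$ (hence of every $R$-submodule $\varGamma$), noted in the footnote to $\mathbf{C}_5$, is used only to guarantee the weak F\o{}lner sequence and thus the mean ergodic theorem; no structure of $R$ beyond this is needed in this section.
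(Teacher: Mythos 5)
Your cycle is the paper's own architecture reordered: $\mathbf{C}_5\Leftrightarrow\mathbf{C}_6$ via the $\mathfrak{L}^2$-mean ergodic theorem, your $\mathbf{C}_5\Rightarrow\mathbf{C}_1$ is exactly the paper's orthogonality argument for $\mathbf{C}_6\Rightarrow\mathbf{C}_1$ (take $\psi\perp$ the span, test against $H=P(\psi\otimes\bar\psi)$), and $\mathbf{C}_2\Rightarrow\mathbf{C}_3\Rightarrow\mathbf{C}_4\Rightarrow\mathbf{C}_6$ matches the paper's $\mathbf{C}_2\Rightarrow\mathbf{C}_3\Rightarrow\mathbf{C}_4\Rightarrow\mathbf{C}_5$. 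The genuine gap is in your $\mathbf{C}_1\Rightarrow\mathbf{C}_2$ step. You propose to deduce that $H*_{\Y}\phi$ is (approximately) \textit{FK a.p.\ for} $\varGamma$ from the fiberwise precompactness of $H*_{\Y}\B_r^{2,\infty}$ given by Theorem~\ref{thm1.2}, patched by ``diagonal/countability, Egoroff, and $\mathbf{C}_4$-style tolerance fed back through Lemma~\ref{lem1.4}''. This does not close: Theorem~\ref{thm1.2} produces, for each individual $y$ off a null set, a finite $\varepsilon$-net in $\|\cdot\|_{2,y}$ whose members \emph{and cardinality depend on $y$}, with no uniform bound across fibers, whereas Definition~\ref{def1.3} demands one finite family $\phi_1,\dotsc,\phi_k\in\mathscr{H}$ serving every $g\in\varGamma$ and $\nu$-a.e.\ $y$; the paper warns immediately after Definition~\ref{def1.3} that fiberwise precompactness of the orbit does \emph{not} yield FK a.p. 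Moreover Lemma~\ref{lem1.4} only transports $(\epsilon,\delta)$-estimates from a countable dense subgroup to its closure at the cost of factors $4$; it never shrinks $\delta$ to $0$, so ``tolerating a small exceptional set'' delivers at best membership in $L_{\textit{FKaap}}^2$, i.e.\ $\mathbf{C}_4$-type information, not the density of $L_{\textit{FKap}}^2$ that $\mathbf{C}_2$ requires.

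What the paper actually does at this point, and what your outline is missing, is threefold. First, it does not prove that $H*_{\Y}\phi$ itself is FK a.p., but replaces it by the modification vanishing over $\pi^{-1}[F]$, where $F=\bigcap_{g\in\varGamma}g^{-1}[E]$ is a $\varGamma$-\emph{invariant} set of small measure built from the Egoroff set $E$; this is an $\mathfrak{L}^2$-small change (so density survives) and the invariance of $F$ is what keeps the modification compatible with every $U_g$. Second, the single $g$- and $y$-independent net comes not from precompactness but from the finite-rank structure: for $H_n=\sum_{j=1}^J\psi_j\otimes\psi_j^\prime$ one has $H_n*_{\Y}\psi=\sum_j c_j(\psi)\psi_j$ with coefficients $c_j(\psi)$ constant on fibers and bounded by $\|\psi_j^\prime\|_\infty\|\phi\|_\infty$, so discretizing the coefficient vector over a finite grid gives one finite net valid simultaneously for all $y\in Y$ and all $\psi$ in the $\mathfrak{L}^\infty$-ball; the uniform estimate $\|H*_{\Y}\psi-H_n*_{\Y}\psi\|_{2,y}<\varepsilon/2$ on $Y-E$ then upgrades this to $H$. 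Third, the net obtained on $Y-E$ is promoted to $\nu$-a.e.\ $y\in Y$ by gluing: writing $Y-F=\sum_n B_n$ with $B_n\subseteq g_n^{-1}[Y-E]$ and setting $\tilde\phi_j=U_{g_n}\phi_j$ on $\pi^{-1}[B_n]$, which is precisely where the freedom $\phi_j\in\mathscr{H}$ (rather than $\mathscr{H}^\prime$) is used. Two smaller slips elsewhere: the identity $\langle H*_{\Y}\varphi,\varphi\rangle_{2,y}=\|\varphi\|_{2,y}^4$ is not the correct computation (though the orthogonality argument you describe next is, and suffices); and in $\mathbf{C}_4\Rightarrow\mathbf{C}_6$ the assertion ``$P(\varphi\otimes\bar\varphi)=0$ forces $E_\mu(|\varphi|^2|\Y)=0$'' is exactly what must be proved, by testing $P(\varphi\otimes\bar\varphi)$ against the bounded $(\delta,\varepsilon)$-spanning functions $\bar\phi_i\otimes\phi_i$ and extracting a good $g\in F_n$; it is false without the a.a.p.\ hypothesis.
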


\begin{proof}
We will prove Theorem~\ref{thm2.4} by finely modifying Furstenberg and Katznelson's framework developed in the work \cite{FK}.

$\mathbf{C}_1\Rightarrow\mathbf{C}_2$. Clearly any linear combination of functions of \textit{FK a.p. for~$\varGamma$} in $\mathscr{H}^\prime$ is also an \textit{FK a.p. for~$\varGamma$} function. In order to prove that $\mathbf{C}_1\Rightarrow\mathbf{C}_2$, it will suffice to show that by an arbitrary $\mathfrak{L}^2$-norm small modification of a function of the form $H*_{\Y}\phi$ described as in $\mathbf{C}_1$, we can obtain an \textit{FK a.p. for~$\varGamma$} function in $\mathscr{H}^\prime$. Let $\|\phi\|_\infty>0$; otherwise $H*_{\Y}\phi\equiv0$ and nothing needs to do.

According to Lemma~\ref{lem1.4} with $\delta=0$, there is no loss of generality in assuming that $\varGamma$ is countable only thought of as a subgroup of $G$.
As in Step 2 of the proof of Theorem~\ref{thm1.2}, there exists $H_n\to H$ in $\mathscr{H}^\prime\otimes_{\Y}\mathscr{H}^\prime$ where each $H_n(x,x^\prime)$ has the special form of finite linear combination $\sum_j\psi_j\otimes\psi_j^\prime$ with $\psi_j,\psi_j^\prime$ in $\mathfrak{L}^\infty(X,\mathscr{X}^\prime,\mu)$. That means that $\|H-H_n\|_{2,y}\to0$ as a sequence of functions of $y\in Y$ in $\mathfrak{L}^2(Y,\mathscr{Y},\nu)$. Passing to a subsequence if necessary, we assume without loss of generality that
\begin{equation*}
\|H-H_n\|_{2,y}\to0\ \textit{ as }n\to\infty\quad \nu\textit{-a.e. }y\in Y.
\end{equation*}
For any $\eta>0$, by Egoroff's theorem we can find a $\mathscr{Y}$-subset $E$ of $Y$ with $\nu(E)<\eta$ such that
\begin{gather*}
\|H-H_n\|_{2,y}\to0\ \textit{ as }n\to\infty\quad \textit{uniformly for }y\in Y-E.
\end{gather*}
Since $\varGamma$ is countable, we can define the $\varGamma$-invariant set
\begin{gather}
F={\bigcap}_{g\in\varGamma}g^{-1}[E]=Y-{\bigcup}_{g\in\varGamma}g^{-1}[Y-E]\in\mathscr{Y}\label{eq2.2}
\intertext{and let}
\varphi(x)=\begin{cases}H*_{\Y}\phi(x)& \textrm{if }x\not\in\pi^{-1}[F],\\ 0& \textrm{if }x\in\pi^{-1}[F].\end{cases}\label{eq2.3}
\end{gather}
Since $F\subseteq E$ and $\mu\circ\pi^{-1}=\nu$, $\nu(F)<\eta$ so that $\varphi(x)$ defined by (\ref{eq2.3}) differs from $H*\phi(x)$ only on an $\mathscr{X}^\prime$-set of $\mu$-measure less than $\eta$. It is sufficient to verify that $\varphi(x)$ is \textit{FK a.p. for~$\varGamma$}.

Let $\varepsilon>0$ be an arbitrary constant and let there hold the following assertion.
\begin{Claim}
There exists a finite set of functions $\phi_1,\dotsc,\phi_k\in \mathfrak{L}^2(X,\mathscr{X}^\prime,\mu)$ so that for any $g\in\varGamma$,
\begin{gather*}
\min_{1\le j\le k}\|U_g\varphi-\phi_j\|_{2,y}<\varepsilon
\end{gather*}
for $\nu$-\textit{a.e.} $y\in Y-E$.
\end{Claim}

Then we can find a set of functions $\tilde{\phi}_1,\dotsc,\tilde{\phi}_k$, which is $\varepsilon$-spanning $\varGamma[\varphi]$ in $(\mathfrak{L}^2(X,\mathscr{X}^\prime,\mu_y),\|\cdot\|_{2,y})$ for $\nu$-\textit{a.e.} $y\in Y$. Indeed,
we now express the $\varGamma$-invariant $\mathscr{Y}$-set
\begin{gather*}
B:={\bigcup}_{g\in\varGamma}g^{-1}[Y-E]\quad (=Y-F)
\end{gather*}
as a disjoint union $\sum_nB_n$, where $B_n\subseteq g_n^{-1}[Y-E]$ and $\varGamma=\{g_n; n=1,2,3,\dotsc\}$. We then define $\mathscr{X}$-measurable functions
\begin{equation*}
\tilde{\phi}_j(x)=\begin{cases}
0& \textrm{if }\pi(x)\in F=Y-B,\\ U_{g_n}\phi_j(x) & \textrm{if }x\in\pi^{-1}[B_n],\ n\ge1;
\end{cases}\quad j=1,\dotsc,k.
\end{equation*}
And so we can see that $\{\tilde{\phi}_1,\dotsc,\tilde{\phi}_k\}$ is $\varepsilon$-spanning $\varGamma[\varphi]$ for $\nu$-\textit{a.e.} $y\in Y$, noting that $\mu_y\circ g_n^{-1}=\mu_{g_n(y)}$ and $F,B$ both are $\varGamma$-invariant, and that $\mu_y(F)=1$ for $\nu$-\textit{a.e.} $y\in F$ and $\mu_y(B_n)=1$ for $\nu$-\textit{a.e.} $y\in B_n$.

\medskip
We now proceed to prove the above Assertion.
Since $\|H-H_n\|_{2,y}\to0$ uniformly on $Y-E$, we can find sufficiently large $n$ with $\|H-H_n\|_{2,y}<{\varepsilon}/(2\|\phi\|_\infty)$ for all $y\in Y-E$. According to (\ref{eq1.2}) it follows that for any bounded $\psi\in\mathscr{H}^\prime$ with $\|\psi\|_\infty\le\|\phi\|_\infty$,
\begin{equation}\label{eq2.4}%%%
\|H*_{\Y}\psi-H_n*_{\Y}\psi\|_{2,y}<\frac{\varepsilon}{2\|\phi\|_\infty}\cdot\|\phi\|_\infty=\frac{\varepsilon}{2}\quad \forall y\in Y-E.
\end{equation}
If
\begin{equation*}
H_n(x,x^\prime)=\sum_{j=1}^J\psi_j(x)\psi_j^\prime(x^\prime)\quad \textit{where }\psi_j,\psi_j^\prime\in \mathfrak{L}^\infty(X,\mathscr{X}^\prime,\mu)\textit{ and }1\le J<\infty
\end{equation*}
and $\psi\in\mathscr{H}^\prime$ with $\|\psi\|_\infty\le\|\phi\|_\infty$, then
$$
H_n*_{\Y}\psi=\sum_{j=1}^Jc_j(\psi)\psi_j
$$
where
$$
c_j(\psi)(x)=\int_X\psi_j^\prime(x^\prime)\psi(x^\prime)\mu_{\pi(x)}(dx^\prime)\quad \textit{with }\ |c_j(\psi)(x)|\le\|\psi_j^\prime\|_\infty \|\phi\|_\infty.
$$
Now for this set $\left\{\sum_{j=1}^Jc_j(\psi)\psi_j\,|\,\psi\in\mathscr{H}^\prime, \|\psi\|_\infty\le\|\phi\|_\infty\right\}$, it is obvious that there exists a finite set of functions $\phi_1,\dotsc,\phi_k\in \mathscr{H}^\prime$ so that for any $\psi\in\mathscr{H}^\prime$ with $\|\psi\|_\infty\le\|\phi\|_\infty$ we have
\begin{gather}\label{eq2.5}%%%
\min_{1\le i\le k}\bigg{\|}\sum_{j=1}^Jc_j(\psi)\psi_j-\phi_i\bigg{\|}_{2,y}<\frac{\varepsilon}{2}\quad \forall y\in Y.
\end{gather}
Indeed, let
\begin{gather*}
-\|\phi\|_\infty\max_{1\le j\le J}\|\psi_j^\prime\|_\infty=r_0<r_1<\dotsm<r_l=\|\phi\|_\infty\max_{1\le j\le J}\|\psi_j^\prime\|_\infty
\end{gather*}
such that $r_{i+1}-r_i$ are sufficiently small for $0\le i<l$; then
\begin{gather*}
\phi_i\in\left\{\lambda_1\psi_1+\dotsm+\lambda_J\psi_J\,|\,(\lambda_1,\dotsc,\lambda_J)\in\{r_0,r_1,\dotsc,r_l\}^J\right\}
\end{gather*}
is exactly what desired, noting that $c_j(\psi)(x)$ is a function on $Y$.
Finally combined with (\ref{eq2.1}), (\ref{eq2.4}), and (\ref{eq2.5}) yields the desired result.

$\mathbf{C}_2\Rightarrow\mathbf{C}_3$. Let $\phi\in\mathscr{H}^\prime$ and $\delta>0$. Choose a sequence $\delta_\ell\downarrow0$ as $\ell\to\infty$ with $\sum_\ell\delta_\ell<\delta$. For every $\ell\ge1$, by $\mathbf{C}_2$ there is an $\varphi_\ell\in L_{\textit{FKap}}^2(X,\mathscr{X}^\prime,\mu,G\mathrm{:}\varGamma)$ such that
\begin{gather*}
\|\phi-\varphi_\ell\|_2<\delta_\ell.
\end{gather*}
Let
\begin{gather*}
B_\ell=\left\{y\in Y\,\big{|}\,\|\phi-\varphi_\ell\|_{2,y}\ge\sqrt{\delta_\ell}\right\}\quad \textit{and}\quad B=Y-\bigcup_{\ell=1}^\infty B_\ell.
\end{gather*}
Using the identity
$$
\|\phi-\varphi_\ell\|_2=\left(\int_Y\|\phi-\varphi_\ell\|_{2,y}^2\nu(dy)\right)^{1/2},
$$
we can see that
$$\nu(B_\ell)<\delta_\ell\quad \textit{and so}\quad \nu(B)>1-\delta.$$
Finally, to check the \textit{FK a.p. for~$\varGamma$} property of $f_B$, we fix any $\varepsilon>0$ and choose some $\ell$ with $\sqrt{\delta_\ell}<\frac{1}{2}\varepsilon$. Since $\varphi_\ell\in L_{\textit{FKap}}^2(X,\mathscr{X}^\prime,\mu,G\mathrm{:}\varGamma)$, there exist functions $\phi_1,\dotsc,\phi_{k}\in\mathscr{H}$ such that for any $g\in\varGamma$,
$$
\min_{1\le i\le k}\|U_g\varphi_\ell-\phi_i\|_{2,y}<\frac{\varepsilon}{2}\quad \nu\textit{-a.e. }y\in Y;
$$
and set $\phi_{k+1}\equiv0$. Now let $g\in\varGamma$ be arbitrary. For $\nu$-\textit{a.e.}~$y\in Y$, if $y\in g^{-1}[B]$, then
\begin{equation*}%%%
\|U_g\phi_B-U_g\varphi_\ell\|_{2,y}=\|\phi_B-\varphi_\ell\|_{2,g(y)}=\|\phi-\varphi_\ell\|_{2,g(y)}<\frac{\varepsilon}{2}
\end{equation*}
so that
\begin{subequations}\label{eq2.6}%%%
\begin{align}
{\min}_{1\le i\le k}\|U_g\phi_B-\phi_i\|_{2,y}<\varepsilon;\label{eq2.6a}\\
\intertext{if $y\not\in g^{-1}[B]$, then $\phi_B=0$ as an element of $\mathscr{H}_{g(y)}^\prime$ so that $U_g\phi_B=0$ in $\mathscr{H}_y^\prime$ and further}
\|U_g\phi_B-\phi_{k+1}\|_{2,y}=0.\label{eq2.6b}
\end{align}
\end{subequations}
This verifies the property $\mathbf{C}_3$.

$\mathbf{C}_3\Rightarrow\mathbf{C}_4$. This is obvious by (\ref{eq2.6}) and noting that $\phi_B=\phi$ as an element of $\mathscr{H}_y^\prime$ for $y\in B$.

$\mathbf{C}_4\Rightarrow\mathbf{C}_5$. Let us say that $\phi_1,\dotsc,\phi_k$ is a $(\delta,\varepsilon)$-spanning set for $\phi\in\mathscr{H}^\prime$ if $\mathbf{C}_4$ holds. Without loss of generality we can assume that the $\phi_i$ are bounded functions. Then $\bar{\phi}_i\otimes \phi_i$ is in $\mathfrak{L}^\infty(X\times X,\mathscr{X}\otimes\mathscr{X},\mu\otimes_{\Y}\mu)$. Suppose $P(\phi\otimes\bar{\phi})=0$. Then for each $i=1,\dotsc,k$, \begin{gather*}
\int_{X\times X}\bar{\phi}_i\otimes \phi_i\cdot P(\phi\otimes\bar{\phi})d\mu\otimes_{\Y}\mu=0.
\end{gather*}
Using the definition of $P(\phi\otimes\bar{\phi})$ in $\mathbf{C}_5$ and Fubini's theorem, we find
\begin{gather*}
\lim_{n\to\infty}\frac{1}{m_\varGamma(F_n)}\int_{F_n}\sum_{i=1}^k\int_Y\left|\int_X\bar{\phi}_i(x)\phi(g(x))\mu_y(dx)\right|^2\nu(dy)dg=0
\end{gather*}
It follows that for $n$ sufficiently large there will exist some $g\in F_n$ for which
\begin{gather*}
\sum_{i=1}^k\int_Y\left|\int_X\bar{\phi}_i(x)\phi(g(x))\mu_y(dx)\right|^2\nu(dy)
\end{gather*}
is sufficiently small. Thus, for such $g$, but for a set of $y\in Y$ of $\nu$-measure less than $\delta$ we shall have for all $1\le i\le k$,
\begin{gather*}
\left|\int_X\bar{\phi}_i(x)\phi(g(x))\mu_y(dx)\right|^2<\varepsilon^2.
\end{gather*}
However since $\phi_1,\dotsc,\phi_k$ is a $(\delta,\varepsilon)$-spanning set for $\phi$, $\min_{1\le i\le k}\|U_g\phi-\phi_i\|_{2,y}^2<\varepsilon^2$ but for a set of $y\in Y$ of $\nu$-measure less than $\delta$. It follows that but for a set of $y\in Y$ of $\nu$-measure less than $2\delta$ we have $\|U_g\phi\|_{2,y}^2<3\varepsilon^2$. The same is then true for $\phi$ and since $\delta$ and $\varepsilon$ were arbitrary it follows that $\phi=0$ in $\mathscr{H}^\prime$.

$\mathbf{C}_5\Rightarrow\mathbf{C}_6$. This is immediate by the $\mathfrak{L}^2$-mean ergodic theorem (cf.~\cite{Dai-16}).

$\mathbf{C}_6\Rightarrow\mathbf{C}_1$. In contrast with the statement, we suppose that the set $\mathcal{F}$ of functions, which is defined to be
\begin{gather*}
\mathcal{F}=\big{\{}H*_{\Y}\phi\colon H\in \mathfrak{L}^\infty(X\times X,\mathscr{X}^\prime\otimes\mathscr{X}^\prime,\mu\otimes_{\Y}\mu)\textit{ with }gH=H\; \forall g\in\varGamma, \phi\in \mathfrak{L}^\infty(X,\mathscr{X}^\prime,\mu)\big{\}},
\end{gather*}
were not dense in $\mathscr{H}^\prime$. Then we can choose some $\psi\in \mathfrak{L}^\infty(X,\mathscr{X}^\prime,\mu)$ which is orthogonal to $\mathcal{F}$ in $\mathscr{H}^\prime$ and from the $\mathfrak{L}^2$-mean ergodic theorem~\cite{Dai-16} we form a function in $\mathscr{H}^\prime\otimes_{\Y}\mathscr{H}^\prime$ as follows:
\begin{gather*}
H(x,x^\prime)=\lim_{n\to\infty}\frac{1}{m_\varGamma(F_n)}\int_{F_n}U_g(\psi\otimes\bar{\psi})(x,x^\prime)\,dg=P(\psi\otimes\bar{\psi})(x,x^\prime)
\end{gather*}
over some weak F{\o}lner sequence $\{F_n\}_1^\infty$ in $\varGamma$ with a left Haar measure $m_\varGamma$. The function $H$ is $\varGamma$-invariant and by passing to a subsequence of $\{F_n\}_1^\infty$ we see that $\|H\|_\infty\le\|\psi\otimes\bar{\psi}\|_\infty<\infty$.
So by hypothesis we have $\psi\perp (H*_{\Y}\psi)$. Thus
$$
\int_X\left(\int_XH(x,x^\prime)\psi(x^\prime)\mu_{\pi(x)}(dx^\prime)\right)\bar{\psi}(x)\mu(dx)=0.
$$
By $\mu=\int_Y\mu_y\nu(dy)$ and $\mu_y(\pi^{-1}(y))=1$, we can obtain that
\begin{equation*}\begin{split}
0&=\int_Y\left\{\int_X\left(\int_XH(x,x^\prime)\psi(x^\prime)\mu_{\pi(x)}(dx^\prime)\right)\bar{\psi}(x)\mu_y(dx)\right\}\nu(dy)\\
&=\int_Y\left\{\int_X\int_XH(x,x^\prime)\psi(x^\prime)\bar{\psi}(x)\mu_y(dx^\prime)\mu_y(dx)\right\}\nu(dy)\\
&=\iiint H(x,x^\prime)\cdot\overline{\psi\otimes\bar{\psi}}(x,x^\prime)\mu_y(dx^\prime)\mu_y(dx)\nu(dy)\\
&=\int_{X\times X}H\cdot\overline{\psi\otimes\bar{\psi}}d\mu\otimes_{\Y}\mu.
\end{split}\end{equation*}
Therefore $H\in\mathscr{H}^\prime\otimes_{\Y}\mathscr{H}^\prime$ is orthogonal to $\psi\otimes\bar{\psi}\in\mathscr{H}^\prime\otimes_{\Y}\mathscr{H}^\prime$. Since $H$ is $\varGamma$-invariant, we obtain that $U_g(\psi\otimes\bar{\psi})\perp H$ for all $g\in\varGamma$. From this follows that $P(\psi\otimes\bar{\psi})$ which is the ergodic average of $U_g(\psi\otimes\bar{\psi})$ with respect to $\varGamma$ is orthogonal to $H$. But $P(\psi\otimes\bar{\psi})=H$; so $P(\psi\otimes\bar{\psi})=0$ \textit{a.e.} and finally by $\mathbf{C}_6$ we conclude that $\psi=0$ in $\mathscr{H}^\prime$.

The proof of Theorem~\ref{thm2.4} is thus completed.
\end{proof}

\begin{Lem}\label{lem2.5}%%%
If ${\pi}\colon\X^\prime\rightarrow\Y$ is relatively compact for $R$-submodules $\varGamma^\prime,\varGamma^{\prime\prime}\subset G$, respectively; then it is also relatively compact for $\varGamma^\prime\times\varGamma^{\prime\prime}$.
\end{Lem}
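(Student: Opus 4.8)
First I fix the reading of the statement: I take $\varGamma^\prime\times\varGamma^{\prime\prime}$ to be the internal product $\{g^\prime\circ g^{\prime\prime}\,:\,g^\prime\in\varGamma^\prime,\ g^{\prime\prime}\in\varGamma^{\prime\prime}\}$, which is the $R$-submodule of $G$ generated by $\varGamma^\prime\cup\varGamma^{\prime\prime}$ (it is closed under $\circ$ and under scalars because $G$ is abelian), and in particular every one of its elements factors as $g^\prime\circ g^{\prime\prime}$. The plan is to verify relative compactness for $\varGamma^\prime\times\varGamma^{\prime\prime}$ through condition $\mathbf{C}_4$ of Theorem~\ref{thm2.4}: it suffices to show that an arbitrary $\phi\in\mathscr{H}^\prime$ is \textit{FK a.a.p. for} $\varGamma^\prime\times\varGamma^{\prime\prime}$. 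From the hypotheses I extract, for such a $\phi$ and any $\delta>0$, a set $B\in\mathscr{Y}$ with $\nu(B)>1-\delta/2$ for which $\phi_B=1_{\pi^{-1}[B]}\phi$ is \textit{FK a.p. for} $\varGamma^\prime$ (condition $\mathbf{C}_3$ for $\varGamma^\prime$, the set $B$ depending only on $\delta$), and the fact that \emph{every} function of $\mathscr{H}^\prime$ is \textit{FK a.a.p. for} $\varGamma^{\prime\prime}$ ($\mathbf{C}_4$ for $\varGamma^{\prime\prime}$).

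A preparatory observation is that $E_\mu(\,\cdot\,|\mathscr{X}^\prime)$ is a contraction for every fibre norm $\|\cdot\|_{2,y}$: since $\pi^{-1}[\mathscr{Y}]\subseteq\mathscr{X}^\prime$, conditional Jensen gives $|E_\mu(g\,|\,\mathscr{X}^\prime)|^2\le E_\mu(|g|^2\,|\,\mathscr{X}^\prime)$, and applying $E_\mu(\,\cdot\,|\,\pi^{-1}[\mathscr{Y}])$ together with the tower property yields $\|E_\mu(g\,|\,\mathscr{X}^\prime)\|_{2,y}\le\|g\|_{2,y}$ for $\nu$-a.e.\ $y$. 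Hence any $\|\cdot\|_{2,y}$-spanning family in $\mathscr{H}$ for the $\varGamma$-orbit of a function of $\mathscr{H}^\prime$ becomes, after replacing each member by its $E_\mu(\,\cdot\,|\mathscr{X}^\prime)$, an at-least-as-good spanning family lying in $\mathscr{H}^\prime$; so both \textit{FK a.p.} and \textit{FK a.a.p.} of a function of $\mathscr{H}^\prime$ may always be witnessed by families inside $\mathscr{H}^\prime$.

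The core is then a ``net of nets'' composition. Fix $\phi\in\mathscr{H}^\prime$ and $\delta,\varepsilon>0$, and take $B,\phi_B$ as above. By the preceding remark there is a finite family $\chi_1,\dots,\chi_P\in\mathscr{H}^\prime$ with $\min_{p}\|U_{g^\prime}\phi_B-\chi_p\|_{2,y}<\varepsilon/2$ for every $g^\prime\in\varGamma^\prime$ and $\nu$-a.e.\ $y$. Since each $\chi_p\in\mathscr{H}^\prime$ is \textit{FK a.a.p. for} $\varGamma^{\prime\prime}$, for every $p$ there is a finite family $\Theta_{p,1},\dots,\Theta_{p,S_p}$ with $\min_{s}\|U_h\chi_p-\Theta_{p,s}\|_{2,y}<\varepsilon/2$ for every $h\in\varGamma^{\prime\prime}$, off a $y$-set of $\nu$-measure $<\delta/(2P)$. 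Now let $g\in\varGamma^\prime\times\varGamma^{\prime\prime}$, $g=g^\prime\circ h$; as $G$ is abelian, $U_g=U_hU_{g^\prime}$. For $\nu$-a.e.\ $y$ choose $p$ with $\|U_{g^\prime}\phi_B-\chi_p\|_{2,h(y)}<\varepsilon/2$; using $\mu_y\circ h^{-1}=\mu_{h(y)}$ and the $\nu$-invariance of $h$ this reads $\|U_g\phi_B-U_h\chi_p\|_{2,y}<\varepsilon/2$, and off the exceptional set of $\chi_p$ there is $s$ with $\|U_h\chi_p-\Theta_{p,s}\|_{2,y}<\varepsilon/2$, whence $\|U_g\phi_B-\Theta_{p,s}\|_{2,y}<\varepsilon$. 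The union over $p$ of these exceptional sets has $\nu$-measure $<\delta/2$; adjoining $Y\setminus g^{-1}[B]$ (of $\nu$-measure $\nu(Y\setminus B)<\delta/2$) and noting that on its complement $\|U_g\phi-U_g\phi_B\|_{2,y}=\|\phi-\phi_B\|_{2,g(y)}=0$, because $g(y)\in B$ forces $\phi_B=\phi$ on the support of $\mu_{g(y)}$, we conclude $\min_{p,s}\|U_g\phi-\Theta_{p,s}\|_{2,y}<\varepsilon$ off a $y$-set of $\nu$-measure $<\delta$. The family $\{\Theta_{p,s}\}$ being finite and $\delta,\varepsilon$ arbitrary, $\phi$ is \textit{FK a.a.p. for} $\varGamma^\prime\times\varGamma^{\prime\prime}$, and $\mathbf{C}_4$ gives the lemma.

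The step I expect to be the real hinge is matching the two intermediate $\mathfrak{L}^2$-spaces: relative compactness for $\varGamma^{\prime\prime}$ is a statement about functions in $\mathscr{H}^\prime=\mathfrak{L}^2(X,\mathscr{X}^\prime,\mu)$, so in order to feed the approximants $\chi_p$ coming from the \textit{FK a.p.}-for-$\varGamma^\prime$ property back into the $\varGamma^{\prime\prime}$-machinery one must know they can be taken in $\mathscr{H}^\prime$ — which is exactly what the fibrewise-contraction of $E_\mu(\,\cdot\,|\mathscr{X}^\prime)$ provides. Everything else is bookkeeping: the identity $U_g=U_hU_{g^\prime}$ (where the abelian $R$-module structure of $G$ enters and where the factorization $g=g^\prime\circ h$ into $\varGamma^\prime\times\varGamma^{\prime\prime}$ is used), the transport identity $\mu_y\circ h^{-1}=\mu_{h(y)}$ for the fibre norms, and the additivity of the small exceptional measures. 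Note that no boundedness of $\phi$ is needed; alternatively one could replace the use of $\mathbf{C}_3$ by density of $L_{\textit{FKap}}^2(X,\mathscr{X}^\prime,\mu,G{:}\varGamma^\prime)$ (condition $\mathbf{C}_2$, together with the $\mathscr{H}^\prime$-valued spanning sets produced in the proof of $\mathbf{C}_1\Rightarrow\mathbf{C}_2$) and the closedness of $L_{\textit{FKaap}}^2(X,\mathscr{X}^\prime,\mu,G{:}\varGamma^\prime\times\varGamma^{\prime\prime})$ from Lemma~\ref{lem2.3}(1).
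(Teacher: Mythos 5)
Your proof is correct and takes essentially the approach the paper intends: the paper disposes of Lemma~\ref{lem2.5} by citing the Furstenberg--Katznelson composition-of-spanning-sets argument ``using $\mathbf{C}_4$'', which is exactly your net-of-nets step combining $\mathbf{C}_3$ for $\varGamma^\prime$ with $\mathbf{C}_4$ for $\varGamma^{\prime\prime}$ and then invoking Theorem~\ref{thm2.4}. Your conditional-expectation observation, which lets the $\varGamma^\prime$-approximants be taken in $\mathscr{H}^\prime$ so that $\mathbf{C}_4$ for $\varGamma^{\prime\prime}$ can be applied to them, is a valid and worthwhile explicit fix for a point that the paper's one-line citation glosses over in the intermediate-factor setting.
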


\begin{proof}
This follows from the same argument of \cite[Proposition~2.3]{FK} or \cite[Proposition~6.14]{Fur} by using $\mathbf{C}_4$.
So we omit the details here.
\end{proof}

As to be shown by \cite[Example~7.19]{EW}, it is not true that any $\phi\in \mathfrak{L}^2(X,\mathscr{X}^\prime,\mu)$ is automatically \textit{FK a.p. for~$\varGamma$} for a compact extension $\X$ of $\Y$; however $\mathbf{C}_3$ circumvents this drawback.

Without the amenability, we can obtain the following weak implications:

\begin{Thm}\label{thm2.6}%%%
Let $\varGamma$ be an $R$-submodule of an \textit{lcsc} $R$-module $G$ and $\X\xrightarrow[]{\textit{Id}_X}\X^\prime\xrightarrow[]{\pi}\Y$ a short factors series. Then
$$\mathbf{C}_1\Rightarrow\mathbf{C}_2\Rightarrow\mathbf{C}_3\Rightarrow\mathbf{C}_4$$ for $\pi\colon\X^\prime\rightarrow\Y$.
\end{Thm}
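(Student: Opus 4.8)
The plan is to audit the proof of Theorem~\ref{thm2.4} and record the observation that the chain $\mathbf{C}_1\Rightarrow\mathbf{C}_2\Rightarrow\mathbf{C}_3\Rightarrow\mathbf{C}_4$ established there never invokes the amenability of $G$ (equivalently, of $\varGamma$): amenability enters only through the existence of a weak F{\o}lner sequence in $\varGamma$ and the $\mathfrak{L}^2$-mean ergodic theorem, and these are used solely in $\mathbf{C}_4\Rightarrow\mathbf{C}_5\Rightarrow\mathbf{C}_6\Rightarrow\mathbf{C}_1$. Thus the substance of the proof is a careful verification that every building block feeding the first three implications is available for an arbitrary \textit{lcsc} $R$-module, the hypothesis ``$G$ \textit{lcsc}'' (plus $(X,\mathscr{X})$ standard Borel) being all that is needed.

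First, for $\mathbf{C}_1\Rightarrow\mathbf{C}_2$ I would re-examine each ingredient used in Theorem~\ref{thm2.4}: Lemma~\ref{lem1.4} applied with $\delta=0$ (reduction to a countable dense subgroup $\varGamma$ of the submodule in question); Step~2 of the proof of Theorem~\ref{thm1.2} (density of the finite sums $\sum_j\psi_j\otimes\psi_j^\prime$ in $\mathscr{H}^\prime\otimes_{\Y}\mathscr{H}^\prime$ together with precompactness of $H*_{\Y}\B_r^{2,\infty}$); Egoroff's theorem applied to the sequence $\|H-H_n\|_{2,y}$; the construction of the $\varGamma$-invariant sets $F,B$ in (\ref{eq2.2}) and the modification (\ref{eq2.3}); the finite covering of the coefficient set $\{\sum_j c_j(\psi)\psi_j\}$ giving (\ref{eq2.5}); and Lemma~\ref{lem2.3}(3), i.e. the commutation (\ref{eq2.1}), which is a direct consequence of the definition (\ref{eq1.1}). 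Each of these rests only on Varadarajan's isomorphism theorem, Banach--Alaoglu, Tychonoff, separability of the fibre spaces $\mathscr{H}_y$, the dominated convergence theorem, and Egoroff's theorem — no F{\o}lner averaging anywhere — so all survive verbatim.

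Second, for $\mathbf{C}_2\Rightarrow\mathbf{C}_3$ I would reproduce the argument unchanged: choose $\delta_\ell\downarrow0$ with $\sum_\ell\delta_\ell<\delta$, approximate $\phi$ by $\varphi_\ell\in L_{\textit{FKap}}^2(X,\mathscr{X}^\prime,\mu,G\mathrm{:}\varGamma)$ with $\|\phi-\varphi_\ell\|_2<\delta_\ell$, and use the fibrewise Chebyshev estimate $\nu(B_\ell)\le\delta_\ell^{-1}\|\phi-\varphi_\ell\|_2^2<\delta_\ell$, which follows purely from $\|\phi-\varphi_\ell\|_2^2=\int_Y\|\phi-\varphi_\ell\|_{2,y}^2\nu(dy)$; the spanning-set computation (\ref{eq2.6a})--(\ref{eq2.6b}) for $\phi_B$ is again amenability-free. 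Finally $\mathbf{C}_3\Rightarrow\mathbf{C}_4$ is immediate, since for $y\in B$ one has $\phi_B=\phi$ as an element of $\mathscr{H}_y^\prime$.

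There is no genuine obstacle in this statement; the only real risk is overlooking a concealed appeal to amenability inside one of the auxiliary results cited in the first three implications — for instance in the proof of Theorem~\ref{thm1.2}, of Lemma~\ref{lem1.4}, or of Lemma~\ref{lem2.3}. Accordingly the \emph{hard part} is simply the bookkeeping: confirming that each of those lemmas was proved using only Varadarajan's theorem, standard weak compactness in separable Hilbert space, and Egoroff's and the dominated convergence theorems, so that nothing beyond the \textit{lcsc} hypothesis on $G$ is required.
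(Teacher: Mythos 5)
Your proposal is correct and matches the paper's own (implicit) treatment: the paper gives Theorem~\ref{thm2.6} no separate proof precisely because, as you observe, the implications $\mathbf{C}_1\Rightarrow\mathbf{C}_2\Rightarrow\mathbf{C}_3\Rightarrow\mathbf{C}_4$ in the proof of Theorem~\ref{thm2.4} rest only on Lemma~\ref{lem1.4}, Theorem~\ref{thm1.2}, Egoroff, dominated convergence, and the Chebyshev-type fibre estimate, while F{\o}lner averaging and the $\mathfrak{L}^2$-mean ergodic theorem enter only in $\mathbf{C}_4\Rightarrow\mathbf{C}_5\Rightarrow\mathbf{C}_6\Rightarrow\mathbf{C}_1$. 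Your audit of the auxiliary lemmas is exactly the bookkeeping the paper leaves to the reader, so nothing further is needed.
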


In addition, from Lemma~\ref{lem2.3}-(1) and Assertion in the proof of $\mathbf{C}_1\Rightarrow\mathbf{C}_2$ in Theorem~\ref{thm2.4}, we can easily obtain the following lemma.

\begin{Lem}\label{lem2.7}%%%
Let $\varGamma$ be an $R$-submodule of an \textit{lcsc} $R$-module $G$ and $\X\xrightarrow[]{\textit{Id}_X}\X^\prime\xrightarrow[]{\pi}\Y$ a short factors series. Let $H\in\mathscr{H}^\prime\otimes_{\Y}\mathscr{H}^\prime$ be a $\varGamma$-invariant bounded function and let $\phi\in\mathfrak{L}^\infty(X,\mathscr{X}^\prime,\mu)$; then $H*_{\Y}\phi$ is \textit{FK a.a.p.} for $\varGamma$ as in $\mathbf{C}_4$.
\end{Lem}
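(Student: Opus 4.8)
The plan is to read Lemma~\ref{lem2.7} directly off the construction already carried out inside the proof of the implication $\mathbf{C}_1\Rightarrow\mathbf{C}_2$ of Theorem~\ref{thm2.4}: although that argument is used there to produce a dense family of \textit{FK a.p.} functions, the construction itself is a standalone statement about an arbitrary bounded $\varGamma$-invariant kernel $H\in\mathscr{H}^\prime\otimes_{\Y}\mathscr{H}^\prime$ and an arbitrary bounded $\phi\in\mathscr{H}^\prime$. So I would fix such $H$ and $\phi$, note that $H*_{\Y}\phi\in\mathscr{H}^\prime$ with $|H*_{\Y}\phi|\le\|H\|_\infty\|\phi\|_\infty$ ($\mu$-a.e.), and for each $\eta>0$ run that construction: reduce to a countable $\varGamma$ by Lemma~\ref{lem1.4} (with $\delta=0$), approximate $H$ in $\mathscr{H}^\prime\otimes_{\Y}\mathscr{H}^\prime$ by finite-rank kernels, apply Egoroff's theorem to get $E\in\mathscr{Y}$ with $\nu(E)<\eta$ on whose complement the approximation is uniform in $y$, form the $\varGamma$-invariant set $F=\bigcap_{g\in\varGamma}g^{-1}[E]$ (so $\nu(F)\le\nu(E)<\eta$), and set $\varphi_\eta:=H*_{\Y}\phi$ off $\pi^{-1}[F]$ and $\varphi_\eta:=0$ on $\pi^{-1}[F]$. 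The Assertion proved there, \emph{together with} the subsequent passage from an $\varepsilon$-spanning set on $Y-E$ to the functions $\tilde\phi_1,\dots,\tilde\phi_k\in\mathscr{H}$ $\varepsilon$-spanning $\varGamma[\varphi_\eta]$ for $\nu$-a.e.\ $y\in Y$ (via the $\varGamma$-orbit decomposition of $Y-F$), is precisely the assertion $\varphi_\eta\in L_{\textit{FKap}}^2(X,\mathscr{X}^\prime,\mu,G\mathrm{:}\varGamma)$; since an \textit{FK a.p.\ for~$\varGamma$} function is a fortiori \textit{FK a.a.p.\ for~$\varGamma$}, this gives $\varphi_\eta\in L_{\textit{FKaap}}^2(X,\mathscr{X}^\prime,\mu,G\mathrm{:}\varGamma)$ for every $\eta>0$.

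I would then let $\eta\downarrow0$. Since $\mu\circ\pi^{-1}=\nu$ and $H*_{\Y}\phi$ is bounded,
\[
\|H*_{\Y}\phi-\varphi_\eta\|_{2,\mu}^2=\int_{\pi^{-1}[F]}|H*_{\Y}\phi|^2\,d\mu\le\|H\|_\infty^2\|\phi\|_\infty^2\,\nu(F)<\|H\|_\infty^2\|\phi\|_\infty^2\,\eta,
\]
so $\varphi_\eta\to H*_{\Y}\phi$ in $\big(\mathfrak{L}^2(X,\mathscr{X}^\prime,\mu),\|\cdot\|_2\big)$. By Lemma~\ref{lem2.3}(1) the set $L_{\textit{FKaap}}^2(X,\mathscr{X}^\prime,\mu,G\mathrm{:}\varGamma)$ is a closed subspace of $\big(\mathfrak{L}^2(X,\mathscr{X}^\prime,\mu),\|\cdot\|_2\big)$, hence it contains the limit $H*_{\Y}\phi$; that is, $H*_{\Y}\phi$ is \textit{FK a.a.p.\ for~$\varGamma$} as in $\mathbf{C}_4$, which is the conclusion of the lemma. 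A slightly more hands-on variant avoids the closedness lemma: given $\delta,\varepsilon>0$ choose $\eta<\delta$ and take the $\tilde\phi_j$ $\varepsilon$-spanning $\varGamma[\varphi_\eta]$; since $F$ is $\varGamma$-invariant, for each $g\in\varGamma$ and $\nu$-a.e.\ $y\notin F$ we have $g(y)\notin F$ and $\mu_y\circ g^{-1}=\mu_{g(y)}$, so $U_g\varphi_\eta=U_g(H*_{\Y}\phi)$ in $\mathscr{H}_y^\prime$, whence $\min_j\|U_g(H*_{\Y}\phi)-\tilde\phi_j\|_{2,y}<\varepsilon$ outside a set of $y$ of $\nu$-measure $\le\nu(F)<\delta$, which is exactly $\mathbf{C}_4$.

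I do not anticipate a genuine obstacle here; the only point requiring care is to quote from the proof of Theorem~\ref{thm2.4} the \emph{full} conclusion ``$\varphi_\eta$ is \textit{FK a.p.\ for~$\varGamma$}'' rather than the intermediate Assertion about $Y-E$ alone, and to flag that the $\varGamma$-invariance of $F$ — built into its definition as $\bigcap_{g\in\varGamma}g^{-1}[E]$ — is what both legitimises the extension of the spanning set over all of $Y$ and powers the alternative argument above. Everything else reduces to the elementary bound $\|H*_{\Y}\phi-\varphi_\eta\|_2\le\|H\|_\infty\|\phi\|_\infty\sqrt{\eta}$ together with Lemma~\ref{lem2.3}(1).
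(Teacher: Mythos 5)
Your proposal is correct and is essentially the paper's own argument: the paper proves Lemma~\ref{lem2.7} precisely by invoking the construction (the Assertion and the passage to the functions $\tilde\phi_j$) from the proof of $\mathbf{C}_1\Rightarrow\mathbf{C}_2$ in Theorem~\ref{thm2.4}, which shows the small modification of $H*_{\Y}\phi$ is \textit{FK a.p.\ for }$\varGamma$, together with Lemma~\ref{lem2.3}-(1) to pass to $H*_{\Y}\phi$ itself. Your fleshed-out version, including the bound $\|H*_{\Y}\phi-\varphi_\eta\|_2\le\|H\|_\infty\|\phi\|_\infty\sqrt{\eta}$ and the direct $\mathbf{C}_4$ variant using the $\varGamma$-invariance of $F$, is a faithful (and slightly more explicit) rendering of that same route.
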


The following lemma will be needed in proving our dichotomy theorem in $\S\ref{sec4.1}$.

\begin{Lem}\label{lem2.8}%%%
Let $K$ be a compact subset of the \textit{lcsc} $R$-module $G$, $\varGamma$ an $R$-submodule of $G$, and $\X\xrightarrow[]{\textit{Id}_X}\X^\prime\xrightarrow[]{\pi}\Y$ a short factors series. If
\begin{gather*}
L_{\textit{FKaap}}^2(X,\mathscr{X}^\prime,\mu, G\mathrm{:}\varGamma)=\mathfrak{L}^2(X,\mathscr{X}^\prime,\mu),
\end{gather*}
then each $\varphi\in\mathfrak{L}^2(X,\mathscr{X}^\prime,\mu)$ is \textit{FK a.a.p.} for $K\times\varGamma$.
\end{Lem}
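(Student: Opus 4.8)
The plan is to exploit the compactness of $K$ to reduce the \textit{FK a.a.p.} assertion for $K\times\varGamma$ to finitely many instances of the hypothesis for $\varGamma$, moving between the global norm $\|\cdot\|_{2,\mu}$ and the fibrewise norms $\|\cdot\|_{2,y}$ by Chebyshev's inequality, exactly in the spirit of the proof of Lemma~\ref{lem2.1}. Throughout, an element $g\in K\times\varGamma$ will be written $g=k\gamma$ with $k\in K$ and $\gamma\in\varGamma$, which causes no ambiguity since $G$ is abelian.

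First I would fix $\varphi\in\mathfrak{L}^2(X,\mathscr{X}^\prime,\mu)$ together with $\delta,\varepsilon>0$, and record the two structural facts that make everything work: because $\mathscr{X}^\prime$ is $G$-invariant ($k^{-1}[\mathscr{X}^\prime]\subseteq\mathscr{X}^\prime$) and each $k\in G$ preserves $\mu$, every translate $U_k\varphi=\varphi\circ k$ again lies in $\mathfrak{L}^2(X,\mathscr{X}^\prime,\mu)$; and since $g\mapsto U_g\varphi$ is continuous from $G$ into $(\mathscr{H},\|\cdot\|_{2,\mu})$ (the fact recalled at the end of $\S\ref{sec1.2}$ via Varadarajan's theorem), the set $\{U_k\varphi\colon k\in K\}$ is compact, hence totally bounded, in $\mathscr{H}$. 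I would then pick finitely many $k_1,\dots,k_m\in K$ such that every $k\in K$ satisfies $\|U_k\varphi-U_{k_i}\varphi\|_{2,\mu}^2<\varepsilon^2\delta/16$ for some $i$, and apply the hypothesis $L_{\textit{FKaap}}^2(X,\mathscr{X}^\prime,\mu,G\mathrm{:}\varGamma)=\mathfrak{L}^2(X,\mathscr{X}^\prime,\mu)$ to each of the finitely many functions $U_{k_i}\varphi$, obtaining for every $i$ a finite family $\phi_1^{(i)},\dots,\phi_{n_i}^{(i)}\in\mathscr{H}$ which is $(\delta/2,\varepsilon/2)$-spanning for $U_{k_i}\varphi$ in the sense of $\mathbf{C}_4$.

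The combination step is then routine. Given $g=k\gamma$, I would choose $i$ with $\|U_k\varphi-U_{k_i}\varphi\|_{2,\mu}^2<\varepsilon^2\delta/16$. Since $G$ is abelian, $U_g\varphi-U_\gamma U_{k_i}\varphi=U_\gamma(U_k\varphi-U_{k_i}\varphi)$, and as $U_\gamma$ is unitary on $\mathscr{H}$ this difference has the same $\mathfrak{L}^2(\mu)$-norm, so $\int_Y\|U_g\varphi-U_\gamma U_{k_i}\varphi\|_{2,y}^2\,d\nu<\varepsilon^2\delta/16$; by Chebyshev's inequality this forces $\|U_g\varphi-U_\gamma U_{k_i}\varphi\|_{2,y}<\varepsilon/2$ for $y$ outside a set of $\nu$-measure $<\delta/2$. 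On the other hand, by the $(\delta/2,\varepsilon/2)$-spanning property applied to $\gamma$, there is some $j$ with $\|U_\gamma U_{k_i}\varphi-\phi_j^{(i)}\|_{2,y}<\varepsilon/2$ off another set of $\nu$-measure $<\delta/2$. Amalgamating the two exceptional sets and using the triangle inequality for $\|\cdot\|_{2,y}$ gives
$$
\min_{\substack{1\le i\le m\\ 1\le j\le n_i}}\|U_g\varphi-\phi_j^{(i)}\|_{2,y}<\varepsilon\qquad\textrm{but for a set of }y\in Y\textrm{ of }\nu\textrm{-measure}<\delta.
$$
As the finite family $\{\phi_j^{(i)}\}_{i,j}\subset\mathscr{H}$ was produced before $g$ was chosen, this is precisely the \textit{FK a.a.p. for $K\times\varGamma$} property of $\varphi$.

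I do not expect a genuine obstacle here. The only two points requiring a little care are: in $\mathbf{C}_4$ the exceptional $y$-set is allowed to depend on $g$, so the two exceptional sets above can be merged freely; and $K$ is merely a compact subset of $G$, not a sub-semigroup, so the argument has to be run through the continuity/total-boundedness step just once and cannot be organized as an iteration. (One must also make sure the hypothesis is applicable to each $U_{k_i}\varphi$, which is exactly where the $G$-invariance of $\mathscr{X}^\prime$ enters.)
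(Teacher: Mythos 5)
Your argument is correct, but it follows a route that differs in a genuine way from the paper's. The paper first invokes Theorem~\ref{thm2.4} ($\mathbf{C}_2\Leftrightarrow\mathbf{C}_4$) together with Lemma~\ref{lem2.3}-(1) to reduce to the case of a genuinely \textit{FK a.p. for $\varGamma$} function $\varphi$ (density of such functions plus closedness of the a.a.p.\ class), and then applies the compactness/continuity argument not to $\varphi$ but to the finitely many spanning functions: it covers $\Phi=\{U_f\phi_j\colon f\in K,\,1\le j\le J\}$ by finitely many balls $B_{\epsilon/3}(\psi_i)$ in $(\mathscr{H},\|\cdot\|_{2,\mu})$, converts the global closeness $\|U_f\phi_j-\psi_i\|_{2,\mu}<\epsilon$ into fibrewise closeness off a set of measure $<\delta/J$ by a Chebyshev-type choice of $\epsilon$, and combines this with the change-of-fibre identity $\|U_f(U_g\varphi-\phi_j)\|_{2,y}=\|U_g\varphi-\phi_j\|_{2,f(y)}$ and the a.e.\ covering $Y=Y_1\cup\dotsm\cup Y_J$. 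You instead translate $\varphi$ itself by $K$, apply the a.a.p.\ hypothesis directly to each translate $U_{k_i}\varphi$ (legitimate precisely because $\mathscr{X}^\prime$ is $G$-invariant and $\mu$ is preserved, as you note), and replace the change-of-fibre step by unitarity of $U_\gamma$ in the global norm followed by Chebyshev. Both proofs share the same two ingredients --- compactness of $K$ via continuity of $g\mapsto U_g\varphi$, and passage from $\|\cdot\|_{2,\mu}$ to $\|\cdot\|_{2,y}$ --- but yours is more self-contained: it does not use the equivalence $\mathbf{C}_2\Leftrightarrow\mathbf{C}_4$ (hence none of the amenability-based machinery of Theorem~\ref{thm2.4}) nor the exceptional-set-free spanning available only after reduction to FK a.p.\ functions, at the price of needing the hypothesis for every translate $U_{k_i}\varphi$ rather than for $\varphi$ alone; the paper's reduction, on the other hand, buys it a spanning family valid for $\nu$-a.e.\ $y$ simultaneously, so that all exceptional mass comes from the single covering of $\Phi$.
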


\begin{proof}
In view of Theorem~\ref{thm2.4} (i.e. $\mathbf{C}_2\Leftrightarrow\mathbf{C}_4$) and Lemma~\ref{lem2.3}-(1), it is sufficient to show that if $\varphi\in L_{\textit{FKap}}^2(X,\mathscr{X}^\prime,G\mathrm{:}\varGamma)$ then $\varphi$ is \textit{FK a.a.p. for $K\times\varGamma$}. Now let $\varphi\in L_{\textit{FKap}}^2(X,\mathscr{X}^\prime,G\mathrm{:}\varGamma)$ and $\delta>0,\varepsilon>0$ be any given. Take $\phi_1,\dotsc,\phi_J\in\mathscr{H}$ so that
$$
\min_{1\le j\le J}\|U_g\varphi-\phi_j\|_{2,y}<\frac{\varepsilon}{3}\quad (\nu\textit{-a.e. }y\in Y).
$$
Next we can find some $\epsilon=\epsilon(\delta,\varepsilon,J)>0$ such that
$$
\psi\in\mathscr{H}^\prime,\ \|\psi\|_{2,\mu}<\epsilon\quad\Rightarrow\quad\|\psi\|_{2,y}<\frac{\varepsilon}{3}\quad \textit{but for a set of }y\in Y\textit{ of }\nu\textit{-measure less than }\frac{\delta}{J}.
$$
Since
$$
\Phi=\{U_g\phi_j; g\in K, 1\le j\le J\}
$$
is compact in $\mathscr{H}$, we can find $\frac{\epsilon}{3}$-balls
$$
B_{\epsilon/3}(\psi_1), \dotsc, B_{\epsilon/3}(\psi_K)
$$
in $\mathscr{H}$, which cover $\Phi$. Now for any $f\in K, g\in\varGamma$  we have
\begin{equation*}\begin{split}
\min_{1\le i\le K}\|U_{fg}\varphi-\psi_i\|_{2,y}&\le\min_{1\le i\le K}\left\{\|U_f(U_g\varphi)-U_f\phi_j\|_{2,y}+\|U_f\phi_j-\psi_i\|_{2,y}\right\}\\
&=\min_{1\le i\le K}\left\{\|U_g\varphi-\phi_j\|_{2,f(y)}+\|U_f\phi_j-\psi_i\|_{2,y}\right\}\\
&\le\|U_g\varphi-\phi_j\|_{2,f(y)}+\min_{1\le i\le K}\|U_f\phi_j-\psi_i\|_{2,y}.
\end{split}\end{equation*}
Set
$$
Y_j=\left\{y\in Y\,\big{|}\,\|U_g\varphi-\phi_j\|_{2,f(y)}<\frac{\varepsilon}{3}\right\}.
$$
Clearly, $Y=Y_1\cup Y_2\cup\dotsm\cup Y_J$ ($\nu$-mod 0).
Thus, for $y\in Y_j$,
$$
\min_{1\le i\le K}\|U_{fg}\varphi-\psi_i\|_{2,y}\le\frac{\varepsilon}{3}+\frac{\varepsilon}{3}\quad \textit{but for a set of }y\in Y_j\textit{ of }\nu\textit{-measure less than }\frac{\delta}{J}.
$$
This implies that
$$
\min_{1\le i\le K}\|U_{fg}\varphi-\psi_i\|_{2,y}<\varepsilon\quad \textit{but for a set of }y\in Y\textit{ of }\nu\textit{-measure less than }\delta.
$$
Note that the choice of $\psi_1,\dotsc,\psi_K\in\mathscr{H}$ is independent of $f\in K,g\in\varGamma$.

The proof of Lemma~\ref{lem2.8} is thus completed.
\end{proof}

Finally we note that by the same arguments, we can see that Theorem~\ref{thm2.4} holds for any \textit{lcsc} amenable group $G$ and amenable subgroup $\varGamma$ without the $R$-module structure of $G$.

%%%%%%%%%%%%%%%%%%%%%%%%%%%%%%%%%%%%%%%%%%%%%%%%%%%%%%%
%%%%%%%%%%%%%%%%%%%%%%%%%%%%%%%%%%%%%%%%%%%%%%%%%%%%%%%
\section{Alternating theorems for locally compact second countable modules}\label{sec3}%%%
This section is devoted to proving alternating theorems for \textit{lcsc} $R$-modules which generalize Furstenberg and Katznelson's theorem using slightly different methods. We say $\varGamma$ is a \textit{nontrivial} $R$-submodule of an \textit{lcsc} $R$-module $G$ over a ring $(R,+,\cdot)$ if $\varGamma$ is an $R$-submodule with $\varGamma\not=\{I\}$.
\begin{itemize}
\item Let $\X=(X,\mathscr{X},\mu,G)\xrightarrow[]{\textit{Id}_X}\X^\prime=(X,\mathscr{X}^\prime,\mu,G)\xrightarrow[]{\pi}\Y=(Y,\mathscr{Y},\nu,G)$ be a short factors series, where
    \begin{itemize}
    \item $(X,\mathscr{X},\mu)$ is a standard Borel probability space.
    \end{itemize}
\end{itemize}
Differently with \cite{FK,Fur,EW}, we will work on the intermediate factor $\X^\prime$, not directly on $\X$, as in $\S\ref{sec2}$.
\subsection{Totally relatively weak-mixing extensions}%%%%
Recall from the viewpoint of group as in \cite{FK, Fur} that $\pi\colon\X^\prime\rightarrow\Y$ is referred to as \textit{relatively weak-mixing for an element $g$} in $G$ if every $g$-invariant (or equivalently $\{g^n\,|\,n\in\mathbb{Z}\}$-invariant) function $H(x,x^\prime)$ in $\mathscr{H}^\prime\otimes_{\Y}\mathscr{H}^\prime$ is a function on $(Y,\mathscr{Y},\nu)$ via the factor $G$-map
$${\pi\times_{\Y}\pi}\colon X\times X\rightarrow Y.$$
Let $\varGamma$ be a subgroup of $G$; then we say $\pi\colon\X^\prime\rightarrow\Y$ is \textit{(totally) relatively weak-mixing for $\varGamma$} if $\pi\colon\X^\prime\rightarrow\Y$ is relatively weak-mixing for every $g, g\not=I$, in $\varGamma$ (cf.~\cite[Def.~6.3]{Fur}).

We now generalize this to modules case as follows:

\begin{defn}\label{def3.1}%%%
Let $\varGamma$ be a nontrivial subset of the \textit{lcsc} $R$-module $G$ and $g\in G, g\not=I$. Then $\pi\colon\X^\prime\rightarrow\Y$ is said to be
\begin{enumerate}
\item[$(a)$] \textit{relatively weak-mixing for $g$} if every $\langle g\rangle_R$-invariant function $H(x,x^\prime)\in\mathscr{H}^\prime\otimes_{\Y}\mathscr{H}^\prime$ is a function on $(Y,\mathscr{Y},\nu)$ via ${\pi\times_{\Y}\pi}\colon X\times X\rightarrow Y$.
\item[$(b)$] \textit{totally relatively weak-mixing for $\varGamma$} if ${\pi}\colon\X^\prime\rightarrow\Y$ is relatively weak-mixing for each $g$ in $\varGamma$ with $g\not=I$ under the sense of $(a)$;

\item[$(c)$] \textit{jointly relatively weak-mixing for $\varGamma$} if every $\langle\varGamma\rangle_R$-invariant function $H(x,x^\prime)\in\mathscr{H}^\prime\otimes_{\Y}\mathscr{H}^\prime$ is a function on $(Y,\mathscr{Y},\nu)$ via ${\pi\times_{\Y}\pi}\colon X\times X\rightarrow Y$. See \cite{Z76,Gla} for $(R,+,\cdot)=(\mathbb{Z},+,\cdot)$ and $\X=\X^\prime$.
\end{enumerate}
\end{defn}

Since every $\langle\varGamma\rangle_R$-invariant function is $\langle g\rangle_R$-invariant for each $g\in\varGamma$, a totally relatively weak-mixing extension must be a jointly relatively weak-mixing extension for $\varGamma$; but the converse does not need to be true.

\begin{remark}
Given any $T\not=I$ in $G$, if $(X,\mathscr{X}^\prime,\mu,T)$ is weakly mixing, then $(X,\mathscr{X}^\prime,\mu,\langle T\rangle_\mathbb{Z})$ is totally weak-mixing (cf.~\cite[Proposition~4.7]{Fur}). However, this is never the case for extensions.
\begin{itemize}
\item If $\pi\colon\X^\prime\rightarrow\Y$ is relatively weak-mixing for $g$ ($g\not=I$), then it is not necessarily totally relatively weak-mixing for $\langle g\rangle_R$ unless $Rr=R$ for each $r\not=0$ like $R$ to be a field.
\end{itemize}
That is to say, ``$\pi\colon\X^\prime\rightarrow\Y$ is relatively weak-mixing for $g\not=I$'' $\not=$ ``$\pi\colon\X^\prime\rightarrow\Y$ is totally relatively weak-mixing for $\langle g\rangle_R$'' in general.
\end{remark}

\begin{prop}
Let $\varGamma$ be a dense subgroup of an \textit{lcsc} group $G$. Then $\pi\colon\X^\prime\rightarrow\Y$ is jointly relatively ergodic for $G$ if and only if so is $\pi\colon(X,\mathscr{X}^\prime,\mu,\varGamma)\rightarrow(Y,\mathscr{Y},\nu,\varGamma)$ for $\varGamma$.
\end{prop}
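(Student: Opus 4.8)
The plan is to prove the two implications separately; the nontrivial content is confined to one direction and rests on the strong continuity of the Koopman representation recalled at the end of $\S\ref{sec1.2}$.

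For the $(\Leftarrow)$ direction, suppose $\pi\colon(X,\mathscr{X}^\prime,\mu,\varGamma)\rightarrow(Y,\mathscr{Y},\nu,\varGamma)$ is jointly relatively ergodic for $\varGamma$. Let $f\in\mathfrak{L}^\infty(X,\mathscr{X}^\prime,\mu)$ be $G$-invariant. Since $\varGamma\subseteq G$, the function $f$ is a fortiori $\varGamma$-invariant, hence by hypothesis it is $\pi^{-1}[\mathscr{Y}]$-measurable ($\mu$-mod $0$), i.e.\ a function on $\Y$. No use of density is needed here, and this direction holds for an arbitrary subgroup $\varGamma$.

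For the $(\Rightarrow)$ direction, suppose $\pi\colon\X^\prime\rightarrow\Y$ is jointly relatively ergodic for $G$. Let $f\in\mathfrak{L}^\infty(X,\mathscr{X}^\prime,\mu)$ be $\varGamma$-invariant. Because $(X,\mathscr{X},\mu)$ is a standard Borel probability space we have $f\in\mathscr{H}=\mathfrak{L}^2(X,\mathscr{X},\mu)$, and by Varadarajan's isomorphism theorem together with the $G$-invariance of $\mu$ (as recalled in $\S\ref{sec1.2}$) the map $g\mapsto U_gf$ is continuous from $G$ into $(\mathscr{H},\|\cdot\|_{2,\mu})$. Fix $g\in G$ and, using density of $\varGamma$ in $G$, choose $\gamma_n\in\varGamma$ with $\gamma_n\to g$. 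Then $U_{\gamma_n}f\to U_gf$ in $\mathscr{H}$, while $U_{\gamma_n}f=f$ for every $n$ by $\varGamma$-invariance; hence $U_gf=f$ in $\mathscr{H}$, that is, $f\circ g=f$ $\mu$-a.e. As $g\in G$ was arbitrary, $f$ is $G$-invariant, so by hypothesis $f$ is a function on $\Y$. This shows $\pi\colon(X,\mathscr{X}^\prime,\mu,\varGamma)\rightarrow(Y,\mathscr{Y},\nu,\varGamma)$ is jointly relatively ergodic for $\varGamma$, and $\varphi\in\mathfrak{L}^\infty$ being $\varGamma$-invariant is indeed captured correctly since $\mathfrak{L}^\infty(X,\mathscr{X}^\prime,\mu)\subseteq\mathscr{H}$.

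The only point demanding a word of care is that $\varGamma$-invariance is an almost-everywhere statement for each individual $\gamma\in\varGamma$, which is precisely the form needed to pass to the $\mathfrak{L}^2$-limit along $\gamma_n\to g$; I do not anticipate any genuine obstacle here, the proposition being essentially a continuity-plus-density argument. It is perhaps worth remarking explicitly that the argument uses only that $\varGamma$ is a (dense) subgroup and never the $R$-module structure, which is why the statement is phrased for lcsc groups rather than modules.
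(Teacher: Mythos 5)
Your proof is correct and follows essentially the same route as the paper: the paper likewise reduces the proposition to the equivalence of $G$-invariance and $\varGamma$-invariance of $\mathfrak{L}^2$ (equivalently $\mathfrak{L}^\infty$) functions, which it settles by the strong continuity of $g\mapsto U_g\varphi$ together with density of $\varGamma$. You have merely written out in full the trivial inclusion direction and the limit argument that the paper compresses into one line.
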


\begin{proof}
It is sufficient to show that for any $\varphi(x)\in \mathfrak{L}^2(X,\mathscr{X}^\prime,\mu)$, $\varphi(x)$ is (\textit{$\mu$-a.e.}) $G$-invariant if and only if it is (\textit{$\mu$-a.e.}) $\varGamma$-invariant. This is obvious since $g\mapsto U_g\varphi$ is continuous from $G$ to $\mathfrak{L}^2(X,\mathscr{X}^\prime,\mu)$.
\end{proof}

\subsection{Joint case}%%%
Our theorem below is a generalization of Furstenberg and Katznelson's alternating theorem by only considering the $\mathbb{Z}$-module $\mathbb{Z}^r$ and $\varGamma=\langle T\rangle_\mathbb{Z}$ for $T\in G$ with $T\not=I$ (cf.~\cite[Proposition~2.2]{FK}, \cite[Theorem~6.15]{Fur} and \cite[Theorem~7.21]{EW}).
With Theorem~\ref{thm2.4} at hands, now we may independently prove our first alternating theorem as follows:

\begin{Thm}[Alternating Theorem~I]\label{thm3.4}%%%
Let $\varGamma$ be a nontrivial $R$-submodule of an \textit{lcsc} $R$-module $G$, and $\X\xrightarrow[]{\textit{Id}_X}\X^\prime\xrightarrow[]{\pi}\Y$ a short factors series, where $\pi\colon\X^\prime\rightarrow\Y$ is nontrivial. Then either
\begin{enumerate}
\item[$(1)$] $\pi\colon\X^\prime\rightarrow\Y$ is jointly relatively weak-mixing for $\varGamma$;
\end{enumerate}
or
\begin{enumerate}
\item[$(2)$] there exists an intermediate factor $\X^{\prime\prime}=(X,\mathscr{X}^{\prime\prime},\mu,G)$ between $\X^\prime$ and $\Y$ such that:
\begin{itemize}
    \item $\pi\colon\X^{\prime\prime}\rightarrow\Y$ is nontrivial, relatively compact for $\varGamma$;
    \item $\{x\}$ is an $\mathscr{X}^{\prime\prime}$-set for each $x\in X$;
    \item $\textit{Id}_X\colon\X^\prime\rightarrow\X^{\prime\prime}$.
    \end{itemize}
\end{enumerate}
\end{Thm}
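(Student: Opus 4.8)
The plan is to mimic the classical Furstenberg–Katznelson dichotomy (cf.~\cite[Proposition~2.2]{FK}, \cite[Theorem~6.15]{Fur}), using the characterization $\mathbf{C}_1\sim\mathbf{C}_6$ of relative compactness available from Theorem~\ref{thm2.4}. First I would dispose of the easy branch: if $\pi\colon\X^\prime\rightarrow\Y$ is jointly relatively weak-mixing for $\varGamma$, i.e.~every $\langle\varGamma\rangle_R$-invariant function in $\mathscr{H}^\prime\otimes_{\Y}\mathscr{H}^\prime$ is $(\pi\times_{\Y}\pi)$-measurable, then alternative $(1)$ holds and there is nothing to prove. So assume this fails: there exists an $\langle\varGamma\rangle_R$-invariant $H\in\mathscr{H}^\prime\otimes_{\Y}\mathscr{H}^\prime$ which is \emph{not} a function on $\Y$ via $\pi\times_{\Y}\pi$. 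The goal is to manufacture from this the intermediate factor $\X^{\prime\prime}$ of alternative $(2)$.

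The key step is to define $\X^{\prime\prime}$ via the $\sigma$-algebra generated by all convolutions $H*_{\Y}\phi$ where $H$ ranges over \emph{bounded} $\langle\varGamma\rangle_R$-invariant (equivalently, since $H$ is invariant under the generated submodule, $\varGamma$-invariant) functions in $\mathscr{H}^\prime\otimes_{\Y}\mathscr{H}^\prime$ and $\phi$ over bounded functions in $\mathscr{H}^\prime$, together with $\pi^{-1}[\mathscr{Y}]$. Concretely, set $\mathscr{X}^{\prime\prime}$ to be the smallest $\sigma$-subalgebra of $\mathscr{X}^\prime$ containing $\pi^{-1}[\mathscr{Y}]$ with respect to which every such $H*_{\Y}\phi$ is measurable. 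By Lemma~\ref{lem2.3}-(3), $U_g(H*_{\Y}\phi)=H*_{\Y}(U_g\phi)$ for $g\in\varGamma$, and the continuity of $g\mapsto U_g\phi$ together with density of $\varGamma$-type subgroups (Lemma~\ref{lem1.4}) upgrades this to $G$-invariance of $\mathscr{X}^{\prime\prime}$; since $\pi^{-1}[\mathscr{Y}]\subseteq\mathscr{X}^{\prime\prime}\subseteq\mathscr{X}^\prime$, the short factors series $\X^\prime\xrightarrow{\textit{Id}_X}\X^{\prime\prime}\xrightarrow{\pi}\Y$ makes sense. Relative compactness of $\pi\colon\X^{\prime\prime}\rightarrow\Y$ for $\varGamma$ is then exactly condition $\mathbf{C}_1$ applied to $\X^{\prime\prime}$: the functions $H*_{\Y}\phi$ span a dense subset of $\mathscr{H}^{\prime\prime}=\mathfrak{L}^2(X,\mathscr{X}^{\prime\prime},\mu)$ essentially by construction (one checks that products and uniform limits of such convolutions, which generate $\mathscr{X}^{\prime\prime}$-measurable bounded functions, are $\mathfrak{L}^2$-approximable by further convolutions — this is where Lemma~\ref{lem2.7}, Step~2 of Theorem~\ref{thm1.2}, and an algebra argument enter). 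Nontriviality of $\pi\colon\X^{\prime\prime}\rightarrow\Y$ follows because the failure of joint relative weak-mixing furnishes a bounded $\varGamma$-invariant $H$ with $H*_{\Y}\phi$ not $\pi^{-1}[\mathscr{Y}]$-measurable for a suitable bounded $\phi$ (by a standard disintegration argument: if $H*_{\Y}\phi$ were always on $\Y$, then $H$ itself would be on $\Y$ via $\pi\times_{\Y}\pi$).

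Finally, the condition that $\{x\}\in\mathscr{X}^{\prime\prime}$ for every $x\in X$ and the identification $\textit{Id}_X\colon\X^\prime\rightarrow\X^{\prime\prime}$ are built into the construction: $\mathscr{X}^{\prime\prime}$ is a $\sigma$-subalgebra of $\mathscr{X}^\prime\subseteq\mathscr{X}$, and since $(X,\mathscr{X})$ is standard Borel, one arranges $\mathscr{X}^{\prime\prime}$ to separate points by intersecting with a countable point-separating family in $\mathscr{X}$ (this is consistent with the form of the intermediate factors promised in Theorem~0.2, where all such factors carry the point classes and $\pi_\xi=\textit{Id}_X$). The main obstacle I anticipate is the density assertion underlying $\mathbf{C}_1$ for $\X^{\prime\prime}$ — namely verifying that the $\sigma$-algebra generated by the convolutions $H*_{\Y}\phi$ is \emph{not larger} than the $\mathfrak{L}^2$-closure of their linear span, so that relative compactness genuinely holds rather than merely "compactness of a generating set". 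This requires showing that the family $\{H*_{\Y}\phi\}$ is closed (up to $\mathfrak{L}^2$-approximation and bounded modification) under multiplication: given two $\varGamma$-invariant bounded kernels $H_1,H_2$ and bounded $\phi_1,\phi_2$, the product $(H_1*_{\Y}\phi_1)(H_2*_{\Y}\phi_2)$ must again be expressible as $H*_{\Y}\phi$ for a $\varGamma$-invariant bounded $H$, which one obtains by taking $H = (H_1\otimes_{\text{fiber}} H_2)$-type tensor constructions on $X\times X$ relative to $\Y$ and using that the relative product of relative products is again handled by the disintegration machinery of $\S\ref{sec1.2}$; the fiberwise $\mathfrak{L}^2$-boundedness needed to keep $H*_{\Y}\cdot$ a bounded operator is supplied by Theorem~\ref{thm1.2} and~(\ref{eq1.3}). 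Once this algebra closure is in hand, Theorem~\ref{thm2.4} converts it directly into the density statement $\mathbf{C}_1$, and the proof concludes.
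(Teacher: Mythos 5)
Your overall strategy (use the failure of joint relative weak-mixing to produce a bounded $\varGamma$-invariant kernel $H$ and a bounded $\phi$ with $H*_{\Y}\phi$ not a function on $\Y$, then build an intermediate $\sigma$-algebra and invoke Theorem~\ref{thm2.4}) is the right skeleton, but the way you construct $\X^{\prime\prime}$ has a genuine gap, and it is exactly the one you flag yourself. You define $\mathscr{X}^{\prime\prime}$ as the $\sigma$-algebra generated by the convolutions $H*_{\Y}\phi$ and then claim $\mathbf{C}_1$ holds for $\pi\colon\X^{\prime\prime}\rightarrow\Y$ ``essentially by construction.'' It does not: first, $\mathbf{C}_1$ for $\X^{\prime\prime}$ requires the kernels to be $\varGamma$-invariant elements of $\mathscr{H}^{\prime\prime}\otimes_{\Y}\mathscr{H}^{\prime\prime}$, whereas your generating convolutions use kernels that are only $\mathscr{X}^\prime\otimes\mathscr{X}^\prime$-measurable, so the generating family is not even of the form demanded by $\mathbf{C}_1$ for the new factor; second, the product-closure step you propose does not go through as sketched, because the pointwise product $(H_1*_{\Y}\phi_1)(H_2*_{\Y}\phi_2)$ is naturally a convolution with a kernel living on $X\times(X\times X)$ over the relative square $\X\times_{\Y}\X$, not a kernel on $X\times X$, and converting it into a single $\varGamma$-invariant $H*_{\Y}\phi$ (with the fiberwise bounds under control) is precisely the hard point — your ``$H_1\otimes_{\textrm{fiber}}H_2$-type tensor construction'' is asserted, not proved. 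Your treatment of the singleton condition is also off: adjoining ``a countable point-separating family'' to $\mathscr{X}^{\prime\prime}$ is not harmless, since every set you adjoin must again have a relatively compact indicator, and nothing in your construction guarantees this.

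The paper avoids both problems by turning the construction around. It does not generate a $\sigma$-algebra from the convolutions and then verify density; instead it sets $\mathscr{X}^{\prime\prime}=\{A\in\mathscr{X}^\prime : 1_A \textrm{ is \textit{FK a.a.p.\ for} }\varGamma\}$, i.e.\ it works with the $\mathbf{C}_4$-notion throughout. Lemma~\ref{lem2.3}-(1) says $L_{\textit{FKaap}}^2$ is a closed algebra, so by the monotone class theorem $\mathscr{X}^{\prime\prime}$ is a $\sigma$-subalgebra; it contains $\pi^{-1}[\mathscr{Y}]$ (via the explicit identity $1_{\pi^{-1}[A]}=H*_{\Y}\phi$ with $H=1_{\pi^{-1}[\cup_{g\in\varGamma}g^{-1}[A]]}$), it contains every singleton trivially ($1_{\{x_0\}}$ is a.e.\ $0$ or $x_0$ is an atom), and it is $G$-invariant by commutativity of $G$. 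The single convolution $H*_{\Y}\varphi$ produced from the failure of joint relative weak-mixing is used only once, via Lemma~\ref{lem2.7}, to show $\pi^{-1}[\mathscr{Y}]\subsetneq\mathscr{X}^{\prime\prime}$, i.e.\ nontriviality. Relative compactness of $\pi\colon\X^{\prime\prime}\rightarrow\Y$ is then immediate: every function in $\mathfrak{L}^2(X,\mathscr{X}^{\prime\prime},\mu)$ is an $\mathfrak{L}^2$-limit of simple functions built from FK a.a.p.\ indicators, hence FK a.a.p., and — this is the point the paper stresses in a footnote — since $\mathbf{C}_4$ only asks the approximating functions $\phi_1,\dotsc,\phi_k$ to lie in $\mathscr{H}$ rather than in $\mathscr{H}^{\prime\prime}$, the FK a.a.p.\ property passes down to the smaller factor with no measurability mismatch; Theorem~\ref{thm2.4} ($\mathbf{C}_4\Leftrightarrow\mathbf{C}_2$) then gives relative compactness. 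To repair your argument you would essentially have to reprove this algebra-of-a.a.p.-sets mechanism, so I would recommend restructuring along those lines rather than trying to force $\mathbf{C}_1$ directly on a convolution-generated $\sigma$-algebra.
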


\begin{proof}
First of all, note that conditions $\mathbf{C}_1\sim\mathbf{C}_6$ are equivalent to each other for $\varGamma$ under the hypothesis of Theorem~\ref{thm3.4}.

(1): We first note that if $\pi\colon\X^\prime\rightarrow\Y$ is jointly relatively weak-mixing for $\varGamma$, then from the non-triviality of ${\pi}\colon\X^\prime\rightarrow\Y$ it follows that ${\pi}\colon\X^\prime\rightarrow\Y$ is not relatively $\mathbf{C}_1$-compact for $\varGamma$. Indeed, if $\pi\colon\X^\prime\rightarrow\Y$ is relatively compact for $\varGamma$, then for any $\mu\otimes_{\Y}\mu$-\textit{a.e.} $\varGamma$-invariant $H(x,x^\prime)\in\mathscr{H}^\prime\otimes_{\Y}\mathscr{H}^\prime$ and any $\phi\in\mathfrak{L}^\infty(X,\mathscr{X}^\prime,\mu)$, by the jointly relatively weak-mixing,
\begin{gather*}
H*_{\Y}\phi(x)=\int_XH(x,x^\prime)\phi(x^\prime)\mu_{\pi(x)}(dx^\prime)=\int_XH(z,x^\prime)\phi(x^\prime)\mu_{\pi(z)}(dx^\prime)=H*_{\Y}\phi(z)
\end{gather*}
for any $x,z\in\pi^{-1}(y)$ for $\nu$-\textit{a.e.} $y\in Y$; hence by $\mathbf{C}_1$, $\mathscr{H}^\prime=\mathfrak{L}^2(X,\pi^{-1}[\mathscr{Y}],\mu)$ and so $\pi\colon\X^\prime\rightarrow\Y$ is trivial, a contradiction.

Conversely, if $\pi\colon\X^\prime\rightarrow\Y$ is relatively compact for $\varGamma$, then by $\mathbf{C}_1$ it similarly follows that $\pi\colon\X^\prime\rightarrow\Y$ is never jointly relatively weak-mixing for $\varGamma$. So if $(1)$ holds, we then can stop here.

(2): Next we assume $\pi\colon\X^\prime\rightarrow\Y$ is not jointly relatively weak-mixing for $\varGamma$. This means that we can find
a bounded $\varGamma$-invariant ($\mu\otimes_{\Y}\mu$-\textit{a.e.}) function $H\in\mathscr{H}^\prime\otimes_{\Y}\mathscr{H}^\prime$ on $X\times X$, which is not a (lift of some) function on $(Y,\mathscr{Y},\nu)$. Then replacing $H(x,x^\prime)$ by $H(x^\prime,x)$ if necessary, we can conclude that:
\begin{itemize}
\item[] There exists some $\varphi\in \mathfrak{L}^\infty(X,\mathscr{X}^\prime,\mu)$ such that $H*_{\Y}\varphi\colon X\rightarrow\mathbb{C}$ is bounded but not a function on $(Y,\mathscr{Y},\nu)$. See, e.g., \cite[Lemma~7.22]{EW}.
\end{itemize}
By Lemma~\ref{lem2.7}, $H*_{\Y}\varphi$ is a bounded function of \textit{FK a.a.p. for~$\varGamma$} as in $\mathbf{C}_4$. Hence there always exist bounded functions in $L_{\textit{FKaap}}^2(X,\mathscr{X}^\prime,\mu,G\mathrm{:}\varGamma)$ which are not functions on $(Y,\mathscr{Y},\nu)$.

By Lemma~\ref{lem2.3}-(1), it is clear that sums, products and limits of $\mathfrak{L}^2$-functions of \textit{FK a.a.p. for~$\varGamma$} are still $\mathfrak{L}^2$-functions of \textit{FK a.a.p. for~$\varGamma$}.

Moreover (the lifting of) all of the functions in $\mathfrak{L}^\infty(Y,\mathscr{Y},\nu)$ are \textit{FK a.a.p. for~$\varGamma$} for the extension $\pi\colon\X^\prime\rightarrow\Y$. Indeed, for any $A\in\mathscr{Y}$, let $$H(x,x^\prime)=1_{\pi^{-1}\left[\cup_{g\in\varGamma}g^{-1}[A]\right]}(x)\in\mathscr{H}^\prime\otimes_{\Y}\mathscr{H}^\prime$$
which is $\varGamma$-invariant on $X\times X$ and we set $\phi(x^\prime)=1_{\pi^{-1}[A]}(x^\prime)$; then it is easy to check that
$1_{\pi^{-1}[A]}(x)=H*_{\Y}\phi(x)$. This shows $1_{\pi^{-1}[A]}(\centerdot)$ is \textit{FK a.a.p. for~$\varGamma$} by Lemma~\ref{lem2.7}. Then $\mathfrak{L}^\infty(Y,\mathscr{Y},\nu)\subseteq L_{\textit{FKaap}}^2(X,\mathscr{X}^\prime,\mu,G\mathrm{:}\varGamma)$ by Lemma~\ref{lem2.3}.

In addition, for any point $x_0\in X$, $1_{\{x_0\}}$ is \textit{FK a.a.p. for~$\varGamma$}. Indeed, if $\mu(\{x_0\})=0$, the assertion holds for $1_{\{x_0\}}(x)\equiv0$ \textit{a.e.}; otherwise, $x_0$ is a periodic point for $G$and then the assertion also holds automatically.

Let
$$\mathscr{X}^{\prime\prime}=\big{\{}A\in\mathscr{X}^\prime\,|\,1_A(\centerdot)\textrm{ is \textit{FK a.a.p. for~$\varGamma$}}\textrm{ for }\X^\prime\xrightarrow{\pi}\Y\big{\}},$$
which contains $\pi^{-1}[\mathscr{Y}]$ because $1_{\pi^{-1}[A]}$ is in $L_{\textit{FKaap}}^2(X,\mathscr{X}^\prime,\mu,G\mathrm{:}\varGamma)$ for each $A\in\mathscr{Y}$; then it is a $\sigma$-subalgebra of $\mathscr{X}^\prime$ by the Halmos Monotone-class Theorem, since $L_{\textit{FKaap}}^2(X,\mathscr{X}^\prime,\mu,G\mathrm{:}\varGamma)$ is an algebra and it is closed.
In addition $\mathscr{X}^{\prime\prime}$ is obviously $\varGamma$-invariant and $\{x\}\in\mathscr{X}^{\prime\prime}$ for all $x\in X$; moreover,
\begin{itemize}
\item $\mathscr{X}^{\prime\prime}$ is $G$-invariant by the commutativity of $G$.
\end{itemize}
This shows that if we define $\X^{\prime\prime}=(X,\mathscr{X}^{\prime\prime},\mu,G)$, where the $G$-action map is defined by $(g,x)=g(x)$ same as in $\X^\prime$; then
$\X^{\prime\prime}$ is a factor of $\X^\prime$ via the factor $G$-map $\textit{Id}_X\colon X\rightarrow X$ and $\Y$ is a factor of $\X^{\prime\prime}$ via the factor $G$-map $\pi\colon X\rightarrow Y$.

We now claim that $\mathfrak{L}^2(X,\mathscr{X}^{\prime\prime},\mu)=L_\textit{FKaap}^2(X,\mathscr{X}^{\prime\prime},\mu,G\mathrm{:}\varGamma)$. Indeed, let $\phi$ be $\mathscr{X}^{\prime\prime}$-measurable, then $\phi$ is the limit in
$\mathfrak{L}^2(X,\mathscr{X}^{\prime\prime},\mu)$ of finite linear combinations of characteristic functions of $\mathscr{X}^{\prime\prime}$-sets, and hence $\phi$ belongs to $L_\textit{FKaap}^2(X,\mathscr{X}^{\prime\prime},\mu,G\mathrm{:}\varGamma)$, since $L_\textit{FKaap}^2(X,\mathscr{X}^{\prime\prime},\mu,G\mathrm{:}\varGamma)$ is closed.\footnote{It should be noted here that if $\phi\in\mathfrak{L}^2(X,\mathscr{X}^{\prime\prime},\mu)\cap L_{\textit{FKaap}}^2(X,\mathscr{X}^\prime,\mu,G\mathrm{:}\varGamma)$, then $\phi\in L_{\textit{FKaap}}^2(X,\mathscr{X}^{\prime\prime},\mu,G\mathrm{:}\varGamma)$. This is just the reason why we did not require $\phi_1,\dotsc,\phi_k\in\mathscr{H}^\prime$ instead of $\phi_1,\dotsc,\phi_k\in\mathscr{H}$
in $\mathbf{C}_4$. Otherwise, we cannot check that $1_A$ is \textit{FK a.a.p.} for $\varGamma$ (for needing to find $\phi_1,\dotsc,\phi_k\in\mathscr{H}^{\prime\prime}$) for any $A\in\mathscr{X}^{\prime\prime}$.
}
Therefore we can conclude that $\pi\colon\X^{\prime\prime}\rightarrow\Y$ is a relatively compact extension for $\varGamma$.

Finally we notice that ${\pi}^{-1}[\mathscr{Y}]\subsetneq\mathscr{X}^{\prime\prime}$ ($\mu$-mod $0$); this is because
\begin{gather*}
L_\textit{FKaap}^2(X,\mathscr{X}^\prime,\mu,G\mathrm{:}\varGamma)\subseteq L_\textit{FKaap}^2(X,\mathscr{X}^{\prime\prime},\mu,G\mathrm{:}\varGamma)
\end{gather*}
and there always exist bounded functions in $L_{\textit{FKaap}}^2(X,\mathscr{X}^\prime,\mu,G\mathrm{:}\varGamma)$ which are not functions on $(Y,\mathscr{Y},\nu)$.

Therefore $\pi\colon\X^{\prime\prime}\rightarrow\Y$ is a nontrivial relatively compact extension of $\Y$ for $\varGamma$.
The proof of Theorem~\ref{thm3.4} is thus completed.
\end{proof}

Using Theorem~\ref{thm2.6} and Lemma~\ref{lem2.3}, we can similarly obtain the following

\begin{Thm}[Alternating Theorem~II]\label{thm3.5}%%%
Let $G$ be an \textit{lcsc} amenable group and $\X\xrightarrow[]{\textit{Id}_X}\X^\prime\xrightarrow[]{\pi}\Y$ a short factors series, where $\pi\colon\X^\prime\rightarrow\Y$ is nontrivial. Then at least one of the following two statements holds:
\begin{enumerate}
\item[$(1)$] $\pi\colon\X^\prime\rightarrow\Y$ is jointly relatively weak-mixing for $G$;

\item[$(2)$] there exists an intermediate factor $\X^{\prime\prime}=(X,\mathscr{X}^{\prime\prime},\mu,G)$ between $\X^\prime$ and $\Y$ such that $\pi\colon\X^{\prime\prime}\rightarrow\Y$ is nontrivial relatively compact for $G$.
\end{enumerate}
\end{Thm}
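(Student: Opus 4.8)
The plan is to run the proof of Theorem~\ref{thm3.4} essentially verbatim with the $R$-submodule $\varGamma$ replaced throughout by $\varGamma=G$, and with the $R$-module structure of $G$ replaced by plain amenability. First I would record the standing fact that, since $G$ is \textit{lcsc} amenable, it carries a weak F{\o}lner sequence and the $\mathfrak{L}^2$-mean ergodic theorem applies, so that the whole list $\mathbf{C}_1\sim\mathbf{C}_6$ of Theorem~\ref{thm2.4} is equivalent for the choice $\varGamma=G$: Theorem~\ref{thm2.6} supplies $\mathbf{C}_1\Rightarrow\mathbf{C}_2\Rightarrow\mathbf{C}_3\Rightarrow\mathbf{C}_4$ without any amenability, and amenability closes the cycle through $\mathbf{C}_4\Rightarrow\mathbf{C}_5\Rightarrow\mathbf{C}_6\Rightarrow\mathbf{C}_1$ exactly as in the amenable version of Theorem~\ref{thm2.4} noted at the end of $\S\ref{sec2}$. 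If statement $(1)$ holds there is nothing to prove, so assume $\pi\colon\X^\prime\rightarrow\Y$ is \emph{not} jointly relatively weak-mixing for $G$.

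Then, as in the proof of Theorem~\ref{thm3.4}, there is a bounded $G$-invariant ($\mu\otimes_{\Y}\mu$-\textit{a.e.}) function $H\in\mathscr{H}^\prime\otimes_{\Y}\mathscr{H}^\prime$ that is not a lift of a function on $(Y,\mathscr{Y},\nu)$; replacing $H(x,x^\prime)$ by $H(x^\prime,x)$ if necessary we obtain $\varphi\in\mathfrak{L}^\infty(X,\mathscr{X}^\prime,\mu)$ with $H*_{\Y}\varphi$ bounded but not $\pi^{-1}[\mathscr{Y}]$-measurable. By Lemma~\ref{lem2.7}, $H*_{\Y}\varphi\in L_{\textit{FKaap}}^2(X,\mathscr{X}^\prime,\mu,G\mathrm{:}G)$, so this set contains bounded functions that are not functions on $\Y$. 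By Lemma~\ref{lem2.3}-(1) it is a closed subalgebra of $\mathfrak{L}^2(X,\mathscr{X}^\prime,\mu)$; moreover the lifts of all functions in $\mathfrak{L}^\infty(Y,\mathscr{Y},\nu)$, as well as all indicators $1_{\{x_0\}}$ for $x_0\in X$, lie in it, by the same elementary constructions as in Theorem~\ref{thm3.4} — here the relevant auxiliary function $1_{\pi^{-1}[\cup_{g\in G}g^{-1}[A]]}$ is $G$-invariant automatically, with no appeal to commutativity.

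Now I would set $\mathscr{X}^{\prime\prime}=\{A\in\mathscr{X}^\prime\mid 1_A\textrm{ is \textit{FK a.a.p.\ for }}G\textrm{ for }\X^\prime\xrightarrow{\pi}\Y\}$. By the Halmos monotone-class theorem this is a $\sigma$-subalgebra of $\mathscr{X}^\prime$; it contains $\pi^{-1}[\mathscr{Y}]$ and, by the previous paragraph, properly contains it; it is $G$-invariant (immediately, since $\varGamma=G$); and $\{x\}\in\mathscr{X}^{\prime\prime}$ for every $x\in X$. Put $\X^{\prime\prime}=(X,\mathscr{X}^{\prime\prime},\mu,G)$. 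Approximating an arbitrary $\mathscr{X}^{\prime\prime}$-measurable $\mathfrak{L}^2$-function by finite linear combinations of indicators of $\mathscr{X}^{\prime\prime}$-sets and using that $L_{\textit{FKaap}}^2$ is closed, one obtains $\mathfrak{L}^2(X,\mathscr{X}^{\prime\prime},\mu)=L_{\textit{FKaap}}^2(X,\mathscr{X}^{\prime\prime},\mu,G\mathrm{:}G)$, i.e.\ $\mathbf{C}_4$ holds for $\pi\colon\X^{\prime\prime}\rightarrow\Y$; hence $\mathbf{C}_2$ holds for it, i.e.\ $\pi\colon\X^{\prime\prime}\rightarrow\Y$ is relatively compact for $G$, and it is nontrivial because $\pi^{-1}[\mathscr{Y}]\subsetneq\mathscr{X}^{\prime\prime}$.

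The one place that genuinely needs care — and what I would treat as the main obstacle — is the inheritance step $\mathfrak{L}^2(X,\mathscr{X}^{\prime\prime},\mu)=L_{\textit{FKaap}}^2(X,\mathscr{X}^{\prime\prime},\mu,G\mathrm{:}G)$: one must check that ``\textit{FK a.a.p.\ for }$\X^\prime\rightarrow\Y$'' of an $\mathscr{X}^{\prime\prime}$-measurable function coincides with ``\textit{FK a.a.p.\ for }$\X^{\prime\prime}\rightarrow\Y$'', which is exactly why $\mathbf{C}_4$ was formulated so as to allow the spanning functions $\phi_1,\dots,\phi_k$ to lie in $\mathscr{H}$ rather than in $\mathscr{H}^\prime$ (cf.\ the footnote to the proof of Theorem~\ref{thm3.4}). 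Apart from this, and from the standard verification that $\mathbf{C}_1\sim\mathbf{C}_6$ remain equivalent for a possibly non-abelian amenable $G$ with $\varGamma=G$, the argument is a routine transcription of the proof of Theorem~\ref{thm3.4}.
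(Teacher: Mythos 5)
Your proposal is correct and is essentially the paper's own argument: the paper likewise disposes of case $(1)$ trivially and then repeats the second half of the proof of Theorem~\ref{thm3.4} with $\varGamma=G$, the only point needing comment being the $G$-invariance of $\mathscr{X}^{\prime\prime}$, which you rightly note is automatic when $\varGamma=G$ (this is the content of Lemma~\ref{lem2.3}, which the paper cites in place of the commutativity used in Theorem~\ref{thm3.4}). Your explicit appeal to the amenable version of Theorem~\ref{thm2.4} (noted at the end of $\S\ref{sec2}$) to pass from the $\mathbf{C}_4$-type property back to relative compactness matches the paper's intent, so no genuinely different route is taken.
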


\begin{proof}
If (1) holds, we then stop here. Now assume $\pi\colon\X^\prime\rightarrow\Y$ is not jointly relatively weak-mixing for $G$. The proof is almost same as that of the second part of Theorem~\ref{thm3.4}. We only need to note that the $G$-invariance of $\mathscr{X}^{\prime\prime}$ here follows from Lemma~\ref{lem2.3}-$(3)$.
\end{proof}

Note that we can further disintegrate $\mu$ over the factor $\textit{Id}_X\colon\X\rightarrow\X^{\prime\prime}$.

\subsection{Individual case}%%%
A special case of Theorem~\ref{thm3.4}  is the following

\begin{Thm}[Alternating Theorem~III]\label{thm3.6}%%%
Let $G$ be an \textit{lcsc} $R$-module and $\X\xrightarrow[]{\textit{Id}_X}\X^\prime\xrightarrow[]{\pi}\Y$ a short factors series where $\pi\colon\X^\prime\rightarrow\Y$ is nontrivial. Then at least one of the followings holds:
\begin{enumerate}
\item[$(1)$] $\pi\colon\X^\prime\rightarrow\Y$ is totally relatively weak-mixing for $G$;
\item[$(2)$] there exists an intermediate factor $\X^{\prime\prime}=(X,\mathscr{X}^{\prime\prime},\mu,G)$ between $\X^\prime$ and $\Y$ with $\mathscr{X}^{\prime\prime}\subset\mathscr{X}^\prime$
    such that
    \begin{itemize}
    \item $\pi\colon\X^{\prime\prime}\rightarrow\Y$ is a nontrivial, relatively compact extension for $\langle g\rangle_R$ for some element $g$ in $G$ with $g\not=I$;
        \item $\{x\}$ is an $\mathscr{X}^{\prime\prime}$-set for each $x\in X$;
        \item $\textit{Id}_X\colon\X^\prime\rightarrow\X^{\prime\prime}$.
        \end{itemize}
\end{enumerate}
\end{Thm}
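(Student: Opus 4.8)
The plan is to obtain Theorem~\ref{thm3.6} as a direct specialization of Alternating Theorem~I (Theorem~\ref{thm3.4}), which is exactly how the excerpt advertises it (``a special case of Theorem~\ref{thm3.4}''). The one conceptual point is that the condition ``totally relatively weak-mixing for $G$'' breaks up into a family of single-element conditions, and each single-element condition coincides, under the definitions of $\S\ref{sec3}$, with a ``jointly relatively weak-mixing'' condition for a cyclic $R$-submodule.

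First I would dispose of the trivial case: if $(1)$ holds there is nothing to do, so assume $\pi\colon\X^\prime\rightarrow\Y$ is \emph{not} totally relatively weak-mixing for $G$. By Definition~\ref{def3.1}$(b)$ this means there is an element $g\in G$ with $g\not=I$ for which $\pi\colon\X^\prime\rightarrow\Y$ fails to be relatively weak-mixing; that is, some $\langle g\rangle_R$-invariant $H(x,x^\prime)\in\mathscr{H}^\prime\otimes_{\Y}\mathscr{H}^\prime$ is not a function on $(Y,\mathscr{Y},\nu)$ via $\pi\times_{\Y}\pi$. I would then set $\varGamma=\langle g\rangle_R$ and record two elementary facts: $\varGamma$ is a nontrivial $R$-submodule of $G$ (it contains $g=1g\not=I$), and $\langle\varGamma\rangle_R=\varGamma$, since a cyclic $R$-submodule is already $R$-stable. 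Consequently ``$\pi$ is relatively weak-mixing for $g$'' and ``$\pi$ is jointly relatively weak-mixing for $\varGamma$'' (Definition~\ref{def3.1}$(c)$) are literally the same assertion, so our standing hypothesis says precisely that alternative $(1)$ of Theorem~\ref{thm3.4} fails for this $\varGamma$.

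Next I would invoke Theorem~\ref{thm3.4} with this $\varGamma$: its first alternative being excluded, the second one must hold, producing an intermediate factor $\X^{\prime\prime}=(X,\mathscr{X}^{\prime\prime},\mu,G)$ between $\X^\prime$ and $\Y$ with $\pi\colon\X^{\prime\prime}\rightarrow\Y$ nontrivial and relatively compact for $\varGamma=\langle g\rangle_R$, with $\{x\}\in\mathscr{X}^{\prime\prime}$ for every $x\in X$, and with $\textit{Id}_X\colon\X^\prime\rightarrow\X^{\prime\prime}$. The only extra item in Theorem~\ref{thm3.6}$(2)$ not listed verbatim in Theorem~\ref{thm3.4}$(2)$ is $\mathscr{X}^{\prime\prime}\subset\mathscr{X}^\prime$; this is immediate from the construction inside the proof of Theorem~\ref{thm3.4}, where $\mathscr{X}^{\prime\prime}$ is defined to be the collection of those $A\in\mathscr{X}^\prime$ whose indicator is \textit{FK a.a.p.} for $\varGamma$, hence a sub-$\sigma$-algebra of $\mathscr{X}^\prime$. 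This furnishes $(2)$ and finishes the argument.

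I do not anticipate any real obstacle, since all the weight is already carried by Theorem~\ref{thm3.4}; the only thing to handle carefully is the bookkeeping identification $\langle\langle g\rangle_R\rangle_R=\langle g\rangle_R$ together with the equivalence ``relatively weak-mixing for $g$'' $\Leftrightarrow$ ``jointly relatively weak-mixing for $\langle g\rangle_R$'', which is what lets the negation of Theorem~\ref{thm3.6}$(1)$ feed cleanly into the negation of Theorem~\ref{thm3.4}$(1)$.
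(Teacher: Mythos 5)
Your proposal is correct and follows essentially the same route as the paper: negate total relative weak-mixing to obtain some $g\not=I$ for which the extension is not (jointly) relatively weak-mixing for $\langle g\rangle_R$, then apply Theorem~\ref{thm3.4} with $\varGamma=\langle g\rangle_R$ to produce the intermediate factor. Your extra bookkeeping (the identification $\langle\langle g\rangle_R\rangle_R=\langle g\rangle_R$ and the inclusion $\mathscr{X}^{\prime\prime}\subset\mathscr{X}^\prime$ from the construction inside Theorem~\ref{thm3.4}) only makes explicit what the paper leaves implicit.
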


\begin{proof}
If (1) does not hold, then there exists some element $g\in G, g\not=I$, such that ${\pi}\colon\X^\prime\rightarrow\Y$ is not jointly relatively weak-mixing for the $R$-submodule $\langle g\rangle_R$. Then by Theorem~\ref{thm3.4}, the statement (2) of Theorem~\ref{thm3.6} holds. The proof of Theorem~\ref{thm3.6} is thus complete.
\end{proof}

\subsection{A remark on intermediate factors}\label{sec3.3}%%%
It should be noted here that although the intermediate factors $\X^{\prime\prime}=(X,\mathscr{X}^{\prime\prime},\mu,G)$ in Theorems~\ref{thm3.4}, \ref{thm3.5} and \ref{thm3.6} are not necessarily to be standard Borel $G$-spaces (i.e., there does not need to exist a topology on $X$ so that $(X,\mathscr{X}^{\prime\prime},\mu)$ is a Polish Borel probability space and the $G$-action is Borel), yet Theorem~\ref{thm2.4} is still valid for these factors of the standard Borel system $\X$.
This point is important for us to build up a structure theorem that consists of ``primitive'' links in the factors chain in $\S\ref{sec4}$ below.

%%%%%%%%%%%%%%%%%%%%%%%%%%%%%%%%%%%%%%%%%%%%%%%%%%%%%%%%%%%%%%%%%%%%%%%%%%%%%%%%%%%%%%%%%%%%%%%%%%%
%%%%%%%%%%%%%%%%%%%%%%%%%%%%%%%%%%%%%%%%%%%%%%%%%%%%%%%%%%%%%%%%%%%%%%%%%%%%%%%%%%%%%%%%%%%%%%%%%%%
\section{Dichotomy theorem and Furstenberg structure theorem}\label{sec4}%%%
We will generalize Furstenberg's dichotomy and structure theorems in this section. First we suppose that $G$ is an \textit{lcsc} $R$-module over a ring $(R,+,\cdot)$ as in Introduction and assume $${\pi}\colon\X=(X,\mathscr{X},\mu,G)\rightarrow\Y=(Y,\mathscr{Y},\nu,G)$$
is an extension under Borel acting by $G$, where
\begin{itemize}
\item $(X,\mathscr{X},\mu)$ is a standard Borel $G$-space and $(Y,\mathscr{Y},\nu)$ is a Borel $G$-space.
\end{itemize}
Then conditions $\mathbf{C}_1\sim\mathbf{C}_6$ are equivalent to each other by Theorem~\ref{thm2.4}.

The main arguments of this section will need the condition that $(R,+,\cdot)$ is a syndetic ring and $G$ is Noetherian.

\subsection{Primitive factors}\label{sec4.0}%%%
Let $\X\xrightarrow[]{\textit{Id}_X}\X^\prime\xrightarrow[]{\pi}\Y$ be a short factors series, where $\pi\colon\X^\prime\rightarrow\Y$ is nontrivial and $\textit{Id}_X$ is the identity of $X$ to itself.

\begin{defn}[{cf.~\cite[Def.~6.5]{Fur} and \cite{FK} for $R=\mathbb{Z}$}]\label{def4.1}
We shall say that $\pi\colon\X^\prime\rightarrow\Y$ is \textit{primitive} if one of the following holds:
\begin{enumerate}
\item[$(a)$] $\pi\colon\X^\prime\rightarrow\Y$ is totally relatively weak-mixing for $G$;
\item[$(b)$] $\pi\colon\X^\prime\rightarrow\Y$ is relatively compact for $G$;
\item[$(c)$] $G$ is the nontrivial direct product of two $R$-submodules $G=G_{rc}\times G_{rw}$ such that $\pi\colon\X^\prime\rightarrow\Y$ is relatively compact for $G_{rc}$ and totally relatively weak-mixing for $G_{rw}$.
\end{enumerate}
The case $(c)$ will be called ``chaotically primitive''.
\end{defn}

We notice here that $(a)$ and $(c)$ both require that $(X,\mathscr{X},\mu)$ be a standard Borel $G$-space; otherwise, there is no the definition of relatively weak-mixing for $G$ or $G_{{rw}}$ over $\X^\prime$.

\subsection{Dichotomy theorem}\label{sec4.1}%%%
Combining Theorem~\ref{thm3.6} and Lemma~\ref{lem2.5} we can obtain our dichotomy theorem--Theorem~\ref{thm4.2}, which is a generalization of the Furstenberg-Katznelson dichotomy theorem (cf.~\cite[Theorem~2.4]{FK} and also \cite[Theorem~6.16]{Fur}) and of Furstenberg's \cite[Theorem~10.3]{F63}.

\begin{Thm}\label{thm4.2}%%%
Let $G$ be an \textit{lcsc} \textbf{\textit{Noetherian}} $R$-module over a \textbf{\textit{syndetic}} ring $(R,+,\cdot)$. If $\pi\colon\X\rightarrow\Y$ is a nontrivial extension, then one can find a short factors series, $\X\xrightarrow{\textit{Id}_X}\X^\prime=(X,\mathscr{X}^\prime,\mu,G)\xrightarrow{\pi}\Y$, such that
\begin{itemize}
\item $\pi\colon\X^\prime\rightarrow\Y$ is a nontrivial primitive extension of $\Y$.
\end{itemize}
\end{Thm}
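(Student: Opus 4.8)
The plan is to iterate the trichotomy supplied by Alternating Theorem III (Theorem~\ref{thm3.6}) along a strictly descending chain of $\sigma$-subalgebras, and then use the Noetherian hypothesis on $G$ together with syndeticity of $R$ to force this process to terminate in finitely many steps with a primitive extension. I start from the short factors series $\X\xrightarrow{\textit{Id}_X}\X\xrightarrow{\pi}\Y$ (taking $\mathscr{X}^\prime=\mathscr{X}$ initially). If $\pi\colon\X\to\Y$ is already totally relatively weak-mixing for $G$ or relatively compact for $G$, we are in case $(a)$ or $(b)$ of Definition~\ref{def4.1} and we are done. Otherwise Theorem~\ref{thm3.6} produces an element $g_1\in G$, $g_1\ne I$, and an intermediate factor $\X_1=(X,\mathscr{X}_1,\mu,G)$ with $\pi^{-1}[\mathscr{Y}]\subsetneq\mathscr{X}_1\subseteq\mathscr{X}$, such that $\pi\colon\X_1\to\Y$ is nontrivial and relatively compact for $\langle g_1\rangle_R$.

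Next I collect \emph{all} elements for which relative compactness holds. Let $\varGamma_{rc}$ be the set of $h\in G$ with $h\ne I$ such that $\pi\colon\X_1\to\Y$ is relatively compact for $\langle h\rangle_R$, together with $I$. Using Lemma~\ref{lem2.5} (which gives relative compactness for $\varGamma^\prime\times\varGamma^{\prime\prime}$ from relative compactness for each factor) and the fact that $G$ is abelian, one checks that the $R$-submodule $G_{rc}$ generated by $\varGamma_{rc}$ is again a submodule for which $\pi\colon\X_1\to\Y$ is relatively compact; here the syndeticity of $R$ is what guarantees that relative compactness for $\langle g_1\rangle_R$ propagates appropriately through scalar multiples — the point being that $rR$ syndetic in $R$ lets one approximate, on a F{\o}lner sequence, the orbit under $tg_1$ by finitely many translates coming from the relatively compact direction. (This is exactly the step where ``syndetic'' is used, mirroring its role in the Furstenberg--Katznelson setting where $\langle T\rangle_{\mathbb Z}$ already generates a large enough subgroup.) If $G_{rc}=G$, then $\pi\colon\X_1\to\Y$ is relatively compact for $G$, hence primitive of type~$(b)$, and setting $\mathscr{X}^\prime=\mathscr{X}_1$ finishes the proof. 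If $G_{rc}\ne G$, apply Theorem~\ref{thm3.6} to the complementary submodule: either $\pi\colon\X_1\to\Y$ is totally relatively weak-mixing for $G_{rw}$ for a complement $G=G_{rc}\times G_{rw}$ — in which case $\pi\colon\X_1\to\Y$ is chaotically primitive of type~$(c)$ and we are done — or else there is a further element $g_2\in G_{rw}$, $g_2\ne I$, and a strictly larger $\sigma$-subalgebra $\mathscr{X}_2\supsetneq\mathscr{X}_1$ with $\pi\colon\X_2\to\Y$ relatively compact for $\langle g_2\rangle_R$.

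The main obstacle is showing this iteration halts. Here is where the Noetherian hypothesis on $G$ enters, though somewhat indirectly. At stage $n$ we have a submodule $G_{rc}^{(n)}$ of $G$ for which $\pi\colon\X_n\to\Y$ is relatively compact, and each step either terminates or enlarges $G_{rc}^{(n)}$ to $G_{rc}^{(n+1)}\supsetneq G_{rc}^{(n)}$ (since $g_{n+1}$ was chosen outside the relatively-compact directions of $\X_n$, and relative compactness only increases when the $\sigma$-algebra grows along $\textit{Id}_X$). Thus $G_{rc}^{(1)}\subsetneq G_{rc}^{(2)}\subsetneq\cdots$ is a strictly ascending chain of $R$-submodules of $G$, which by the ACC must stabilize after finitely many steps $N$. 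Once $G_{rc}^{(N)}$ is maximal, the complementary direction $G_{rw}^{(N)}$ (if nontrivial) admits no further relatively-compact elements, so Theorem~\ref{thm3.6} applied to it must fall into alternative~$(1)$: $\pi\colon\X_N\to\Y$ is totally relatively weak-mixing for $G_{rw}^{(N)}$. Then $\X^\prime:=\X_N$ gives the desired series: if $G_{rw}^{(N)}=\{I\}$ we have case~$(b)$, if $G_{rc}^{(N)}=\{I\}$ case~$(a)$, and otherwise the nontrivial product decomposition yields case~$(c)$. Non-triviality of $\pi\colon\X^\prime\to\Y$ is preserved throughout because at each stage $\pi^{-1}[\mathscr{Y}]\subsetneq\mathscr{X}_n$. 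The delicate point that I expect to require the most care is verifying that $G$ genuinely splits as a direct product $G_{rc}\times G_{rw}$ of $R$-submodules — pure maximality of $G_{rc}$ does not automatically give a complement in a general module — so one likely needs the Noetherian structure (or the specific form of $G$) to extract such a splitting, and this is presumably where an appeal to the module-theoretic hypotheses beyond bare ACC is hidden.
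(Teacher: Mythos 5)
The decisive gap is the one you flag yourself at the end: producing the splitting $G=G_{rc}\times G_{rw}$ and the \emph{total} relative weak-mixing on the complement. This is not hidden in module-theoretic structure beyond the ACC; in the paper it comes from a second maximality argument combined with the syndetic hypothesis, and your proposal has syndeticity doing the wrong job. Concretely, the paper applies the Noetherian property once to the family of \emph{all} $R$-submodules $\varGamma$ for which some intermediate factor $(X,\mathscr{X}^\prime,\mu,G)\xrightarrow{\pi}\Y$ is a nontrivial relatively compact extension, obtaining a maximal $G_{rc}$ together with its factor $\X^\prime$. Then: (i) for every $g\notin G_{rc}$ the extension $\pi\colon\X^\prime\rightarrow\Y$ is jointly relatively weak-mixing for $\langle g\rangle_R$, since otherwise Theorem~\ref{thm3.4} yields a further factor $\X^{\prime\prime}$ with $\mathscr{X}^{\prime\prime}\subseteq\mathscr{X}^\prime$ relatively compact for $\langle g\rangle_R$, still relatively compact for $G_{rc}$ (compactness passes \emph{down} to sub-$\sigma$-algebras precisely because the spanning functions in $\mathbf{C}_4$ may lie in $\mathscr{H}$), hence relatively compact for $\langle g\rangle_R\times G_{rc}$ by Lemma~\ref{lem2.5}, contradicting maximality; (ii) syndeticity is used exactly to prove $\langle g\rangle_R\cap G_{rc}=\{I\}$ for $g\notin G_{rc}$: if $tg\in G_{rc}$ with $t\neq0$, then compactness for $\langle tg\rangle_R=Rtg$, a compact $K\subseteq R$ with $K+Rt=R$, and Lemma~\ref{lem2.8} would force compactness for $Rg=\langle g\rangle_R$, contradicting what maximality and Lemma~\ref{lem2.5} already forbid; (iii) a second maximality argument, applied to the submodules meeting $G_{rc}$ trivially for which the extension is jointly relatively weak-mixing, produces $G_{rw}$, and maximality together with the trivial intersections from (ii) forces $G=G_{rc}\times G_{rw}$, while (i) and $G_{rw}\cap G_{rc}=\{I\}$ make the weak-mixing total on $G_{rw}$. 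So syndeticity does not serve to ``propagate compactness through scalar multiples'' when assembling $G_{rc}$, as you suggest; it rules out that a scalar multiple of a non-compact direction falls into $G_{rc}$, which is what makes the product direct and the complement totally weak-mixing.

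Two further steps of your iteration would fail as written. First, your chain $\mathscr{X}_1\subsetneq\mathscr{X}_2\subsetneq\dotsm$ goes the wrong way: Theorems~\ref{thm3.4} and \ref{thm3.6} produce intermediate factors \emph{between} the current factor and $\Y$, so the $\sigma$-algebras shrink along the process, and relative compactness is inherited by smaller sub-$\sigma$-algebras rather than larger ones; enlarging the algebra adds functions that must be approximated and can destroy compactness, so the monotonicity you invoke (``relative compactness only increases when the $\sigma$-algebra grows'') is backwards. Second, the passage from the set of individually compact elements to compactness for the submodule they generate is unjustified: Lemma~\ref{lem2.5} handles only the product of two submodules, and the $\mathbf{C}_4$ condition does not automatically pass to an increasing union of submodules because the finite spanning set must be uniform over the whole submodule. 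The paper avoids both problems by applying the ACC directly to the family of submodules admitting a nontrivially compact factor (no chain construction, no generation step) and running all subsequent arguments on the single factor $\X^\prime$ attached to the maximal $G_{rc}$.
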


\begin{proof}
Let $\pi\colon\X\rightarrow\Y$ itself be neither totally relatively weak-mixing for $G$ nor relatively compact for $G$; otherwise the statement holds trivially by taking $\X^\prime=\X$.

Then by Theorem~\ref{thm3.6}, it follows that there is a factor $\X^\prime=(X,\mathscr{X}^\prime,\mu,G)$ of $\X$ such that $\pi\colon\X^\prime\rightarrow\Y$ is a nontrivial relatively compact extension for $\langle g\rangle_R$, for some $g\in G$ with $g\not=I$.
Let $\varGamma_{rc}$ be the nonempty collection of all $R$-submodules of $G$ for which $\pi\colon\X^\prime\rightarrow\Y$ is a nontrivial relatively compact extension for some factor $(\X^\prime,\pi)$ of $\X$. Since $G$ is Noetherian, we can choose a maximal $R$-submodule from $\varGamma_{rc}$, say $G_{rc}$, such that $\pi\colon\X^\prime\rightarrow\Y$ is a nontrivial relatively compact extension for $G_{rc}$ and that $G_{rc}\not=\{I\}$, where $\X^\prime=(X,\mathscr{X}^\prime,\mu,G)$ is such that $\{x\}\in\mathscr{X}^\prime$ for each $x\in X$.

Let $G_{rc}\not=G$; otherwise Theorem~\ref{thm4.2} holds. We then can claim that for any $g\in G\setminus G_{rc}$, $\X^\prime\xrightarrow{\pi}\Y$ is not relatively compact for $\langle g\rangle_R$. For if not, one can find some $g\in G\setminus G_{rc}$ so that ${\pi}\colon\X^{\prime}\rightarrow\Y$ is a nontrivial compact extension for the $R$-submodule $\langle g\rangle_R\times G_{rc}$ that is larger than $G_{rc}$, by Lemma~\ref{lem2.5}. This contradicts the maximality of $G_{rc}$.

Now we will consider the short factors series: $\X\xrightarrow{\textit{Id}_X}\X^\prime\xrightarrow{\pi}\Y$.
We will assert that for any $g\in G\setminus G_{rc}$, $\pi\colon\X^\prime\rightarrow\Y$ is \textit{jointly} relatively weak-mixing for the $R$-submodule $\langle g\rangle_R$.
Indeed, if not, then by Theorem~\ref{thm3.4} it follows that there is a factor of $\X^\prime$, $\X^{\prime\prime}=(X,\mathscr{X}^{\prime\prime},\mu, G)$, such that $\pi\colon\X^{\prime\prime}\rightarrow\Y$ is nontrivial relatively compact for $\varGamma=\langle g\rangle_R$ for some $g\not\in G_{rc}$; and further $\pi\colon\X^{\prime\prime}\rightarrow\Y$ is nontrivial relatively compact for $\varGamma\times G_{rc}$ by Lemma~\ref{lem2.5}. This is a contradiction to the maximality of $G_{rc}$.

Because $R$ is a syndetic ring, $\langle g\rangle_R\cap G_{rc}=\{I\}$ for any $g\not\in G_{rc}$. Otherwise, for some $t\not=0$, $tg\in G_{rc}$, so $Rtg\subset G_{rc}$ and then $\pi\colon\X^\prime\rightarrow\Y$ is relatively compact for $Rtg=\langle tg\rangle_R$. Since $R$ is syndetic, we can find a compact subset $K$ of $R$ with $K+Rt=R$. Further, by Lemma~\ref{lem2.8}, it follows that $\pi\colon\X^\prime\rightarrow\Y$ is relatively compact for $Rg=\langle g\rangle_R$.
This is a contradiction.

Then we can similarly find a maximal $R$-submodule of $G$, say $G_{{rw}}$, outside $G_{rc}$ such that $\pi\colon\X^\prime\rightarrow\Y$ is \textit{jointly} relatively weak-mixing for $G_{{rw}}$. We may claim that
\begin{gather*}
G=G_{rc}\times G_{{rw}}.
\end{gather*}
Indeed, if not, then there exists some $g\in G\setminus G_{rc}\times G_{{rw}}$; clearly $\left(\langle g\rangle_R\times G_{{rw}}\right)\cap G_{rc}=\{I\}$
because
\begin{gather*}
(tg)g_{{rw}}=g_{rc}\in G_{rc},\ t\not=0, g_{{rw}}\in G_{{rw}}\ \Rightarrow\ tg=g_{rc}g_{{rw}}^{-1}\in G_{rc}\times G_{{rw}};
\end{gather*}
so the extension $\pi\colon\X^\prime\rightarrow\Y$ is jointly relatively weak-mixing for $\langle g\rangle_R\times G_{{rw}}$; this contradicts the maximality of $G_{{rw}}$.

Then by $G_{{rw}}\cap G_{rc}=\{I\}$, it follows that $\pi\colon\X^\prime\rightarrow\Y$ is \textit{totally} relatively weak-mixing for $G_{{rw}}$.
Thus, $\pi\colon\X^\prime\rightarrow\Y$ is primitive in the sense of Def.~\ref{def4.1}$(b)$ or Def.~\ref{def4.1}$(c)$.
This proves Theorem~\ref{thm4.2}.
\end{proof}

This theorem will provide us with the main ingredient in the proof of our Structure Theorem~I (Theorem~0.2) as in the classical case of Hillel Furstenberg.

\subsection{Structure theorems}\label{sec4.2}%%%%%%
To formulate the structure theorems, we need the notion of limit of factors. If $\Z=(Z,\mathscr{Z},\mu,G)$ is a measure-preserving system and $\beta_\theta\colon\Z\rightarrow\Z_\theta,\theta\in\Theta$, is a system of factors of $\Z$. We shall say that $\Z$ is an \textit{inverse limit} of the factors $\{\Z_\theta;\theta\in\Theta\}$ if $\mathscr{Z}$ is generated by the family of $\sigma$-subalgebras $\beta_\theta^{-1}[\mathscr{Z}_\theta]$ ($\mu$-mod $0$), i.e., $\mathscr{Z}=\sigma\big{(}\bigcup_{\theta\in\Theta}\beta_\theta^{-1}[\mathscr{Z}]\big{)}$ ($\mu$-mod $0$); written as $\Z=\underleftarrow{\lim}_{\theta\in\Theta}\Z_\theta$.

In addition, any trivial extension is itself totally relatively weak-mixing for $G$. Whence we only need to study nontrivial extensions.

We now reformulate Theorem~0.2 in a slightly general version as follows:

\begin{Thm}[Structure Theorem~I]\label{thm4.3}%%%
Let $G$ be an \textit{lcsc} Noetherian $R$-module over a syndetic ring $(R,+,\cdot)$. Then, for any nontrivial standard Borel extension $\pi\colon\X\rightarrow\Y$, there exists an ordinal $\eta$ and a system of factors $\{\pi_\xi\colon\X\rightarrow\X_\xi\}_{\xi\le\eta}$ with $\pi_\xi=\textit{Id}_X$ and $\{x\}\in\mathscr{X}_\xi\;\forall x\in X$ for $0<\xi<\eta$ such that
\begin{enumerate}
\item[$(a)$] $\X_\eta=\X$ ($\mu$-$\mathrm{mod}$ $0$) and $\X_0=\Y, \pi_0=\pi$.
\item[$(b)$] For each pair of ordinals $\theta,\xi$ with $0\le\theta<\xi\le\eta$, there is a factor $G$-map $\pi_{\xi,\theta}\colon\X_\xi\rightarrow\X_\theta$ such that $\pi_\theta=\pi_{\xi,\theta}\circ\pi_\xi$.
\item[$(c)$] For each ordinal $\xi$ with $0\le\xi<\eta$, $\pi_{\xi+1,\xi}\colon\X_{\xi+1}\rightarrow\X_\xi$ is a nontrivial primitive extension.
\item[$(d)$] If $\xi$ is a limit ordinal with $0<\xi\le\eta$, then $\X_\xi=\underleftarrow{\lim}_{\theta<\xi}\X_\xi$.
\end{enumerate}
Moreover, in the factors chain
\begin{gather*}
\X\rightarrow\dotsm\rightarrow\X_{\xi+1}\xrightarrow[]{\pi_{\xi+1,\xi}}\X_\xi\rightarrow\dotsm\rightarrow\X_1\xrightarrow[]{\pi_{1,0}}\X_0
\end{gather*}
every intermediate link $\pi_{\xi+1,\xi}\colon\X_{\xi+1}\rightarrow\X_\xi$ is such that $\pi_{\xi+1,\xi}=\textit{Id}_X$ for $0<\xi<\eta$.
\end{Thm}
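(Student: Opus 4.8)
The plan is to run the standard transfinite induction of Furstenberg, but feeding each successor step with the dichotomy result Theorem~\ref{thm4.2} rather than with the classical isometric/primitive dichotomy. I would build the factors $\X_\xi$ by recursion on the ordinal $\xi$, maintaining throughout the invariant that for $0<\xi<\eta$ each $\X_\xi$ is of the intermediate form $(X,\mathscr{X}_\xi,\mu,G)$ with $\{x\}\in\mathscr{X}_\xi$ for all $x\in X$, $\pi^{-1}[\mathscr{Y}]\subseteq\mathscr{X}_\xi\subseteq\mathscr{X}$, $\mathscr{X}_\xi$ $G$-invariant, $\pi_\xi=\textit{Id}_X$, and that the algebras are increasing in $\xi$; the limit $\X_\eta$ will be $\X$ itself (so there the state map is honestly $\pi$). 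The key point making this work even though the intermediate $\X_\xi$ need not be standard Borel is the remark in $\S\ref{sec3.3}$: Theorem~\ref{thm2.4}, and hence Theorem~\ref{thm4.2}, remains valid for short factors series $\X\xrightarrow{\textit{Id}_X}\X_\xi\xrightarrow{\pi_{\xi,0}}\Y$ built from the fixed standard Borel $\X$.

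First I would handle the successor step. Given $\X_\xi$ with $\pi_{\xi,0}\colon\X_\xi\to\Y$ nontrivial (if it is already trivial $\mu$-mod $0$, i.e.\ $\mathscr{X}_\xi=\mathscr{X}$ mod $0$, the construction terminates with $\eta=\xi$ and we rename $\X_\xi$ as $\X$), apply Theorem~\ref{thm4.2} to the short factors series $\X\xrightarrow{\textit{Id}_X}\X_\xi\xrightarrow{\pi_{\xi,0}}\Y$: this produces a $\sigma$-subalgebra $\mathscr{X}_{\xi+1}$ with $\mathscr{X}_\xi\subseteq\mathscr{X}_{\xi+1}\subseteq\mathscr{X}$, $G$-invariant, $\pi^{-1}[\mathscr{Y}]\subseteq\mathscr{X}_{\xi+1}$, such that $\pi_{\xi+1,\xi}=\textit{Id}_X\colon\X_{\xi+1}\to\X_\xi$ is a nontrivial primitive extension; moreover the construction in Theorems~\ref{thm3.4}/\ref{thm3.6} keeps all singletons measurable, so $\{x\}\in\mathscr{X}_{\xi+1}$. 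The composition maps $\pi_{\xi+1,\theta}:=\pi_{\xi,\theta}\circ\pi_{\xi+1,\xi}$ and the compatibility $\pi_\theta=\pi_{\xi+1,\theta}\circ\pi_{\xi+1}$ are then immediate from the recursion hypothesis. At a limit ordinal $\xi$ I would set $\mathscr{X}_\xi=\sigma\big(\bigcup_{\theta<\xi}\mathscr{X}_\theta\big)$ ($\mu$-mod $0$), which is again $G$-invariant and contains $\pi^{-1}[\mathscr{Y}]$, with the obvious projections; this realises $\X_\xi=\underleftarrow{\lim}_{\theta<\xi}\X_\theta$ in the sense defined in $\S\ref{sec4.2}$, and all singletons stay measurable.

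The remaining point is termination: the recursion cannot proceed through all ordinals, since the $\mathscr{X}_\xi$ strictly increase at each successor step until $\mathscr{X}_\xi=\mathscr{X}$ (mod $0$). The standard cardinality argument — an injective assignment of a strictly larger $\sigma$-subalgebra of the countably generated $\mathscr{X}$ to each $\xi$ below the stopping ordinal, which therefore cannot exceed a fixed ordinal bound — gives an ordinal $\eta$ at which $\mathscr{X}_\eta=\mathscr{X}$ ($\mu$-mod $0$); we then declare $\X_\eta=\X$, which is genuinely a standard Borel $G$-space, and $(a)$ holds. Properties $(b)$, $(c)$, $(d)$ were arranged at each step, and the "moreover" clause ($\pi_\xi=\textit{Id}_X$ and $\pi_{\xi+1,\xi}=\textit{Id}_X$ for $0<\xi<\eta$, with singletons measurable) is exactly the recursion invariant. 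Specializing to $\Y$ the one-point $G$-system recovers Theorem~0.2. I expect the only genuinely delicate step to be checking that at a limit stage one does not accidentally jump to all of $\mathscr{X}$ prematurely while still keeping the nontriviality hypothesis available for the next successor application of Theorem~\ref{thm4.2}; this is handled by simply stopping the recursion the first time $\mathscr{X}_\xi=\mathscr{X}$ mod $0$ occurs (at a successor or a limit), so that every strictly-below-$\eta$ stage is genuinely nontrivial and the hypotheses of Theorem~\ref{thm4.2} are met throughout.
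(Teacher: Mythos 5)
Your proposal is correct and follows essentially the same route as the paper: transfinite recursion in which each successor step applies the dichotomy Theorem~\ref{thm4.2} over the previously constructed factor (legitimate for the non-standard intermediate factors by the remark of $\S\ref{sec3.3}$), limits are handled by the generated $\sigma$-algebra, and termination comes from the fact that a strictly increasing chain of sub-$\sigma$-algebras injects into the measure algebra of $(X,\mathscr{X},\mu)$. The only difference is presentational — you run the recursion directly with a stopping rule, while the paper phrases the same cardinality bound as a proof by contradiction after constructing the chain along all ordinals.
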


\begin{proof}
If $\pi\colon\X\rightarrow\Y$ itself is primitive, then let $\eta=1, \X_1=\X, \pi_1=\textit{Id}_X$, and $\pi_{1,0}=\pi$ and hence Theorem~\ref{thm4.3} holds. Next assume that $\pi\colon\X\rightarrow\Y$ is not primitive and we will denote by $\Theta=\{1,2,\dotsc,\omega,\omega+1,\dotsc\}$ the set of all ordinals bigger than $0$.

By contrary, suppose that there exists no an ordinal $\eta$ satisfying the requirements of the structure theorem. Then by transfinite induction and letting $\X_0=\Y$, we can construct a system of factors
$$
\pi_0\colon \X\rightarrow\X_0,\quad \pi_\theta\colon\X\rightarrow\X_\theta=(X,\mathscr{X}_\theta,\mu,G)\quad \forall \theta\in\Theta,
$$
such that:
%\begin{equation*}
%\begin{diagram}
%  &         & \X_{\xi}  & \\
%  &\ruTo^{\pi_{\xi}}& &\rdTo^{\pi_{\xi\theta}} \\
%\X &&   \rTo(2,1)^{\pi_\theta}      &&\X_\theta
%\end{diagram}
\begin{enumerate}
\item[$(b)$] $\pi_\theta\colon\X\xrightarrow[]{\pi_\xi}\X_\xi\xrightarrow[]{\pi_{\xi,\theta}}\X_\theta$ for all $0\le\theta<\xi$;
\item[$(c)$] for each ordinal $\theta\ge0$, the extension $\pi_{\theta+1,\theta}\colon\X_{\theta+1}\rightarrow\X_\theta$ is primitive and nontrivial; and
\item[$(d)$] $\X_\xi=\underleftarrow{\lim}_{\theta<\xi}\X_\theta$ if $\xi$ is a limit ordinal.
\end{enumerate}

Indeed, let $\pi_0=\pi$; and for the ordinal $\theta=1$, by Theorem~\ref{thm4.2}, it follows that there is a factor $\X_1=(X,\mathscr{X}_1,\mu,G)$ of $\X$ such that
$\pi\colon\X\xrightarrow{\pi_1}\X_1\xrightarrow{\pi_{1,0}}\X_0$ and $\X_1\xrightarrow{\pi_{1,0}}\X_0$ is nontrivial primitive.
Now, given any ordinal $\theta\in\Theta, \theta\ge2$, let, for any ordinal $\xi<\theta$, there be factor $\X_\xi$ of $\X$ such that ${\pi_{\xi+1,\xi}}\colon\X_{\xi+1}\rightarrow\X_\xi$ is nontrivial primitive whenever $\xi+1<\theta$ and that for any $\xi<\alpha<\theta$ there holds
$\pi_\xi\colon\X\xrightarrow{\pi_\alpha}\X_\alpha\xrightarrow{\pi_{\alpha,\xi}}\X_\xi$. If $\theta$ is an isolated ordinal, then by Theorem~\ref{thm4.2} we can interpolate a factor, say $\X_{\theta}=(X,\mathscr{X}_{\theta},\mu,G)$, of $\X$ between $\X\xrightarrow{\pi_{\theta-1}}\X_{\theta-1}$ such that
%\begin{diagram}
%  &         & \X_{\theta}  & \\
%  &\ruTo^{\pi_{\theta}}& &\rdTo^{\pi_{\theta,\theta-1}} \\
%\X &&   \rTo(2,1)^{\pi_{\theta-1}}      &&\X_{\theta-1}
%\end{diagram}
\begin{gather*}
\pi_{\theta-1}\colon\X\xrightarrow[]{\pi_\theta}\X\xrightarrow[]{\pi_{\theta,\theta-1}}\X_{\theta-1}
\end{gather*}
and such that ${\pi_{\theta,\theta-1}}\colon\X_{\theta}\rightarrow\X_{\theta-1}$ is nontrivial primitive. If $\theta$ is a limit ordinal, then we set $\mathscr{X}_{\theta}=\sigma\big{(}\bigcup_{\xi<\theta}\mathscr{X}_{\xi}\big{)}$ which is a $\sigma$-subalgebra of $\mathscr{X}$, where $\mathscr{X}_0=\pi^{-1}[\mathscr{Y}]$, and we now define $\pi_{\theta,0}=\pi, \pi_{\theta,\xi}=\textit{Id}_X$ for $0<\xi<\theta$. This completes the induction-hypothesis.

Finally we let $\widetilde{\mathscr{X}}_\mu$ be the Boolean $\sigma$-algebra of $(X,\mathscr{X},\mu)$; that is, $\widetilde{\mathscr{X}}_\mu$ consists of equivalence classes of sets in $\mathscr{X}$, where $A\sim B$ if $\mu(A\cup B-A\cap B)=0$. By $\big{|}\widetilde{\mathscr{X}}_\mu\big{|}$ we denote the power of the set $\widetilde{\mathscr{X}}_\mu$ and let $\eta$ be the initial ordinal corresponding to the power $2^{\big{|}\widetilde{\mathscr{X}}_\mu\big{|}}$. Now for any ordinal $\theta<\eta$, since the extension
$\pi_{\theta+1,\theta}\colon\X_{\theta+1}\rightarrow\X_\theta$ is nontrivial, hence we can find a point
$$\tilde{x}_\theta\in\widetilde{\pi_{\theta+1}^{-1}[\mathscr{X}_{\theta+1}]}_\mu-\widetilde{\pi_{\theta}^{-1}[\mathscr{X}_{\theta}]}_\mu\subsetneq\widetilde{\mathscr{X}}_\mu.$$
From that $\tilde{x}_\xi\not=\tilde{x}_\theta$ for all $\xi\not=\theta$, it follows that the power $\left|\{\tilde{x}_\theta;\theta<\eta\}\right|=2^{\big{|}\widetilde{\mathscr{X}}_\mu\big{|}}>\big{|}\widetilde{\mathscr{X}}_\mu\big{|}$. However, this yields a contradiction to that $\{\tilde{x}_\theta; \theta<\eta\}\subset\widetilde{\mathscr{X}}_\mu$.

The proof of Theorem~\ref{thm4.3} is thus completed.
\end{proof}

Recall that an ordinal $\theta$ is said to be isolated if it is not a limit ordinal; in other word, we have the ordinal $\theta-1$. The following is a simple observation.

\begin{Lem}\label{lem4.4}%%%
Let $\varGamma$ be a submodule of an \textit{lcsc} $R$-module $G$ and $\X$ a standard Borel $G$-space. If there is an isolated ordinal $\theta$ and a system of $G$-factors $\{\pi_\xi\colon\X\rightarrow\X_\xi\}_{\xi\le\theta}$ such that
\begin{enumerate}
\item[$(1)$] for each ordinal $\xi<\theta$ there is a factor $G$-map $\pi_{\theta,\xi}\colon\X_\theta\rightarrow\X_\xi$ with $\pi_\xi=\pi_{\theta,\xi}\circ\pi_\theta$, and
\item[$(2)$] $\pi_{\theta,\theta-1}\X_\theta\rightarrow\X_{\theta-1}$ is relatively $\mathbf{C}_4$-compact for $\varGamma$;
\end{enumerate}
then $\pi_{\xi+1,\xi}\X_{\xi+1}\rightarrow\X_\xi$ is relatively $\mathbf{C}_4$-compact for $\varGamma$ for each ordinal $\xi<\theta$.
\end{Lem}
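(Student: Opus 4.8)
The plan is to verify, for each $\xi<\theta$, one of the equivalent conditions $\mathbf{C}_1\sim\mathbf{C}_6$ of Theorem~\ref{thm2.4} for the link $\pi_{\xi+1,\xi}\colon\X_{\xi+1}\to\X_\xi$ — most conveniently $\mathbf{C}_4$ — by propagating downwards the hypothesised $\mathbf{C}_4$-compactness of the top link $\pi_{\theta,\theta-1}$. I regard $\pi_{\xi+1,\xi}$ through the short factors series $\X\xrightarrow{\textit{Id}_X}\X_{\xi+1}\xrightarrow{\pi_{\xi+1,\xi}}\X_\xi$, so FK-almost-almost-periodicity refers to the disintegration of $\mu$ over $\X_\xi$; I write $\nu_\xi=\mu|_{\mathscr{X}_\xi}$, $\nu_{\theta-1}=\mu|_{\mathscr{X}_{\theta-1}}$, with $\{\mu^{(\xi)}_y\}$, $\{\mu^{(\theta-1)}_z\}$ the corresponding conditional measures. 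Along the chain the $\sigma$-algebras are nested, $\mathscr{X}_\xi\subseteq\mathscr{X}_{\xi+1}\subseteq\mathscr{X}_{\theta-1}\subseteq\mathscr{X}_\theta$ ($\mu$-mod~$0$), so $\mathfrak{L}^2(X,\mathscr{X}_{\xi+1},\mu)\subseteq\mathfrak{L}^2(X,\mathscr{X}_\theta,\mu)$; hence, by hypothesis~(2) and $\mathbf{C}_4$ for $\pi_{\theta,\theta-1}$, every bounded $\varphi\in\mathfrak{L}^2(X,\mathscr{X}_{\xi+1},\mu)$ is FK a.a.p.\ for $\varGamma$ over $\X_{\theta-1}$. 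For $\xi=\theta-1$ this is already the assertion, so the content is the case $\xi<\theta-1$, where one must carry FK-almost-almost-periodicity from the base $\X_{\theta-1}$ to the coarser base $\X_\xi$.

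The tool for that passage is that $\mathscr{X}_\xi\subseteq\mathscr{X}_{\theta-1}$ makes the disintegration over $\X_\xi$ factor through the one over $\X_{\theta-1}$: writing $\kappa_y$ for the conditional law of the $\X_{\theta-1}$-coordinate given $\mathscr{X}_\xi$ at $y$, one has $\mu^{(\xi)}_y=\int\mu^{(\theta-1)}_z\,d\kappa_y(z)$, so $\|f\|_{2,\mu^{(\xi)}_y}^2=\int\|f\|_{2,\mu^{(\theta-1)}_z}^2\,d\kappa_y(z)$ for bounded $\mathscr{X}$-functions $f$. Fix a bounded $\varphi\in\mathfrak{L}^2(X,\mathscr{X}_{\xi+1},\mu)$ and $\delta,\varepsilon>0$, and take $\delta',\varepsilon'>0$ small in terms of $\delta,\varepsilon,\|\varphi\|_\infty$ together with a $(\delta',\varepsilon')$-spanning set $\phi_1,\dots,\phi_k\in\mathscr{H}$ for $\varphi$ relative to $\pi_{\theta,\theta-1}$ (as in the proof of Theorem~\ref{thm2.4}), truncated so that $\|\phi_i\|_\infty\le\|\varphi\|_\infty$; thus for every $g\in\varGamma$ the set $E_g=\{z:\min_i\|U_g\varphi-\phi_i\|_{2,\mu^{(\theta-1)}_z}\ge\varepsilon'\}$ has $\nu_{\theta-1}(E_g)<\delta'$. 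Letting $\Psi_g$ be the fibrewise-best piecing of the $\phi_i$ relative to $\X_{\theta-1}$ (so $\|\Psi_g\|_\infty\le\|\varphi\|_\infty$) and splitting the $\X_\xi$-fibre over $E_g$ and its complement, the displayed identity gives $\|U_g\varphi-\Psi_g\|_{2,\mu^{(\xi)}_y}^2\le(\varepsilon')^{2}+4\|\varphi\|_\infty^2\,\kappa_y(E_g)$; since $\int\kappa_y(E_g)\,d\nu_\xi(y)=\nu_{\theta-1}(E_g)<\delta'$, Markov's inequality makes $\|U_g\varphi-\Psi_g\|_{2,\mu^{(\xi)}_y}<\varepsilon$ off a $\nu_\xi$-set of measure $<\delta/2$. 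It then remains to replace the $g$-indexed functions $\Psi_g$ by one finite spanning family valid for all of $\varGamma$ and, via the closedness of $L_{\textit{FKaap}}^2(X,\mathscr{X}_{\xi+1},\mu,G\mathrm{:}\varGamma)$ (Lemma~\ref{lem2.3}), to pass from bounded $\varphi$ to all of $\mathfrak{L}^2(X,\mathscr{X}_{\xi+1},\mu)$, which gives $\mathbf{C}_4$ for $\pi_{\xi+1,\xi}$.

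The step I expect to be the main obstacle is the penultimate one: $\mathbf{C}_4$ demands a single finite spanning family valid for all $g\in\varGamma$ at once, whereas the natural approximants $\Psi_g$ depend on $g$. The mechanism that should close this gap is that all the $\Psi_g$ are uniformly bounded and assembled from the single fixed list $\phi_1,\dots,\phi_k$, so that the $\|\cdot\|_{2,\mu^{(\xi)}_y}$-distance between $\Psi_g$ and $\Psi_{g'}$ is controlled only by the $\X_{\theta-1}$-fibre mass of the set where their argmin-indices disagree; combined with the norm-continuity of $g\mapsto U_g\varphi$ on the separable space $\mathscr{H}$, this makes $g\mapsto\Psi_g$ essentially norm-continuous, so that finitely many of the $\Psi_g$ — say $\psi_1,\dots,\psi_m\in\mathscr{H}$ — suffice up to an extra $\nu_\xi$-exceptional set of measure $<\delta/2$, which is exactly the data demanded by $\mathbf{C}_4$.
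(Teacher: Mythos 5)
You have put your finger on exactly the step that carries the real content of this lemma: the hypothesis only yields that every $\varphi\in\mathfrak{L}^2(X,\mathscr{X}_{\xi+1},\mu)\subseteq\mathfrak{L}^2(X,\mathscr{X}_\theta,\mu)$ is \textit{FK a.a.p.} for $\varGamma$ with respect to the disintegration over the \textit{finer} base $\X_{\theta-1}$, while the conclusion asks for the same property over the \textit{coarser} base $\X_\xi$. (The paper's own proof consists precisely of the observation that $\varphi\in L_{\textit{FKaap}}^2(X,\mathscr{X}_\theta,G\mathrm{:}\varGamma)$ and stops there, so the change of base you are attempting is exactly what is left implicit.) Your tower-of-disintegrations identity and the Markov argument producing the $g$-dependent piecings $\Psi_g$ are correct as far as they go.

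The final step, however, is a genuine gap and cannot be repaired from hypotheses $(1)$--$(2)$ alone. Norm-continuity of $g\mapsto\Psi_g$ does not produce a finite subfamily when $\varGamma$ is noncompact (continuity is not total boundedness), and the implication you are trying to establish is in fact false in this generality: take $\theta=2$, $\xi=0$, $\X=\X_2=W\times K$ with $W$ weakly mixing, $K$ a nontrivial compact group rotation, $\X_1=W$ and $\X_0$ trivial, $\varGamma=G=\mathbb{Z}$. Then $\pi_{2,1}$ is relatively $\mathbf{C}_4$-compact for $\varGamma$ (on each $\X_1$-fibre one spans with finitely many scalar multiples of characters of $K$, and one concludes by Lemma~\ref{lem2.3}), yet a nonconstant $\varphi\in\mathfrak{L}^2(X,\mathscr{X}_1,\mu)$ is constant on every $\X_1$-fibre --- hence trivially a.a.p.\ over $\X_{\theta-1}=\X_1$ with constants as spanning set --- while $\mathbf{C}_4$ over the trivial base $\X_0$ forces total boundedness of the orbit $\{U_g\varphi\}_{g\in\varGamma}$ in $\|\cdot\|_{2,\mu}$, which weak mixing excludes. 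In this example your functions $\Psi_g$ are, up to $\varepsilon'$, the orbit functions $U_g\varphi$ themselves, so the family $\{\Psi_g\}_{g\in\varGamma}$ is not totally bounded and no finite list $\psi_1,\dotsc,\psi_m$ can serve all $g$ simultaneously. Consequently the fine-to-coarse passage is not a formal consequence of $(1)$--$(2)$: a correct argument must invoke additional information about how the factor chain is actually produced (for instance the maximality of the compact submodule/factor built into Theorems~\ref{thm4.2} and~\ref{thm4.3}), or else the compactness hypothesis must be taken relative to the base $\X_\xi$ itself rather than $\X_{\theta-1}$ --- and this caveat applies equally to the one-line argument given in the paper.
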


\begin{proof}
The statement follows from the following factors series:
\begin{equation*}
\begin{CD}
\X_\theta@>{\pi_{\theta,\theta-1}}>>\X_{\theta-1}@>{\pi_{\theta-1,\xi+1}}>>\X_{\xi+1}@>{\pi_{\xi+1,\xi}}>>\X_\xi.
\end{CD}
\end{equation*}
Given any $\varphi\in\mathfrak{L}^2(X,\mathscr{X}_{\xi+1},\mu)$, by the definition of $\mathbf{C}_4$, it follows that $\varphi$ belongs to $L_{\textit{FKaap}}^2(X,\mathscr{X}_\theta,G:\varGamma)$. This proves the lemma.
\end{proof}

The following is a special case of Theorem~0.2.

\begin{Thm}[Maximal Distal Factor]\label{thm4.5}%%%
Let $G$ be an \textit{lcsc} Noetherian $R$-module of rank $1$ over a syndetic ring $R$ and $\X$ a standard Borel $G$-space. Then there exists an ordinal $\eta$ and a system of factors $\{\pi_\xi\colon\X\rightarrow\X_\xi\}_{\xi\le\eta}$ such that
\begin{enumerate}
\item[$(a)$] $\pi_\eta\colon \X\rightarrow\X_\eta$ is totally relatively weak-mixing for $G$ or $\X_\eta=\X$ ($\mu$-$\mathrm{mod}$ $0$).
\item[$(b)$] $\X_0\approx(X,\mathscr{X}_0,\mu,G)$ where $\mathscr{X}_0=\{\varnothing,X\}$.
\item[$(c)$] For each pair of ordinals $\theta,\xi$ with $0\le\theta<\xi\le\eta$, there is a factor $G$-map $\pi_{\xi,\theta}\colon\X_\xi\rightarrow\X_\theta$ such that $\pi_\theta=\pi_{\xi,\theta}\circ\pi_\xi$.
\item[$(d)$] For each ordinal $\xi<\eta$, $\pi_{\xi+1,\xi}\colon\X_{\xi+1}\rightarrow\X_\xi$ is a nontrivial relatively compact extension for $G$.
\item[$(e)$] If $\xi$ is a limit ordinal $\le\eta$, then $\X_\xi=\underleftarrow{\lim}_{\theta<\xi}\X_\theta$.
\end{enumerate}
\end{Thm}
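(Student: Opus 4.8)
The plan is to re-run, almost verbatim, the transfinite construction in the proof of Theorem~\ref{thm4.3}, with the single extra requirement that every successor link be relatively compact \emph{for the whole of $G$} rather than merely primitive. The rank-$1$ hypothesis is exactly what makes this sharpening legitimate: it rules out the ``chaotically primitive'' alternative $(c)$ of Definition~\ref{def4.1}.

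First I would set $\X_0=(X,\{\varnothing,X\},\mu,G)$ with $\pi_0$ the trivial factor map, and build $\{\pi_\xi\colon\X\to\X_\xi=(X,\mathscr{X}_\xi,\mu,G)\}$ by transfinite recursion, realising every $\X_\xi$ with $\xi\ge1$ on $X$ via $\textit{Id}_X$ (so $\{\varnothing,X\}=\mathscr{X}_0\subseteq\mathscr{X}_\theta\subseteq\mathscr{X}_\xi$ for $\theta\le\xi$), disintegration over each $\X_\xi$ being available since $\X$ is standard. At a successor stage, given $\pi_\xi$: if $\pi_\xi\colon\X\to\X_\xi$ is totally relatively weak-mixing for $G$, or $\X_\xi=\X$ ($\mu$-mod $0$), I stop and set $\eta=\xi$; otherwise I apply the dichotomy theorem, Theorem~\ref{thm4.2}, to the nontrivial extension $\pi_\xi\colon\X\to\X_\xi$, obtaining a short factors series $\X\xrightarrow[]{\textit{Id}_X}\X_{\xi+1}\xrightarrow[]{\pi_{\xi+1,\xi}=\textit{Id}_X}\X_\xi$ in which $\pi_{\xi+1,\xi}$ is a nontrivial primitive extension which, since $\pi_\xi\colon\X\to\X_\xi$ is not totally relatively weak-mixing for $G$, is of type $(b)$ or $(c)$ of Definition~\ref{def4.1} (with $\X_{\xi+1}=\X$, a type-$(b)$ link, in the degenerate case that $\pi_\xi\colon\X\to\X_\xi$ is already relatively compact for $G$). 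At a limit stage $\xi$ I put $\X_\xi=\underleftarrow{\lim}_{\theta<\xi}\X_\theta$, i.e. $\mathscr{X}_\xi=\sigma\big(\bigcup_{\theta<\xi}\mathscr{X}_\theta\big)$ ($\mu$-mod $0$), with the obvious consistent factor maps, and again stop or continue by the same criterion. Since every successor link is nontrivial, the classes $\widetilde{\mathscr{X}_\xi}_\mu$ strictly increase inside the Boolean $\sigma$-algebra $\widetilde{\mathscr{X}}_\mu$, so the process must halt before the initial ordinal of cardinality $2^{|\widetilde{\mathscr{X}}_\mu|}$ — by the counting argument that closes the proof of Theorem~\ref{thm4.3} — producing the ordinal $\eta$. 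Then $(a)$ holds by the stopping rule, while $(b)$, $(c)$ and $(e)$ are immediate from the construction.

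What remains — and what I expect to be the real obstacle — is $(d)$: that the primitive link $\pi_{\xi+1,\xi}$ handed back by the dichotomy is in fact relatively compact for all of $G$, i.e. that alternative $(c)$ of Definition~\ref{def4.1} never occurs. Suppose it did, say $G=G_{rc}\times G_{rw}$ with both factors nontrivial, $\pi_{\xi+1,\xi}$ relatively compact for $G_{rc}$ and totally relatively weak-mixing for $G_{rw}$. A rank-$1$ module cannot contain two nonzero direct summands each with an element of infinite $R$-order, so one of $G_{rc},G_{rw}$ — in the essential case $G_{rw}$ — contains a nonzero $R$-torsion element $g$, say $a_0g=I$ with $a_0\not=0$. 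Since $R$ is syndetic there is a compact $K\subseteq R$ with $K+Ra_0=R$, whence $\langle g\rangle_R=Rg=(K+Ra_0)g=Kg$ is a \emph{compact} submodule of $G$; by $\mathfrak{L}^2$-continuity of $g'\mapsto U_{g'}$ every $\mathfrak{L}^2$-function then has precompact $\langle g\rangle_R$-orbit, so by Lemma~\ref{lem2.1} and condition $\mathbf{C}_4$ of Theorem~\ref{thm2.4} the extension $\pi_{\xi+1,\xi}$ is relatively compact for $\langle g\rangle_R$. But $\pi_{\xi+1,\xi}$ is also relatively weak-mixing for $g$, hence jointly relatively weak-mixing for $\langle g\rangle_R$; and relative $\mathbf{C}_1$-compactness together with joint relative weak-mixing for a nontrivial $R$-submodule force an extension to be trivial — this is precisely the argument opening the proof of Theorem~\ref{thm3.4} — contradicting the non-triviality of $\pi_{\xi+1,\xi}$. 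The one remaining configuration, $G_{rw}$ torsion-free with $G_{rc}$ a nontrivial torsion direct summand, is excluded by the rank-$1$ hypothesis (in the reading under which $\mathbb{R},\mathbb{Z},\mathbb{Z}_p,\mathbb{Q}_p$ are of rank $1$); otherwise such a $G_{rc}$ would be absorbed into the relatively compact part exactly as in the syndeticity step of the proof of Theorem~\ref{thm4.2} using Lemma~\ref{lem2.8}. Thus $\pi_{\xi+1,\xi}$ is relatively compact for $G$, which is $(d)$; everything else is a transcription of the proofs of Theorems~\ref{thm4.2} and \ref{thm4.3}.
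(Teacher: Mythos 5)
Your main line is sound and is, in substance, what the paper itself does: its proof of this theorem is a one-line appeal to the transfinite machinery of Theorem~\ref{thm4.3} together with Theorem~\ref{thm2.4} (and Lemma~\ref{lem4.4}), and re-running that recursion with the dichotomy Theorem~\ref{thm4.2} at each successor stage, stopping exactly when $\pi_\xi\colon\X\rightarrow\X_\xi$ is totally relatively weak-mixing for $G$ or $\X_\xi=\X$, and terminating by the Boolean-$\sigma$-algebra cardinality count, is the right assembly; since you arrange relative compactness for all of $G$ at every successor link from the outset, you never need Lemma~\ref{lem4.4}, which is a reasonable simplification. Moreover, your treatment of the half of the ``no chaotic link'' claim that can actually be proved is a genuine supplement to the paper, which never says why ``rank $1$'' excludes alternative $(c)$ of Definition~\ref{def4.1}: if the weak-mixing summand $G_{rw}$ contained a torsion element $g$ with $a_0g=I$, $a_0\neq0$, then syndeticity gives $\langle g\rangle_R=Kg$ compact, hence every $\mathfrak{L}^2$-orbit under $\langle g\rangle_R$ is precompact, hence (Lemma~\ref{lem2.1}, Theorem~\ref{thm2.4}) the link is relatively compact for $\langle g\rangle_R$ while also jointly relatively weak-mixing for $\langle g\rangle_R$, forcing triviality by the opening argument of Theorem~\ref{thm3.4} --- correct, and not in the paper.

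The soft spot is your last configuration: $G_{rc}$ a nontrivial torsion direct summand, $G_{rw}$ torsion-free. The fallback you offer --- that such a $G_{rc}$ ``would be absorbed into the relatively compact part \ldots using Lemma~\ref{lem2.8}'' --- is not an argument: absorbing all torsion into $G_{rc}$ is exactly what the maximality step of Theorem~\ref{thm4.2} already does, and it does not make the link relatively compact for $G_{rw}$, so it produces no contradiction. Nor can any argument close this case if ``rank $1$'' is read as free rank: take $G=\mathbb{Z}/2\mathbb{Z}\times\mathbb{Z}$ (Noetherian over the syndetic ring $\mathbb{Z}$, free rank one) acting on $X=\{0,1\}\times\Omega$, two copies of a weak-mixing $\mathbb{Z}$-system, with $\mathbb{Z}/2\mathbb{Z}$ swapping the copies; one checks that every tower of nontrivial relatively-compact-for-$G$ extensions of the trivial factor stays inside the $\sigma$-algebra $\mathscr{A}$ generated by the first coordinate (weak mixing of the $\mathbb{Z}$-direction kills anything more), while $\X\rightarrow(X,\mathscr{A},\mu,G)$ is not relatively weak-mixing for the swap $s$ (a nonconstant function $F(x,x')$ of the $\Omega$-coordinates alone is $\langle s\rangle_{\mathbb{Z}}$-invariant on $X\times_{\mathscr{A}}X$ but is not a function on the base), so no ordinal $\eta$ satisfying $(a)$--$(e)$ exists. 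Hence this case must be excluded by the hypothesis itself: ``rank $1$'' has to be read, as you tentatively do, as torsion-free of rank one (equivalently, $G$ isomorphic to a nonzero submodule, or cyclic quotient without mixed torsion, of $R$), and you should fix that reading once, state it, and delete the Lemma~\ref{lem2.8} fallback rather than leaving two incompatible outs. With that reading in place your sub-cases (i)--(ii) do exclude Definition~\ref{def4.1}$(c)$, every successor link delivered by Theorem~\ref{thm4.2} is of type $(b)$, and the rest of your plan goes through as written.
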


\begin{proof}
Based on Lemma~\ref{lem4.4} and Theorems~\ref{thm4.3} and \ref{thm2.4}, we can conclude this theorem.
\end{proof}

\begin{cor}\label{cor4.6}%%%
Let $\mathbb{T}^2=\mathbb{R}^2/\mathbb{Z}^2$ be the $2$-dimensional torus and let $\mu$ be the standard Haar measure on $\mathbb{T}^2$. Assume $\mathbb{Z}$ acts $\mu$-preserving on $\mathbb{T}^2$ induced by $T\colon(x,y)\mapsto(x,y+x)$ and set $\X=(\mathbb{T}^2,\mathscr{B}_{\mathbb{T}^2},\mu,\mathbb{Z})$. Let
\begin{gather*}
\X\rightarrow\dotsm\rightarrow\X_{\theta+1}\xrightarrow[]{\pi_{\theta+1,\theta}}\X_\theta\rightarrow\dotsm\rightarrow\X_1\xrightarrow[]{\pi_{1,0}}\X_0
\end{gather*}
be the factors chain of $\X$ by Theorem~\ref{thm4.5}. Then there exists at least one intermediate link $\pi_{\theta+1,\theta}\colon\X_{\theta+1}\rightarrow\X_\theta$ which is relatively compact for $\mathbb{Z}$ but there is some $\varphi\in\mathfrak{L}^2(\mathbb{T}^2,\mathscr{X}_{\theta+1},\mu)$ with $\varphi\ge0$ a.e. and $\int_{\mathbb{T}^2}\varphi\,d\mu>0$ such that $\varphi$ is not \textit{a.p. for $\mathbb{Z}$} in the sense of Def.~\ref{def2.2}.
\end{cor}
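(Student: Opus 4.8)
The plan is to compute the Furstenberg chain of this skew product essentially explicitly and to locate the defect in its top, relatively compact, link. Work in the Fourier basis $\{e_{m,n}\}_{(m,n)\in\mathbb{Z}^2}$ of $\mathfrak{L}^2(\mathbb{T}^2,\mu)$, $e_{m,n}(x,y)=e^{2\pi i(mx+ny)}$. Since $T(x,y)=(x,y+x)$ one computes $U_T^ke_{m,n}=e_{m+kn,n}$; thus $T$ fixes every $e_{m,0}$ while moving $e_{0,1}$ through the infinite orthonormal family $\{e_{k,1}\}_{k\in\mathbb{Z}}$. Let $\mathscr{K}$ be the $\sigma$-algebra of sets depending only on $x$, equivalently the closed span of $\{e_{m,0}\}_m$; then $\mathbb{Z}$ acts trivially on $(\mathbb{T}^2,\mathscr{K},\mu,\mathbb{Z})$, the factor map is $(x,y)\mapsto x$, and the disintegration of $\mu$ over $\mathscr{K}$ is $\mu_x=\delta_x\otimes\mathrm{Leb}_{\mathbb{T}}$.

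First I would verify that $\X\to(\mathbb{T}^2,\mathscr{K},\mu,\mathbb{Z})$ is relatively compact for $\mathbb{Z}$, i.e.\ satisfies condition $\mathbf{C}_2$ of \S\ref{sec2.2}. On the fibre $\{x\}\times\mathbb{T}$ the function $U_T^ke_{m,n}$ equals the unimodular scalar $e^{2\pi i(m+kn)x}$ times the unit vector $e^{2\pi iny}$, so for a fixed $\varepsilon>0$ and an $\varepsilon$-net $\zeta_1,\dots,\zeta_N$ of the unit circle the functions $\phi_j(x,y)=\zeta_je^{2\pi iny}\in\mathfrak{L}^2(\mathbb{T}^2,\mu)$ satisfy $\min_{1\le j\le N}\|U_T^ke_{m,n}-\phi_j\|_{2,x}<\varepsilon$ for \emph{every} $x$ and every $k\in\mathbb{Z}$; hence each $e_{m,n}$ is \textit{FK a.p.\ for $\mathbb{Z}$} (with no exceptional $x$-set). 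Finite linear combinations of the $e_{m,n}$ are then \textit{FK a.p.\ for $\mathbb{Z}$} and dense in $\mathfrak{L}^2(\mathbb{T}^2,\mu)$, so $L_{\textit{FKap}}^2(\mathbb{T}^2,\mathscr{B}_{\mathbb{T}^2},\mu,\mathbb{Z}\colon\mathbb{Z})$ is dense, which is $\mathbf{C}_2$. (Similarly $(\mathbb{T}^2,\mathscr{K},\mu,\mathbb{Z})\to\X_0$ is trivially relatively compact, the $\mathbb{Z}$-orbit of an $x$-function being a single point.)

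Next I would show that the top factor of any chain furnished by Theorem~\ref{thm4.5} for this $\X$ is $\X$ itself, i.e.\ that the weak-mixing alternative in Theorem~\ref{thm4.5}(a) is excluded. If $\X_\eta\subsetneq\X$ then $\pi_\eta\colon\X\to\X_\eta$ is totally relatively weak-mixing for $\mathbb{Z}$, hence relatively weak-mixing for the generator $T$, and therefore also relatively ergodic for $T$ (apply relative weak-mixing to $f\otimes\bar f$ for a $\langle T\rangle_R$-invariant $f$); since every $\mathscr{K}$-set is $T$-invariant (as $T$ fixes $x$), relative ergodicity for $T$ forces $\mathscr{K}\subseteq\mathscr{X}_\eta$ ($\mu$-mod $0$), giving a tower $\X\to\X_\eta\to(\mathbb{T}^2,\mathscr{K},\mu,\mathbb{Z})$. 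Disintegrating $\mu_x$ further over $\X_\eta$ converts the exceptional-set-free \textit{FK a.p.\ for $\mathbb{Z}$} estimates relative to $\mathscr{K}$ into \textit{FK a.a.p.\ for $\mathbb{Z}$} estimates relative to $\X_\eta$, so $\mathbf{C}_4$, and hence $\mathbf{C}_2$ by Theorem~\ref{thm2.4}, holds for $\pi_\eta\colon\X\to\X_\eta$; but $\pi_\eta$ is also jointly relatively weak-mixing for $\mathbb{Z}$, so the argument proving part $(1)$ of Theorem~\ref{thm3.4} forces $\pi_\eta$ to be trivial, contradicting $\X_\eta\subsetneq\X$. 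Hence $\X_\eta=\X$ ($\mu$-mod $0$); moreover $\eta\ge2$, since $\X\to\X_0$ is not relatively compact for $\mathbb{Z}$ ($e_{0,1}$ has an infinite orthonormal orbit over the one-point base).

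Finally I would take $\theta=\eta-1$. Then $\pi_{\theta+1,\theta}\colon\X_\eta\to\X_{\eta-1}$ is relatively compact for $\mathbb{Z}$ by Theorem~\ref{thm4.5}(d) and $\mathscr{X}_{\theta+1}=\mathscr{X}_\eta=\mathscr{B}_{\mathbb{T}^2}$ ($\mu$-mod $0$), so it remains to exhibit a bad $\varphi\in\mathfrak{L}^2(\mathbb{T}^2,\mathscr{X}_{\theta+1},\mu)$. Take $\varphi(x,y)=1+\cos(2\pi y)$, so that $\varphi\ge0$ a.e.\ and $\int_{\mathbb{T}^2}\varphi\,d\mu=1>0$; then $U_T^k\varphi=1+\cos\bigl(2\pi(y+kx)\bigr)$ and $\cos\bigl(2\pi(y+kx)\bigr)=\tfrac12\bigl(e_{k,1}+e_{-k,-1}\bigr)$, so for $k\ne l$ the four modes $e_{k,1},e_{l,1},e_{-k,-1},e_{-l,-1}$ are pairwise distinct and $\|U_T^k\varphi-U_T^l\varphi\|_{2,\mu}^2=1$. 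Hence the $\mathbb{Z}$-orbit of $\varphi$ is an infinite subset of $\bigl(\mathfrak{L}^2(\mathbb{T}^2,\mathscr{X}_{\theta+1},\mu),\|\cdot\|_{2,\mu}\bigr)$ with all pairwise distances equal to $1$, so it is not precompact, i.e.\ $\varphi$ is not \textit{a.p.\ for $\mathbb{Z}$} in the sense of Definition~\ref{def2.2}. The only real obstacle here is the third paragraph — ruling out a nontrivial relatively weak-mixing top, equivalently verifying that $\X$ is distal; the remaining steps are routine Fourier bookkeeping.
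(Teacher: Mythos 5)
Your argument is essentially correct, but it is a genuinely different proof from the paper's. The paper settles the corollary indirectly in two lines: if no such link/function existed, the companion paper \cite{Dai-pre} would yield multiple Khintchine recurrence for $\X$, contradicting \cite[Theorems~1.3 and 2.1]{BHK}. You instead compute the tower directly: the Fourier identity $U_T^ke_{m,n}=e_{m+kn,n}$ shows the fiberwise (FK) compactness of $\X$ over the $x$-factor $\mathscr{K}$, the exclusion of a nontrivial weak-mixing top goes through (relative weak-mixing for $T$ does imply relative ergodicity for $T$, hence $\mathscr{K}\subseteq\mathscr{X}_\eta$ mod $0$; and your upgrade of the exceptional-set-free estimates over $\mathscr{K}$ to \textit{FK a.a.p.} over $\X_\eta$ is a Chebyshev argument on the iterated disintegration that should be written out, after which Theorem~\ref{thm2.4} and the first half of Theorem~\ref{thm3.4} give the contradiction), and $\varphi=1+\cos(2\pi y)$ indeed has all pairwise orbit distances equal to $1$, so it is not \textit{a.p.} in the sense of Def.~\ref{def2.2}. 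What your route buys is a self-contained proof inside this paper's toolkit together with an explicit identification of the defective link and of the witnessing function; what the paper's route buys is brevity and no need to analyze the chain at all, at the price of relying on \cite{Dai-pre} and on the Bergelson--Host--Kra counterexample.

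One loose end: taking $\theta=\eta-1$ presupposes that $\eta$ is a successor ordinal, which Theorem~\ref{thm4.5} does not guarantee. The fix stays within your own computation: your orbit estimate shows more generally that $f$ is \textit{a.p. for $\mathbb{Z}$} if and only if $f\in\mathfrak{L}^2(\mathbb{T}^2,\mathscr{K},\mu)$, since for $f\notin\mathfrak{L}^2(\mathscr{K})$ one has $\langle U_T^kf,U_T^lf\rangle\to\|E_\mu(f\mid\mathscr{K})\|_2^2$ as $|k-l|\to\infty$, so the orbit is not totally bounded. If every successor algebra $\mathscr{X}_{\theta+1}$ in the chain were contained in $\mathscr{K}$ mod $0$, transfinite induction through the limit stages would force $\mathscr{X}_\eta\subseteq\mathscr{K}$ mod $0$, contradicting $\mathscr{X}_\eta=\mathscr{B}_{\mathbb{T}^2}$ mod $0$ (which you have already established). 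Hence some successor link has $\mathscr{X}_{\theta+1}\not\subseteq\mathscr{K}$ mod $0$; it is relatively compact for $\mathbb{Z}$ by Theorem~\ref{thm4.5}(d), and $\varphi=1_A$ for any $A\in\mathscr{X}_{\theta+1}$ not mod-$0$ equal to a $\mathscr{K}$-set is a nonnegative witness with $\int\varphi\,d\mu>0$ that is not \textit{a.p. for $\mathbb{Z}$}. With this case added, your proof is complete.
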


\begin{proof}
Otherwise, by \cite{Dai-pre}, it follows that $\X$ would have the multiple Khintchine recurrence. However, this is a contradiction to \cite[Theorem~1.3 and Theorem~2.1]{BHK}.
\end{proof}

Similar to the proof of Theorem~\ref{thm4.3}, we can obtain the following another structure theorem.

\begin{Thm}[Structure Theorem~II]\label{thm4.7}%%%
Let $G$ be an \textit{lcsc} amenable group and $\X$ a nontrivial standard Borel $G$-space. Then there exists an ordinal $\eta$ and a system of factors $\{\pi_\xi\colon\X\rightarrow\X_\xi\}_{\xi\le\eta}$ such that
\begin{enumerate}
\item[$(a)$] $\pi_\eta\colon \X\rightarrow\X_\eta$ is jointly relatively weak-mixing for $G$ or $\X_\eta=\X$ ($\mu$-$\mathrm{mod}$ $0$).
\item[$(b)$] $\X_0\approx(X,\mathscr{X}_0,\mu,G)$ where $\mathscr{X}_0=\{\varnothing,X\}$.
\item[$(c)$] For each pair of ordinals $\theta,\xi$ with $\theta<\xi\le\eta$, there is a factor $G$-map $\pi_{\xi,\theta}\colon\X_\xi\rightarrow\X_\theta$ such that $\pi_\theta=\pi_{\xi,\theta}\circ\pi_\xi$.
\item[$(d)$] For each ordinal $\xi<\eta$, $\pi_{\xi+1,\xi}\colon\X_{\xi+1}\rightarrow\X_\xi$ is a nontrivial relatively compact extension for $G$ itself.
\item[$(e)$] If $\xi$ is a limit ordinal with $\xi\le\eta$, then $\X_\xi=\underleftarrow{\lim}_{\theta<\xi}\X_\theta$.
\end{enumerate}
\end{Thm}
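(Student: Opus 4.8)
The plan is to run the transfinite induction from the proof of Theorem~\ref{thm4.3}, with the Alternating Theorem~II (Theorem~\ref{thm3.5}) in the role there played by the Dichotomy Theorem~\ref{thm4.2}; this single substitution is what lets us discard the Noetherian-module and syndetic-ring hypotheses and keep only amenability of $G$. I would begin by putting $\X_0=(X,\mathscr{X}_0,\mu,G)$ with $\mathscr{X}_0=\{\varnothing,X\}$, the one-point $G$-system, and letting $\pi_0\colon\X\to\X_0$ be the trivial factor $G$-map; this already disposes of $(b)$.

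I would then define $\X_\theta$, $\pi_\theta$ and the intertwining maps $\pi_{\alpha,\xi}$ by transfinite recursion, checking at each stage whether $\pi_\theta\colon\X\to\X_\theta$ is trivial (equivalently $\X_\theta=\X$, $\mu$-mod $0$) or jointly relatively weak-mixing for $G$; if so, the recursion halts and I take $\eta=\theta$. Otherwise, at a successor ordinal $\theta+1$ I would apply Theorem~\ref{thm3.5} to the short factors series $\X\xrightarrow[]{\textit{Id}_X}\X\xrightarrow[]{\pi_\theta}\X_\theta$; since $\pi_\theta$ is assumed not jointly relatively weak-mixing for $G$, alternative~$(2)$ supplies an intermediate factor $\X_{\theta+1}=(X,\mathscr{X}_{\theta+1},\mu,G)$ with $\mathscr{X}_\theta\subseteq\mathscr{X}_{\theta+1}\subseteq\mathscr{X}$, $\mathscr{X}_{\theta+1}$ $G$-invariant, such that $\pi_{\theta+1,\theta}\colon\X_{\theta+1}\to\X_\theta$ (which is $\textit{Id}_X$) is a nontrivial relatively compact extension for $G$; I set $\pi_{\theta+1}=\textit{Id}_X$ and $\pi_{\theta+1,\xi}=\pi_{\theta,\xi}\circ\pi_{\theta+1,\theta}$ for $\xi<\theta$. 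At a limit ordinal $\theta$ I would set $\mathscr{X}_\theta=\sigma\big(\bigcup_{\xi<\theta}\mathscr{X}_\xi\big)$, a $G$-invariant $\sigma$-subalgebra of $\mathscr{X}$, $\X_\theta=(X,\mathscr{X}_\theta,\mu,G)=\underleftarrow{\lim}_{\xi<\theta}\X_\xi$, $\pi_\theta=\textit{Id}_X$, and the obvious $\pi_{\theta,\xi}$; this is legitimate just because every $\mathscr{X}_\xi$ lies inside the standard Borel $\sigma$-algebra $\mathscr{X}$. Conditions $(c)$, $(d)$, $(e)$ then hold by construction, and the halting condition is exactly $(a)$.

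It remains to see the recursion halts. If it did not, $\X_\theta$ would be defined for every ordinal $\theta$ with each link $\pi_{\theta+1,\theta}$ nontrivial; letting $\widetilde{\mathscr{X}}_\mu$ denote the measure algebra of $(X,\mathscr{X},\mu)$ and $\eta$ the initial ordinal of cardinality $2^{|\widetilde{\mathscr{X}}_\mu|}$, for each $\theta<\eta$ I would choose $\tilde{x}_\theta\in\widetilde{\pi_{\theta+1}^{-1}[\mathscr{X}_{\theta+1}]}_\mu\setminus\widetilde{\pi_\theta^{-1}[\mathscr{X}_\theta]}_\mu$; because the subalgebras $\widetilde{\pi_\theta^{-1}[\mathscr{X}_\theta]}_\mu$ increase with $\theta$, the $\tilde{x}_\theta$ are pairwise distinct, so $|\{\tilde{x}_\theta\colon\theta<\eta\}|=2^{|\widetilde{\mathscr{X}}_\mu|}>|\widetilde{\mathscr{X}}_\mu|$, contradicting $\{\tilde{x}_\theta\colon\theta<\eta\}\subseteq\widetilde{\mathscr{X}}_\mu$. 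Since the rest is the same bookkeeping as in Theorem~\ref{thm4.3}, the only point worth flagging is that Theorem~\ref{thm3.5} delivers an intermediate factor relatively compact for \emph{all} of $G$, which is precisely where amenability of the entire group gets used; the one mild technicality elsewhere is the limit stage, where one must note that the inverse limit is still a $G$-factor of the standard Borel $\X$ and that consecutive links genuinely enlarge the measure algebra, so that the cardinality count applies.
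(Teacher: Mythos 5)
Your proposal is correct and follows essentially the same route as the paper, whose proof of Theorem~\ref{thm4.7} is exactly the substitution you make: run the transfinite induction of Theorem~\ref{thm4.3} with Alternating Theorem~II (Theorem~\ref{thm3.5}) in place of the dichotomy Theorem~\ref{thm4.2}. Your spelled-out successor/limit stages and the measure-algebra cardinality halting argument simply make explicit what the paper leaves implicit.
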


\begin{proof}
The statement follows from Theorem~\ref{thm3.5} by making use of the transfinite induction as in the proof of Theorem~\ref{thm4.3}.
\end{proof}

We note here that if $G$ is a discrete countable abelian group, this is just another Structure Theorem \cite[Theorem~10.15]{Gla} using different approaches.

In addition, it should be noted that the jointly relatively weak-mixing for $G$ is essentially weaker than the totally relatively weak-mixing for $G$ as mentioned before. It turns out that the totally relatively weak-mixing, not the jointly relatively weak-mixing, is very important for proving the multiple recurrence theorem in our subsequent applications~\cite{Dai-pre, Dai3}.
%%%%%%%%%%%%%%%%%%%%%%%%%%%%%%%%%%%%%%%%%%%%%%%

%%%%%%%%%%%%%%%%%%%%%%%%%%%%%%%%%%%%%%%%%%%%%%%%%%%%%%%%%%%%%%%
%%%%%%%%%%%%%%%%%%%%%%%%%%%%%%%%%%%%%%%%%%%%%%%%%%%%%%%%%%%%%%%
\section*{\textbf{Acknowledgments}}%
%The author is deeply grateful to Professor Hillel Furstenberg for many helpful suggestions, comments, and carefully checking the details of the original manuscript.
%%Particularly he is grateful to the anonymous reviewers for their comments.
This work was partly supported by National Natural Science Foundation of China grant $\#$11271183 and PAPD of Jiangsu Higher Education Institutions.
%%%%%%%%%%%%%%%%%%%%%%%%%%%%%%%%%%%%%%%%%%%%%%%%%%%%%%%%%%%%%%%%%%%%%%%%%%%%%%%%%%%%%%

%\section*{\textbf{References}}

%\bibliographystyle{amsplain}

%%%%%%%%%%%%%%%%%%%%%%%%%%%%%%%%%%%%%%%%%%%%%%%%%%%%%%%%%
\end{document}